\documentclass[a4paper, 10pt, twoside, notitlepage]{amsart}
\setcounter{tocdepth}{1}
\usepackage[utf8]{inputenc}
\usepackage{xcolor}
\usepackage{amsmath} 
\usepackage{amssymb} 
\usepackage{amsthm}
\usepackage{graphicx}
\usepackage{esint}
\usepackage{subcaption}
\usepackage{amsmath, amssymb, graphics, setspace}

\usepackage{soul}

\newcommand{\mathsym}[1]{{}}
\newcommand{\unicode}[1]{{}}

\usepackage[colorlinks=true,linkcolor=blue]{hyperref}
\usepackage[export]{adjustbox}
\usepackage{thmtools}

\usepackage{enumitem}

\theoremstyle{plain}

\newtheorem{prop}{Proposition}[section]

\newtheorem{lem}[prop]{Lemma}
\newtheorem{cor}[prop]{Corollary}

\newtheorem{defi}[prop]{Definition}
\newtheorem{rmk}[prop]{Remark}

\newcommand {\R} {\mathbb{R}} \newcommand {\Z} {\mathbb{Z}}
 \newcommand {\N} {\mathbb{N}}
\newcommand {\C} {\mathbb{C}} 
\newcommand {\p} {\partial}

\newcommand {\diam} {\text{diam}}

\newcommand{\beq}[0]{\begin{equation}}
\newcommand{\eeq}[0]{\end{equation}}

\DeclareMathOperator{\diag}{diag}
\DeclareMathOperator{\Id}{Id}
\DeclareMathOperator{\tr}{tr}

\DeclareMathOperator {\dist} {dist}

\DeclareMathOperator {\inte} {int}

\pagestyle{headings}

\begin{document}

\title[From elastic crystals to nematic elastomers]{Exact constructions in the (non-linear) planar theory of elasticity: From elastic crystals to nematic elastomers}
\author[P. Cesana]{Pierluigi Cesana}

\address{Institute of Mathematics for Industry, Kyushu University, Fukuoka, Japan
and
Department of Mathematics and Statistics, La Trobe University, Australia}
\email{cesana@math.kyushu-u.ac.jp}

\author[F. Della Porta]{Francesco Della Porta}
\address{Max-Planck-Institute for Mathematics in the Sciences, Inselstr. 22, 04103 Leipzig}
\email{DellaPorta@mis.mpg.de}

\author[A. Rüland]{Angkana Rüland}
\address{Max-Planck-Institute for Mathematics in the Sciences, Inselstr. 22, 04103 Leipzig}
\email{rueland@mis.mpg.de}

\author[C. Zillinger]{Christian Zillinger}
\address{Department of Mathematics
University of Southern California
3620 S. Vermont Avenue
Los Angeles, CA 90089-2532 }
\email{zillinge@usc.edu}

\author[B. Zwicknagl]{Barbara Zwicknagl}
\address{Technische Universität Berlin
Institut für Mathematik,
Sekretariat MA 6-4,
Straße des 17. Juni 136,
10623 Berlin}
\email{zwicknagl@math.tu-berlin.de}

\begin{abstract}
In this article we deduce necessary and sufficient conditions for the presence of ``Conti-type'', highly symmetric, exactly-stress free constructions in the geometrically non-linear, planar $n$-well problem, generalising results of \cite{CKZ17}. Passing to the limit $n\rightarrow \infty$, this allows us to treat solid crystals and nematic elastomer differential inclusions simultaneously. In particular, we recover and generalise (non-linear) planar tripole star type deformations which were experimentally observed in \cite{MA80,MA80a,KK91}. Further we discuss the corresponding geometrically linearised problem.
\end{abstract}

\maketitle

\section{Introduction}

It is the purpose of this article to discuss certain specific, stress-free constructions for two-dimensional models of shape-memory alloys and nematic liquid crystal elastomers in a unified mathematical framework. Both of these physical systems can be described by highly non-quasi-convex energies  within the calculus of variations, which formally share important features and give rise to complex and wild microstructures. Before turning to our mathematical results, let us thus first describe the physical background of these models, discussing their common features and the problems we are interested in.

\subsection{Elastic crystals}
\label{sec:elastic}

Shape-memory alloys are solid, elastic crystals which undergo a first order, diffusionless solid-solid phase transformation in which symmetry is reduced upon the passage from the high temperature phase, \emph{austenite}, to the low temperature phase, \emph{martensite}. Due to the loss of symmetry there are typically various, energetically equivalent \emph{variants of martensite} in the low temperature phase. Mathematically, shape memory alloys have been very successfully modelled within a variational framework introduced by Ball and James \cite{B3}, where it is assumed that the observed deformations of a material minimise an energy functional of the form
\begin{align}
\label{eq:energy}
\int\limits_{\Omega} W(\nabla u, \theta) dx.
\end{align}
Here $\Omega$ denotes the reference configuration, which is typically chosen to be the material in the austenite phase at a fixed temperature, $u:\Omega \subset \R^d \rightarrow \R^d$ is the deformation of the material, $\theta: \Omega \rightarrow (0,\infty)$ represents temperature and $W: \R^{d\times d} \times \R_+ \rightarrow \R_+$ is the stored energy density. Physical requirements on the stored energy density are
\begin{itemize}
\item \emph{frame indifference}, which implies that 
\begin{align*}
W(Q F) = W(F) \mbox{ for all } Q \in SO(d),
\end{align*}
\item \emph{invariance under material symmetries}, by which
\begin{align*}
W(F H) = W(F) \mbox{ for all } H \in \mathcal{P}.
\end{align*} 
Here $\mathcal{P}$ denotes the point group of austenite, which is a (discrete) subgroup of $O(d)$.
\end{itemize}
These two conditions render the described models for martensitic phase transformations highly \emph{non-linear}, \emph{non-quasi-convex} and give rise to rich microstructures \cite{B}. 
The above two conditions on $W$ in particular determine the associated \emph{energy wells} $K(\theta)$, which are characterised by the condition 
\begin{align*}
W(F,\theta) = 0 \mbox{ iff } F \in K(\theta).
\end{align*}
Typically, $K(\theta)$ is of the form
\begin{align}
\label{eq:n_wells}
\begin{split}
K(\theta) = 
\left\{
\begin{array}{ll}
\alpha(\theta) SO(d) \mbox{ for } \theta > \theta_c,\\
\alpha(\theta) SO(d)\cup \bigcup\limits_{j=1}^{m}SO(d)U_j(\theta) \mbox{ for } \theta= \theta_c,\\
\bigcup\limits_{j=1}^{m}SO(d)U_j(\theta) \mbox{ for } \theta< \theta_c,
\end{array} 
\right.
\end{split}
\end{align}
where $\theta_c \in (0,\infty)$ denotes the transformation temperature, $\alpha(\theta):(0,\infty)\rightarrow \R_+$ is a thermal expansion coefficient, $\alpha(\theta)SO(d)$ models the austenite phase (taken as the reference configuration at the critical temperature, i.e. $\alpha(\theta_c)=1$) and $SO(d)U_j(\theta)$ represents the respective variants of martensite, where $U_j(\theta)\in \R^{d\times d}$, see \cite{Ball:ESOMAT}.
Here the matrices $U_j(\theta)$ are obtained through the action of the symmetry group from $U_1(\theta)$, i.e. for each $j\in\{1,\dots,m\}$ there exists $P\in \mathcal{P}$ such that
\begin{align*}
U_j(\theta) = P U_1(\theta)P^T.
\end{align*}
Due to the complicated and highly non-linear and non-convex structure of the energies in \eqref{eq:energy}, a commonly used first step towards the analysis of low energy microstructures in martensitic phase transformations is the analysis of the differential inclusion 
\begin{align}
\label{eq:ex_stress_free}
\nabla u \in K(\theta),
\end{align}
which corresponds to the determination of \emph{exactly stress-free} states. A class of particularly symmetric, exactly stress-free deformations had been studied by Conti \cite{C} in specific set-ups (we will also refer to these as ``Conti constructions''), see also the precursors in \cite{MS,CT05}. It is the purpose of this article to study these structures systematically in the sequel, following and extending ideas from \cite{CKZ17} and treating elastic and nematic liquid crystal elastomers in a unified framework.

\subsection{From elastic crystals to nematic elastomers}

Nematic liquid crystal elastomers (NLCEs) are a class of \textit{soft} shape-memory alloys where shape-recovery is accompanied by the emergence of soft modes and mechanical and optical instabilities. Constitutively, NLCEs are rubber-like elastic materials composed of cross-linked polymeric chains incorporating molecules of a nematic liquid crystal. We refer to \cite{WT03} for an extensive description of the synthesis and physical properties of NLCEs. The complicated interaction between orientation of the liquid crystal molecules (described by $\hat{n}(x)$, a unit vector field called the director) and the macroscopic strain field generated by the polymeric chains may induce optical isotropy, low-order states of the nematic molecules and shear-banding of martensitic type. As a typical signature of the nematic-elastic coupling, NLCEs spontaneously deform when an assigned orientation is imposed (for instance, by an external electric field) to the liquid crystal molecules. Conversely, a macroscopic deformation induces a rotation and re-orientation of the nematic molecules in a way that the director tends to be parallel to the direction of the largest principal stretch associated with the deformation.

Let us comment on the passage from solid to nematic liquid crystal elastomers.
Despite the profound differences in the nature of elastic crystals (martensite) and nematic-elastomers
it turns out that the morphology of the microstructures observed in both these materials  may   be modelled with the language of continuum mechanics by means of multi-well energies of a similar -- at least formally -- structure and shape yielding in both cases highly non-quasi-convex variational problems.

In the context of NLCE typical stored energy densities   may be considered in the general form 
\cite{ADMDS15}
\begin{align}\label{1902051753} 
W(F):= \sum\limits_{j=1}^N \frac{d_j}{\gamma_j} \left[ 
\left( \frac{\lambda_1(F)}{c_1} \right)^{\gamma_j} + \left( \frac{\lambda_2(F)}{c_2}\right)^{\gamma_j} +\left( \frac{\lambda_3(F)}{c_3}\right)^{\gamma_j}  -3
\right], \ \textrm{if } \det(F)=1,
\end{align}
and $+\infty$, if $\det F\neq 1$.
The matrix $F\in \R^{3\times 3}$ denotes the deformation gradient of the material and $\lambda_k(F)$ are its ordered singular values, that is, the square root of the eigenvalues of the matrix $FF^T$, under the assumption $0<\lambda_1\le \lambda_2\le\lambda_3$. 
Finally, $0< c_1 \leq  c_2 \leq c_3 < \infty$ as well as 
$d_j$ and $\gamma_j\in [2,\infty)$  
 are constants.

Stored energy densities of the form 
(\ref{1902051753})    
comprise the classical energy model for NLCEs of Bladon, Warner and Terentjev (BWT) \cite{BWT94} which is obtained by setting 
$N=1,\gamma_j=2,d_j=\mu$ (shear modulus)
and $c_1=c_2=r^{-1/6}, c_3=r^{1/3}$ 
(where $r>1$ is the backbone anisotropy parameter) into (\ref{1902051753}).
By operating this substitution 
we obtain the BWT energy density which we write -- with some abuse of notation -- as
\begin{eqnarray}\label{1901311846}
W(F)=\frac{\mu}{2}\left[ r^{1/3}\lambda_1^2(F) +r^{1/3} \lambda_2^2(F) +\frac{\lambda_3^2(F)}{r^{2/3} } -3\right].
\end{eqnarray}
Moreover, $W(F)=\min_{\hat{n}\in\mathbb{S}^2}\tilde{W}(F,\hat{n})$, 
where
\begin{align}\label{1901311844}
\tilde{W}(F, \hat{n})=\frac{\mu}{2}\left(r^{1/3}\left[\tr(FF^T)-\frac{r-1}{r}FF^T \hat{n}\cdot \hat{n} \right]-3\right)
\quad \textrm{ if } \det(F)=1, \hat{n}\in\mathbb{S}^2
\end{align}
(and extended to $+\infty$ if $\det(F)\neq 1$ or $\hat{n}\notin\mathbb{S}^2$) and $\hat{n}$ is the nematic director.
Notice in (\ref{1901311844}) the energy density is constant if we replace $\hat{n}$ with $-\hat{n}$: this is the so-called head-tail symmetry of nematic liquid crystals, a fundamental physical property which is incorporated in all the most typical models of both nematic liquid and solid-liquid crystals including the ones discussed here.

Similarly as in the elastic crystal setting in shape-memory materials, in studying minimisers of \eqref{1902051753} or \eqref{1901311846} a first commonly used approach is to consider the associated differential inclusion describing exactly stress-free states. In the case of \eqref{1902051753} this leads to the study of the following problem:
\begin{align}
\label{eq:K_liq}
\nabla u \in K_{\infty}:= \{F\in \R^{3\times 3}; \ \det(F) =1, \lambda_k(F) = c_k, k \in \{1,2,3\}\},
\end{align}
where $K_{\infty}$ corresponds to the zero-energy level of   $W$.
Observe that $W\ge 0$ and  $W(F)=0$ if and only if $F \in K_{\infty}$.
In contrast to the finite number of wells in the elastic crystal case, one is now confronted with an infinite number of energy wells. 

This is evident if we investigate the zero-energy level of $\tilde{W}(F,\hat{n})$. Simple algebraic computations show that  
$
 \min_{F,\hat{n}}\tilde{W}(F,\hat{n})=0
$
and that the minimum is achieved by any pair $(\overline{F},\overline{n})$ such that $\lambda_1=\lambda_2=r^{-1/6}, \lambda_3=r^{1/3}$ and $\overline{n}$ coincides with the eigenvector associated with the largest eigenvalue of $\overline{F}\overline{F}^T$ or, equivalently, by any pair $(U_{\hat{n}},\hat{n})$ where $\hat{n}$ is any vector in $\mathbb{S}^2$ and 
\begin{eqnarray}\label{1901312036}
U_{\hat{n}}=r^{1/3}\hat{n}\otimes \hat{n}+r^{-1/6}(Id-\hat{n}\otimes \hat{n}),
\end{eqnarray}
where $Id \in \R^{3\times 3}$ is the identity matrix. Deformations of the form stated in equation  (\ref{1901312036}), which are the equivalent of the bain strain in martensite, correspond to a spontaneous distortion of a ball of radius one into a prolate ellipsoid whose major axis (of length $r^{1/3}$) is parallel to $\hat{n}$. 
 For the NLCE model of (\ref{1901311846}) the energy well is obtained by plugging 
$c_1=c_2=r^{-1/6}, c_3=r^{1/3}$
 into $K_{\infty}$ (see \eqref{eq:K_liq}) which leads to the differential inclusion
\begin{eqnarray}\label{1902010046}
\nabla u \in \tilde{K}_{\infty}(r)=
\bigcup_{\hat{n}\in\mathbb{S}^2}SO(3)U_{\hat{n}}.
\end{eqnarray}
Equation (\ref{1902010046}) is resemblant of the situation described by the equations
\eqref{eq:n_wells}-\eqref{eq:ex_stress_free} for martensite, where one has replaced $\mathcal{P}$, the (discrete) point group of the material with the full group $SO(3)$. This is indeed the striking property of NLCE models: The stored energy is invariant under rotations in the ambient space as well as under the action of $SO(3)$.

This formal similarity of the two problems suggests that they can be analysed in similar frameworks. In Lemma \ref{lem:diff_incl_n_infty} we show that the set \eqref{eq:K_liq} can be obtained as the limit $n\rightarrow \infty$ of sets of the type \eqref{eq:n_wells}. Moreover, even for finite $n \in \N$ the sets from \eqref{eq:n_wells} are always subsets of the set $K_{\infty}$, hence any solution obtained for finite $n$ is also always a solution to the differential inclusion problem for \eqref{eq:K_liq} in a corresponding $n$-gon domain. This could for instance be exploited in numerical benchmarking (see the discussion in Section \ref{sec:solids_liquids}). Due to these similarities, in the sequel we  seek to discuss the two physical systems simultaneously.

A series of experiments and technological implementations which appeared over the last three decades have inspired and motivated an extensive body of work on the modelling and design of microstructure formation in NLCEs. Special focus has been given on the formation of martensitic-type stripe-domains (experimentally observed in \cite{KF95}, analysed  under the assumptions of large non-linear deformations in \cite{DD02} and infinitesimal displacements in \cite{Ces10}), respectively; complex configurations where optical microstructure interacts in a collaborative fashion with 
instabilities induced by geometrical constraints, such as wrinkling (modeled in \cite{CPB15}, images of the prototypes designed at NASA Langley Research Center are reported in \cite{PB16}) and actuation of soft structures made of NLCEs via thermal activation (see \cite{WMWTW15} and \cite{PKWB18} and
also supplementary videos available online).

Although planar and radial configurations such as the one in Figure \ref{1901112220} to the best of our knowledge have not been observed in NLCEs, they are common in liquid crystals where they are associated with topological defects (see \cite{Vir94, YAFO15}). In nematic elastomers instead, although radial -- and even spiral-like -- director configurations have been induced in membranes,
they are typically accompanied by large 3D stretches and out-of plane director re-arrangements \cite{GSK+18, KMG+18}. 
We hope the theoretical results and constructions described in this article will inspire further experimental investigation of complex microstructure morphology in NLCEs.

\subsection{Main results}

The objective of this note is the unified study of a specific class of planar solutions to differential inclusions of the forms \eqref{eq:ex_stress_free}, \eqref{eq:K_liq} and \eqref{1902010046} at a fixed temperature $\theta>0$ and for planar geometries. These type of deformations had been introduced by Conti \cite{C}, see also \cite{Pompe} and the constructions in \cite{K1}. Deformations and materials allowing for this class of constructions are of particular interest due to various reasons. Indeed, from a physical point of view
\begin{itemize}
\item materials which allow for these deformations are candidates for low hysteresis materials;
\item the constructions are motivated by specific deformation fields observed experimentally (e.g. tripole star deformations).
\end{itemize}
Moreover, in addition to these physical sources of interest, also from a purely mathematical point of view these constructions are relevant, as
\begin{itemize}
\item they can be used as building block constructions in convex integration schemes,
\item the deformations occur both in the theory of elastic crystals and also in models for nematic liquid crystal elastomers. This allows for a unified mathematical discussion of both systems.
\end{itemize}
Let us comment on some of these aspects in more detail:

On the one hand, these specific solutions are of particular interest as not only their bulk energy vanishes, but also their surface energy, measured for instance in terms of the $BV$ norm of $\nabla u$ is finite (see Section \ref{sec:stars} for some remarks on energetics). As a consequence, materials which exhibit such structures are candidates for materials with \emph{low hysteresis} as nucleation has low energy barriers (both in purely bulk but also in bulk and surface energy models) \cite{CKZ17}, see also \cite{zjm:09} for more information on hysteresis in shape-memory alloys.

On the other hand, in addition to their relevance in the analysis of hysteresis, microstructures of this type are often used as key building blocks in the construction of convex integration solutions. As the energies in \eqref{eq:energy}, \eqref{1902051753} and \eqref{1901311846} are typically highly non-quasi-convex and thus in particular not immediately amenable to the direct method in the calculus of variations, it came as a surprise, when it was discovered (first in the context of shape-memory alloys, later -- see \cite{ADMDS15} -- also in the context of nematic liquid crystal elastomers) that for a large set of possible boundary conditions exact solutions to \eqref{eq:ex_stress_free}, \eqref{eq:K_liq} and \eqref{1902010046} exist (see \cite{MS,K1} and the references therein). 
These solutions are obtained through iterative procedures in which oscillatory building blocks successively improve the construction, pushing it to become a solution to \eqref{eq:ex_stress_free} in the limit. For more information on this we refer to \cite{DaM12, D, MS1, CDK, K1, KMS03, ADMDS15, R16a} and the references therein. The solutions which we discuss below are frequently used as building blocks \cite{C,K1} in this context; they can even be applied in the \emph{quantitative} analysis of convex integration solutions \cite{RZZ16, RZZ17, RTZ18}.

Motivated by these considerations, in this note we seek to: 
\begin{itemize}
\item Extend the necessary and sufficient conditions for the presence of planar Conti type constructions derived in \cite{CKZ17} to arbitrary $n\in \N$. In particular, we reproduce the experimentally observed tripole star structures (both in the geometrically linearised and the non-linear theories). As a consequence, we also underline the observation from \cite{KK91} that within a geometrically non-linear theory tripole stars in shape-memory alloys are not exactly stress-free. An interesting aspect from the modeling point of view, these microstructures are planar and therefore fully covered by the 2D analysis we develop. However, in contrast to the experimentalists' point of view who interpret these microstructures as \emph{disclinations}, we offer an interpretation of these configurations as \emph{stressed microstructures} with low (elastic and surface) energy (see the discussion in Section \ref{sec:stars}).
\item Pass to the limit $n\rightarrow \infty$. Physically this limit corresponds to the passage from solid crystals to nematic elastomers. Our results can hence also be read as predictions on microstructure formation for experiments on nematic elastomers in highly symmetric domains.
\end{itemize}

To this end, we rely on the geometrically non-linear constructions from \cite{CKZ17} which we investigate for a general $n$-well problem before passing to the limit $n\rightarrow \infty$. As in \cite{CKZ17} we obtain necessary and sufficient conditions on the wells in order for the corresponding Conti constructions to exist. We remark that in the context of the two-dimensional, geometrically linearised hexagonal-to-rhombic phase transformation by completely different methods (relying on the characterisation of homogeneous deformations involving four variants of martensite) necessary conditions had been derived in an OxPDE summer project by Stuart Patching \cite{P14}. The sufficiency of the necessary conditions had previously been established in \cite{CPL14} in the geometrically linearised hexagonal-to-rhombic phase transformation. The results in \cite{CPL14} are also complemented by numerical simulations of possible solutions, which match the experimentally observed solutions in \cite{MA80, MA80a, KK91} well.

As side results of our discussion of the geometrically non-linear $n$-well model, we also show that by linearisation one directly obtains some solutions to the geometrically linearised problem and that for odd $n$ this requires fewer wells (only $n$) than in the geometrically non-linear setting (where a single layer construction already requires $2n$ wells). 
In addition to this, we report on attempts at producing analogous constructions in the geometrically non-linear three-dimensional $n$-well problem, in which we had originally also sought to construct Conti type solutions. Here however, we only obtained negative results showing that the two-dimensional situation allows for significantly more flexibility than its three-dimensional analogue.

\subsection{Organisation of the article}
The remainder of the article is organised as follows: In the main part of the article (Sections \ref{sec:nonlinear_n_well} and \ref{sec:lin_main}), we discuss the two-dimensional geometrically non-linear and linearised settings: In Section \ref{sec:nonlinear_n_well} we discuss the geometrically non-linear $n$-well construction in a regular $n$-gon, generalising the ideas from \cite{CKZ17}. Here we
discuss necessary and sufficient conditions (see Sections \ref{sec:necessary} and \ref{sec:single_layer}) for single layers of Conti-type structures. We then discuss their iterability, which turns out to be rather delicate in the geometrically non-linear situation and gives rise to the presence of stresses in geometrically non-linear tripole star structures (see Section \ref{sec:iteration_layer}). We then also pass to the limit $n\rightarrow \infty$ (Section \ref{sec:infty}) and discuss consequences for models of nematic liquid crystal elastomers (Section \ref{sec:solids_liquids}). In Section \ref{sec:lin} we linearise these constructions and observe that for the geometrically linearised constructions fewer variants of martensite are needed than for the geometrically non-linear ones. In particular, tripole star deformations are exactly stress-free in the geometrically linear theory (see Section \ref{sec:finite_n}). In Section \ref{sec:lin_infty}, we then also address constructions for geometrically linear models of liquid crystals. In this context we relate the special boundary conditions which had been chosen in \cite{ADMDS15} (see Section \ref{sec:lin_infty}) to our differential inclusions.
Finally, in Section \ref{sec:3D} we comment on our  (negative) results on analogous three-dimensional constructions.

\section{The non-linear construction in a regular $n$-gon}
\label{sec:nonlinear_n_well}

In this section we present the necessary and sufficient conditions for geometrically non-linear Conti constructions in a setting involving $n$ wells. 

Here we pursue the following objectives. We seek to:
\begin{itemize}
\item[(i)] Provide \emph{necessary} and \emph{sufficient} conditions for a geometrically non-linear, ``single layer'' Conti construction associated with a phase transformation for general $n\in \N$ (see Sections \ref{sec:necessary} and \ref{sec:single_layer}). This builds on and generalises the argument from \cite{CKZ17}.
\item[(ii)] Discuss the \emph{iterability} of the single layer constructions from (i). As a main observation, we show that, in general, this is not possible without allowing for a larger set of wells (see Proposition \ref{prop:non_iterable} in Section \ref{sec:iteration_layer}). Physically, the iteration of the construction in (i) reproduces for instance the tripole star deformations which are observed experimentally, see Section \ref{sec:stars}. We offer an interpretation of these in terms of slightly stressed low energy states (instead of viewing them as disclinations as in the experimental literature).
\item[(iii)] Pass to the \emph{limit} $n\rightarrow \infty$. This corresponds to the nematic liquid crystal elastomer limit (see Proposition \ref{prop:n_infty} in Section \ref{sec:infty}).
\end{itemize}

\subsection{Set-up and precise problem formulation}
\label{sec:problem}

In the sequel, we seek to identify necessary and sufficient conditions for the existence of a specific low energy nucleation mechanism associated with highly symmetric deformations. Let us describe this informally. We are interested in studying a class of deformations which satisfy the following properties:
\begin{itemize}
\item Outside of a large regular $n$-gon $\Omega_n^E$ and inside of a small regular $n$-gon $\Omega_n^I$, both with the same barycenter, the deformation is equal to a rotation (without loss of generality, we may assume it to be equal to the identity in the outside domain and a non-trivial rotation in the inner $n$-gon). Without loss of generality, we further assume that the barycenter of both $n$-gons is the origin.
\item In the set $\Omega_n^E\setminus \Omega_n^I$ the deformation is piecewise constant on a set of triangles formed by connecting the vertices of $\Omega_n^E$ and $\Omega_n^I$ (see Figures \ref{fig:polygons} and \ref{fig:2}).
\item We require that the deformation is associated with a phase transformation, i.e. that the piecewise constant deformation gradients in $\Omega_n^E\setminus \Omega_n^I$ only attain values in the set $\bigcup\limits_{j=1}^{m} SO(2)U_j$, where $U_j = P U_1 P^T$ for some $P \in \mathcal{P}_n$ and $U_1 \in \R^{2\times 2}_{sym}$ and where $\mathcal{P}_n\subset O(2)$ denotes the point group of the transformation at hand.
\item We require that the deformation is volume preserving.
\end{itemize}
Having fixed the outer $n$-gon $\Omega_n^E$, the condition on the volume preservation together with the fact that the deformation gradient has a constant determinant in $\Omega_n^E \setminus \Omega_n^I$ implies that after fixing a single vertex with coordinates $(x_1,x_2)$ of the inner $n$-gon, the deformation $u$ is already determined. Indeed, in order to ensure the volume preservation constraint, under the deformation $u$ the vertex has to be mapped to the deformed vertex $R_{\ast}(x_1,x_2)$, where $R_{\ast}$ is a rotation by $\frac{2\pi}{n}(1-2\alpha)$ and $\alpha\in(0,1)$ denotes the angle of rotation of the inner $n$-gon with respect to the outer one (see Figures \ref{fig:flip} and \ref{fig:2}). Hence, in principle, the deformation $u$ is determined by two parameters (e.g. the coordinates $(x_1,x_2)\in \R^2$). As in \cite{CKZ17}, we thus consider the two-parameter family of deformations given by

\beq
\label{DefU}
(\bar a,\psi) \mapsto Id + \bar a \begin{pmatrix} \sin(\psi) \\ \cos(\psi)\end{pmatrix} \otimes \begin{pmatrix} - \cos(\psi) \\ \sin(\psi) \end{pmatrix},
\eeq
where $Id \in \R^{2\times 2}$ denotes the identity matrix and $\psi \in (0,2\pi]$,
which is motivated by investigating the described  deformations with austenite boundary conditions corresponding to low hysteresis deformations (in fact to allow for simpler computations, in the sequel, we will often replace the identity boundary conditions by boundary conditions given by a fixed rotation). As in \cite{CKZ17} we will prove that the requirement that the deformation is associated with a phase transformation reduces the degrees of freedom from two parameters to a single parameter.\\

After this informal discussion of our problem, we present the formal problem set-up.
We start by introducing the following definitions. We remark that, here and below, for any set $A\subset \R^2$ we denote by $A^{co}$ its convex hull and by $\inte A$ its interior. Furthermore, by $\{e_1,e_2\}$ we denote an orthonormal basis of $\R^2.$

\begin{defi}
\label{DefOmega}
Let $n\in\mathbb{N}, n\geq 2$, $\alpha\in(0,1]$ and $r_I,r_E\in (0,+\infty)$ with $r_I<r_E.$ We say that $\Omega_n\subset\R^2$ is an \emph{$n-$gon configuration} if, given
\begin{align*}
E_i &= r_E\cos\Bigl(\frac{2\pi}{n}(i-1) \Bigr) e_1 + r_E\sin\Bigl(\frac{2\pi}{n}(i-1) \Bigr)  e_2,\qquad i=1,\dots, n,\\
I_i &= r_I\cos\Bigl(\frac{2\pi}{n}(i-1) +\alpha\frac{2\pi}{n} \Bigr) e_1 + r_I\sin\Bigl(\frac{2\pi}{n}(i-1) +\alpha\frac{2\pi}{n}\Bigr) e_2,\qquad i=1,\dots, n,
\end{align*}
and 
$$
\Omega_n^E = \{E_1,\dots,E_n\}^{co},\qquad \Omega_n^I= \{I_1,\dots,I_n\}^{co},
$$
we have $\Omega_n := \inte\bigl(\Omega_n^E\setminus \Omega_n^I\bigr)$. 
\end{defi}

\begin{figure}[t]
  \centering
  \includegraphics[width=0.4\linewidth,page=1]{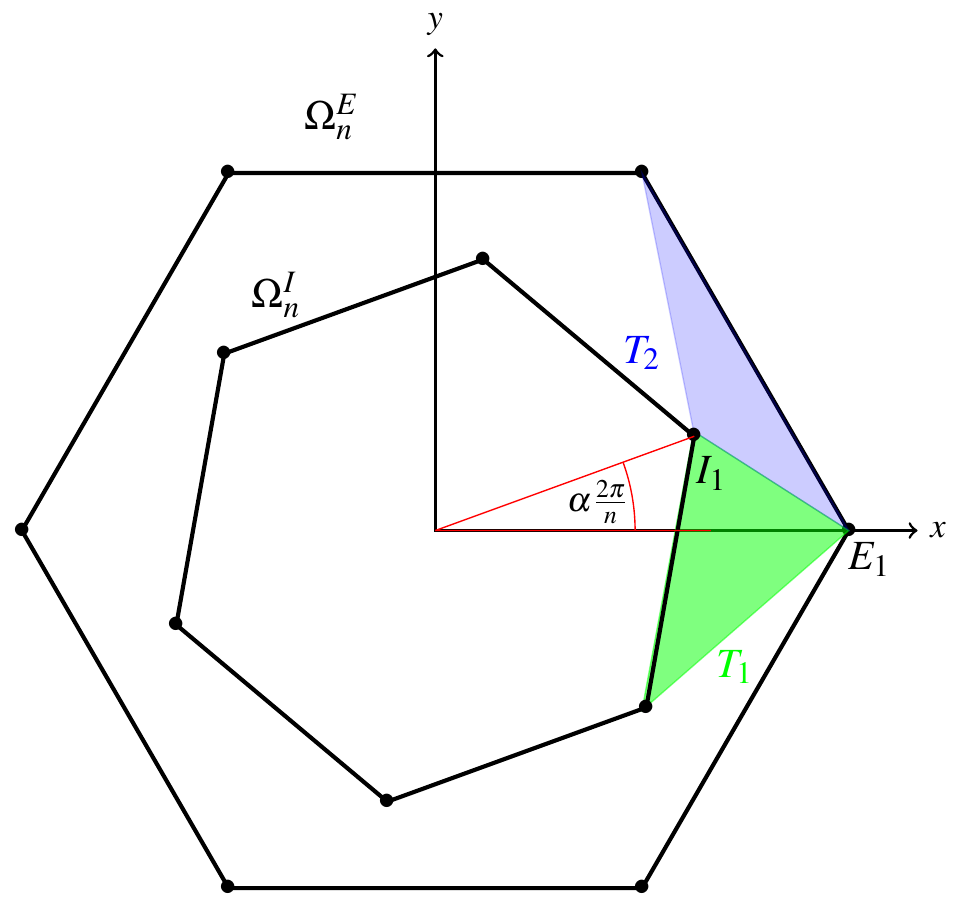}
  \caption{The inner and outer polygon are rotated by an angle
    $\frac{2\pi}{n}\alpha$ with respect to each other.}
  \label{fig:polygons}
\end{figure}

Given three points $p_1,p_2,p_3\in\R^2$, we denote by $\widehat{p_1p_2p_3}$ 
the open triangle $\widehat{p_1p_2p_3} = \inte\{p_1,p_2,p_3\}^{co},$ and by $\overline{p_1p_2}$ the vector $\overline{p_1p_2} = p_2 - p_1$. Finally, we denote by $e_{ij}$ the unit vector $e_{ij} = \frac{\overline{E_jI_i}}{|\overline{E_jI_i}|}$. Now, given an $n-$gon configuration $\Omega_n$ as in Definition \ref{DefOmega}, we define the internal triangles $T_i$ as
\[
T_i=
\begin{cases}
\widehat{E_{\frac{i+1}{2}}I_{\frac{i-1}{2}} I_{\frac{i+1}{2}}},\qquad&\text{if $i$ odd},\\[5pt]
\widehat{E_{\frac{i}{2}}I_{\frac{i}{2}}E_{1+\frac{i}{2}}},\qquad&\text{if $i$ even},
\end{cases}
\]
where we use the convention that $E_{n+1}=E_1$ and $I_0 = I_n$.\\

With this notation in hand, we now consider the following problem:\\
\paragraph{\textbf{Problem}:} Find $u\in W_{loc}^{1,\infty}(\R^2;\R^2)$ such that
\begin{enumerate}[label=(\roman*)]
\item \label{H1} for every $i=1,\dots,2n$, $u$ is affine on $T_i$;
\item \label{H2} $u=\Id$ on $\R^2\setminus \Omega_n^E$, where $\Id$ denotes the identity map;
\item \label{H3} $\nabla u(x) \in \bigcup_{P\in\mathcal{P}_n} SO(2) P U P^T$ for
  some $U\in\R^{2\times 2}$ and  for almost every $x\in \Omega_n^E \setminus \Omega_n^I$, where $\mathcal{P}_n \subset O(2)$ denotes the discrete (to be determined) symmetry group of our problem;
\item \label{H4}$u (x) = R_* x$ in $\Omega_n^I$, for some $R_*\in SO(2)$ of angle $\rho_n = \frac{2\pi}{n}\bigl(1-2\alpha \bigr)$. As a consequence, $R_{\ast} I_n = \Bigl(\cos\Bigl(\alpha\frac{2\pi}{n}\Bigr) e_1 - \sin\Bigl(\alpha\frac{2\pi}{n}\Bigr) e_2  \Bigr).$
\end{enumerate}

We remark that these conditions formalise the requirements of a ``Conti construction'' with symmetry. These are piecewise affine deformations (as stated in \ref{H1}) with specific linear boundary conditions \ref{H2} such that all involved deformation gradients are symmetry related as in \ref{H3}. The condition \ref{H4} is a consequence of the desired symmetry of the $n$-gon configuration in conjunction with the prescription of the identity boundary data in \ref{H2}. Indeed, by requiring austenite boundary data, we infer that $\det(\nabla u)=1$ on each triangle $T_i$, which can only be the case if $R_{\ast}$ is of the described form.
It corresponds to a ``flipping" of the coordinates of $I_n$, see Figures \ref{fig:flip} and \ref{fig:2}.

In the sequel, it will turn out that the symmetry group $\mathcal{P}_n = \mathcal{R}_1^n \cup \mathcal{R}_2^n$ associated with our problem is
a conjugated version of the symmetry group of a regular $n$-gon, called the
dihedral group. More precisely, the standard dihedral group of a regular $n$-gon is given by
\begin{align}
\label{eq:Pn}
  \mathcal{\hat{P}}_n:= \hat{\mathcal{R}}_1^{n} \cup \hat{\mathcal{R}}_2^{n}.
\end{align}
Here $\hat{\mathcal{R}}_1^{n}$ is the collection of all rotations leaving the $n$-gon invariant, i.e.
\begin{align*}
\hat{\mathcal{R}}_1^{n} := \left\{ \begin{pmatrix} \cos(\varphi_j) & \sin(\varphi_j) \\ - \sin(\varphi_j) & \cos(\varphi_j) \end{pmatrix}: \varphi_j = \frac{2\pi j}{n}, \ j \in \{0,\dots,n-1\} \right\},
\end{align*}
and $\hat{\mathcal{R}}_2^{n}$ is the collection of the corresponding reflections $\hat{\mathcal{R}}_2^n:= \begin{pmatrix} 1 & 0 \\ 0 & -1 \end{pmatrix} \mathcal{R}^n_1$. In our problem, we will encounter a conjugated version of this, where
\begin{align*}
\mathcal{R}_2^{n} &:= 
\begin{pmatrix}
e_{11} & e_{11}^{\perp} 
\end{pmatrix}
\hat{\mathcal{R}}^n_2
\begin{pmatrix}
e_{11} & e_{11}^{\perp} 
\end{pmatrix}
=
{
                    \begin{pmatrix}
                      e_{11} & e_{11}^\perp
                    \end{pmatrix}
                               \begin{pmatrix}
                                 1 & 0 \\ 0 & -1
                               \end{pmatrix}
                                              \hat{\mathcal{R}}_1^n                                              
                    \begin{pmatrix}
                      e_{11} & e_{11}^\perp
                    \end{pmatrix}^T}\\
& =  \bigl(e_{11}\otimes e_{11} - e_{11}^\perp\otimes e_{11}^\perp \bigr)\hat{\mathcal{R}}_1^{n}                    .
\end{align*}
We further note that $\hat{\mathcal{R}}_1^n$ is invariant under the change of basis to
$(e_{11} \ e_{11}^\perp)$ since $SO(2)$ is commutative. Hence, the symmetry group in our problem
\begin{align}
\label{eq:symmetrygroup}
  \mathcal{P}_n := \begin{pmatrix}
                      e_{11} & e_{11}^\perp
                    \end{pmatrix}
                               \left(
\hat{\mathcal{R}}_1^n \cup  \begin{pmatrix}
                                 1 & 0 \\ 0 & -1
                               \end{pmatrix} \hat{\mathcal{R}}_1^n
                               \right)
                               \begin{pmatrix}
                      e_{11} & e_{11}^\perp
                    \end{pmatrix}^T =: \mathcal{R}_1^n\cup \mathcal{R}_2^n,
\end{align}
is given by the dihedral group (that is the symmetry group of the standard regular $n$-gon)
conjugated with a change of basis $(e_{11} \ e_{11}^\perp)$.

\begin{figure}
\includegraphics[width=0.4 \textwidth,page=2]{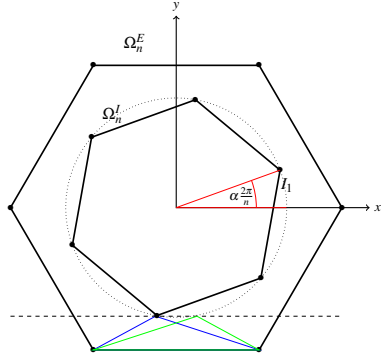}
\caption{The ``flipping'' condition formalised in \ref{H4}. In order to ensure volume preservation an outer blue triangle in the reference configuration (see the online version for the colours) is mapped to the green outer triangle in the deformed domain. Assuming the deformation to be a rotation in the inner $n$-gon $\Omega_n^I$, this is the only possible deformation that preserves the volume of the outer triangles. We also refer to Figure \ref{fig:2} for another illustration of the ``flipping'' condition.}
\label{fig:flip}
\end{figure}

\begin{rmk}
\label{Rem1}
In the sequel, we will often rely on the following commutation relations:
Given $U\in\R^{2\times2}$, and any $Q\in SO(2)$, then 
$$
\bigcup_{P\in\mathcal{P}_n} SO(2) P QU P^T =  \bigcup_{P\in\mathcal{P}_n}SO(2) P U P^T.
$$
Indeed, if $P\in\mathcal{R}_1^n$, then $QP = PQ$. If instead $P\in\mathcal{R}_2^n$, then $PQ = Q^TP$. 
\end{rmk}

\begin{figure}[t]
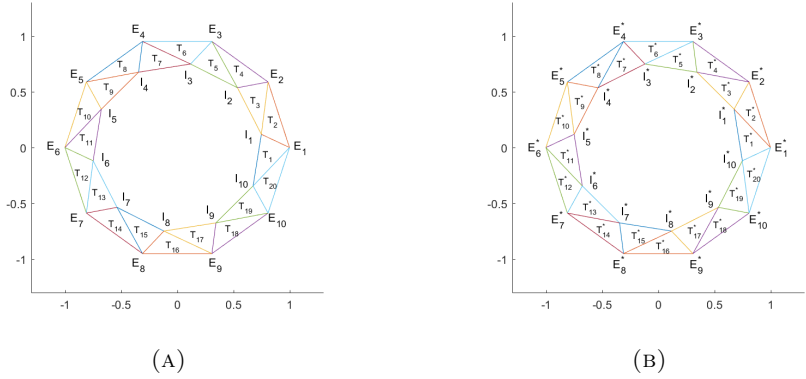

\begin{subfigure}{.5\textwidth}
  \centering
  \includegraphics[width=.99\linewidth,page=3]{figures}
  \caption{}
  \label{fig00001}
\end{subfigure}%
\begin{subfigure}{.5\textwidth}
  \centering
  \includegraphics[width=.99\linewidth,page=4]{figures}
  \caption{}
  \label{fig00002}
\end{subfigure}
\caption{Example of $n-$gon with $n=10$. On the left before the action of the map $u$, on the right after its action.  Here, we denoted by $E^*_i,I^*_i,T^*_i$ the quantities $u(E_i),u(I_i),u(T_i).$ In order to ensure volume preservation, a necessary condition is the ``flipping'' of the triangles on which $\nabla u$ is constant which is formalised in condition \ref{H4} in our problem formulation.}
\label{fig:2}
\end{figure}

In the next sections, we discuss the necessary and sufficient conditions for a solution of the described problem. Moreover, we discuss the iterability of the associated constructions and the limit $n\rightarrow \infty$.

\subsection{Necessary condition}
\label{sec:necessary}

Regarding the necessary conditions for the existence of a phase transformation associated with a Conti-construction in a regular $n$-gon, we obtain the following analogue of \cite{CKZ17}:

\begin{prop}
\label{lem:nec}
A necessary condition for the satisfaction of \ref{H1}--\ref{H4} in $\Omega_n$ is the condition that
\begin{align}
\label{eq:nec}
\phi:= \arccos \bigl(e_{11}\cdot e_{n1}\bigr) = \frac{\phi_n}{2},
\end{align}
where $\phi_n = \frac{n-2}{n} \pi$ is the interior angle at each corner of the regular $n$-gon. In particular, this entails the necessary condition
\begin{align}
\label{eq:U1}
U  = U (a)= \Bigl(a e_{11}\otimes e_{11}+\frac 1a e_{11}^\perp \otimes e_{11}^\perp + \frac{a^{-1}-a}{\tan \phi} e_{11}\otimes e_{11}^\perp\Bigr),
\end{align}
for some $a>0$, and where $e_{11}^\perp \in \mathbb{S}^1$ is such that $e_{11}^\perp\cdot e_{11}=0$, $e_{11}\times e_{11}^\perp>0$.
The associated point group $\mathcal{P}_n$ is necessarily given by the group in \eqref{eq:symmetrygroup}.
Finally, 
\beq
\label{RadRel}
\frac{r_I}{r_E} = \frac{1}{\cos\Bigl(\frac\pi n \Bigr)}\biggl(\cos\Bigl(\frac{\pi}{n}(1-2\alpha)\Bigr) - \sqrt{\sin\Bigl(\frac{\pi}{n}2\alpha\Bigr) \sin\Bigl( \frac{2\pi}{n}(1-\alpha)\Bigr)}	\biggr),
\eeq
and 
\beq
\label{EqA}
a = \sqrt{\frac{\sin\Bigl(\frac{2\pi}{n}(1-\alpha)\Bigr)}{\sin\Bigl(\frac{2\pi}{n}\alpha\Bigr)}},
\eeq
where $\alpha$ is as in Definition \ref{DefOmega}.
\end{prop}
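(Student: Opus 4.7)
The plan is to exploit the fact that conditions \ref{H1}, \ref{H2}, and \ref{H4} already uniquely determine $F_i := \nabla u|_{T_i}$ on each of the $2n$ triangles once the geometric parameters $r_I, r_E, \alpha, n$ are fixed, since each affine piece is pinned down by its values at three prescribed vertices. Condition \ref{H3} then becomes a compatibility constraint that I would translate into the claimed geometric relations. By the $\frac{2\pi}{n}$-rotational symmetry of the configuration, it suffices to analyse two representative triangles, the even one $T_2 = \widehat{E_1 I_1 E_2}$ and the odd one $T_1 = \widehat{E_1 I_n I_1}$; all others are obtained by conjugation with rotations by integer multiples of $\frac{2\pi}{n}$, and by Remark \ref{Rem1} this does not change the orbit.

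I would first compute $F_2$. Since $u(E_1) = E_1$ and $u(E_2) = E_2$, the side $\overline{E_1 E_2}$ is preserved, so in the orthonormal frame $(e_{11}, e_{11}^{\perp})$ the matrix $F_2$ has an identity-plus-shear form along this direction. Imposing $u(I_1) = R_* I_1$ and the determinant constraint $\det F_2 = 1$ implied by \ref{H4} yields $F_2 = R\, U(a)$ for some $R \in SO(2)$ and $U(a)$ of exactly the shape \eqref{eq:U1}, with $a$ given by an explicit ratio of side lengths. A law-of-sines argument in the triangle $\widehat{O E_1 I_1}$ (where $O$ is the common barycenter) reduces this ratio to the closed form \eqref{EqA}. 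The analogous computation on $T_1$ produces $F_1$ in the same basis; here no edge is preserved, but the values at all three vertices, driven by the flipping action of $R_*$ on $I_1$ and $I_n$, again determine $F_1$ explicitly.

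The crux is then the requirement that $F_1$ and $F_2$ lie in a single orbit $\bigcup_{P \in \mathcal{P}_n} SO(2)\, P U P^T$ for one common $U$. Their singular values must therefore coincide, and they must be related by conjugation with some $P \in \mathcal{P}_n$ (up to a left rotation). The only element locally available at $E_1$ that interchanges the neighbouring triangles $T_1$ and $T_2$ is the reflection across an axis through $E_1$; matching $F_2$ with $F_1$ forces this axis to bisect the angle at $E_1$ between $e_{11}$ and $e_{n1}$. Because the dihedral structure of $\Omega_n$ requires this reflection axis to coincide with the radial line through $E_1$ and $O$, which also bisects the interior angle $\phi_n$ of the outer $n$-gon at $E_1$, one concludes $\phi = \phi_n/2$, i.e. \eqref{eq:nec}. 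Once this angle identity is established, \eqref{RadRel} follows by the law of cosines in $\widehat{O E_1 I_1}$ with angles $\frac{2\pi\alpha}{n}$ at $O$ and $\phi_n/2$ at $E_1$, while \eqref{eq:U1} and \eqref{EqA} are read off the explicit representation of $F_2$. The structure of $\mathcal{P}_n$ as the conjugated dihedral group \eqref{eq:symmetrygroup} is forced because $\mathcal{P}_n$ must contain both the radial reflection just identified and the rotational $\mathbb{Z}/n$ that cycles through the vertices $E_j$.

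The main obstacle I anticipate is the algebraic matching of $F_1$ and $F_2$ through the dihedral reflection: one must carefully track the flipping action of $R_*$ on the inner vertices and verify that the compatibility of singular-value structures under the group action delivers precisely the bisection condition $\phi = \phi_n/2$, rather than a weaker relation between $\alpha$ and $r_I/r_E$. A secondary difficulty is keeping the trigonometric bookkeeping sufficiently compact to arrive at the clean closed forms \eqref{RadRel} and \eqref{EqA}.
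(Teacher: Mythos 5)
Your overall skeleton (determine the affine pieces from \ref{H1}, \ref{H2}, \ref{H4}, then read \ref{H3} as a compatibility constraint) matches the paper, but the crux step — deriving \eqref{eq:nec} — contains a genuine gap. Condition \ref{H3} only requires that all the gradients lie in $\bigcup_{P\in\mathcal{P}_n}SO(2)PUP^T$ for some matrix $U$ and some (to-be-determined) point group; the element relating the values on $T_1$ and $T_2$ acts by \emph{conjugation in matrix space} and need not be a spatial symmetry of $\Omega_n$ carrying $T_1$ onto $T_2$. Your claim that this element must be the reflection through $E_1$ about the \emph{radial} line $OE_1$, and that matching $F_1$ with $F_2$ forces this line to bisect the angle between $e_{11}$ and $e_{n1}$, is not only unjustified but false in the admissible configurations: for $\alpha\neq\frac12$ the radial line does not bisect $\angle I_nE_1I_1$ (e.g. for $n=3$, $\alpha=\frac14$ with $r_I/r_E$ as in \eqref{RadRel}, the two sub-angles are roughly $12.4^\circ$ and $17.6^\circ$ while their sum is $30^\circ=\phi_3/2$), so an argument enforcing such a bisection would contradict the proposition rather than prove it; the reflection that actually enters the construction is $P_0$, taken across the shared edge direction $e_{11}$, and only as a conjugation. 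The paper's mechanism is different: from \ref{H4} and continuity it extracts that $RF_1$ has eigenpairs $(a,e_{11})$, $(a^{-1},e_{n1})$ and — using the \emph{next} triangle $T_3$ and continuity across $E_2I_1$, which your proposal never invokes — that $RF_2$ has eigenpairs $(a,e_{11})$, $(a^{-1},e_{12})$; writing the orbit condition as $RF_1=PRF_2Q$ with $P,Q\in O(2)$, $\det P\det Q=1$, it concludes $Qe_{11}=\pm e_{11}$, $Qe_{n1}=\pm e_{12}$ and hence $\cos\phi=\pm\cos\bigl(\phi+\tfrac{2\pi}{n}\bigr)$, which is what yields \eqref{eq:nec}. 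Without that second continuity relation your compatibility analysis cannot be closed.

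The quantitative identities are also not obtained as you describe. Since $a=l_1/l_2$ with $l_1,l_2$ as in \eqref{l1}--\eqref{l2}, the closed form \eqref{EqA} is \emph{equivalent} to the constraint \eqref{RadRel} and is false for generic $r_I/r_E$; it therefore cannot be "reduced by a law of sines in $\widehat{OE_1I_1}$" at the stage where you compute $F_2$, before \ref{H3} has been used. Likewise \eqref{RadRel} does not follow from a law of cosines in the single triangle $\widehat{OE_1I_1}$ "with angle $\phi_n/2$ at $E_1$": the angle of that triangle at $E_1$ is not $\phi_n/2$ (only the full angle $\angle I_nE_1I_1$ is, cf. the numbers above). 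The paper needs both inner vertices: it computes $l_1l_2\cos\phi$ via the dot product \eqref{LongEq}, squares, solves the resulting quartic in $r_I/r_E$ and discards three inadmissible roots (Appendix \ref{LongEqSolve}); some argument of this type (or an equivalent two-triangle sine-rule system) is required and is missing. Finally, your opening observation that $F_2$ fixes $\overline{E_1E_2}$ and is a simple shear is correct (it reappears in the paper's sufficiency proof), but asserting $F_2=RU(a)$ with $U(a)$ "of exactly the shape \eqref{eq:U1}" in the same basis and with the same $a$ assumes part of what is to be proved — \eqref{eq:U1} is derived in the paper for $RF_1$ on the odd triangle, while the even triangle carries the mirrored matrix $P_0UP_0$.
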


\begin{rmk}
We notice that for each fixed $n \geq 1$ \eqref{EqA} gives a one-to-one relation between $a>0$ and $\alpha\in(0,1)$. Indeed, $a(\alpha)$ is strictly monotone and $\lim_{\alpha\to 0}a(\alpha) = +\infty$ and $a(1)=0$. Moreover, we note that as expected from the conditions \ref{H1}-\ref{H4}, we have $a(\frac{1}{2})=1$. 
\end{rmk}

\begin{figure}
\includegraphics[scale=0.7,page=5]{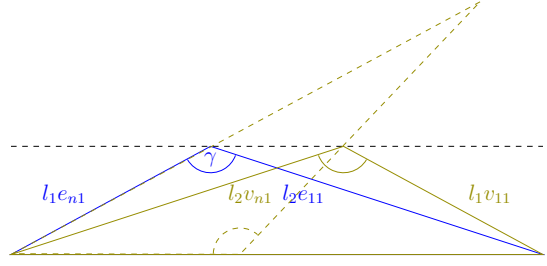}
\caption{An illustration of the eigenvalue condition identified in the proof of
  Proposition \ref{lem:nec}. We observe that $e_{n1}\cdot
  e_{11}=v_{n1}\cdot v_{11}=\cos(\gamma)$ and that there exists $\hat{R} \in SO(2)$ such that
  $\hat{R} e_{n1}=v_{n1}$ and $\hat{R} e_{11}=v_{11}$. After a rotation the image triangle (in green) can be rotated onto the reference triangle (in blue). This yields the triangle whose sides are depicted with the green dashed lines. Formalising this leads to the proof of \eqref{Eq0Nec}.}
\label{fig:eigenv}
\end{figure}

\begin{proof}

The argument to prove \eqref{eq:nec}--\eqref{eq:U1} follows along the lines of \cite{CKZ17}, which we present for self-containedness.

Let us start by noticing that, since we assume that for each $i=1,\dots,n$ the deformation $u$ is affine in $T_i$, then $\nabla u|_{T_i} = F_i$, for some $F_i\in\R^{2\times 2}$.

As in \cite{CKZ17}, we now first identify suitable eigenvectors and eigenvalues in the construction:
Let $l_1 = |\overline{I_{i-1}E_i}|$ and $l_2 = |\overline{I_{i}E_i}|$ for $i=1,\dots,n$ (where we remark that by symmetry these lengths are independent of $i\in\{1,\dots,n\}$, c.f. Definition \ref{DefOmega}). By \ref{H4}, i.e. by the ``flipping'' of the internal points of the outer triangles, there exist $v_{n1},v_{11}\in\mathbb{S}^1$ such that
$$
F_1 \overline{I_nE_1} = F_1 l_{1} e_{n1} = l_2 v_{n1},\qquad
F_1 \overline{I_1E_1} = F_1 l_{2} e_{11} = l_1 v_{11},
$$
and a rotation $\hat{R} \in SO(2)$ such that
  \begin{align*}
    \hat{R} e_{n1} = v_{n1}, \qquad   
     \hat{R} e_{11} = v_{11},
  \end{align*}
 see also Figures \ref{fig:flip}-\ref{fig:eigenv}.
Therefore,
$$
e_{n1}\cdot e_{11} = \hat{R} e_{n1}\cdot \hat{R} e_{11} = v_{n1}\cdot v_{11},
$$
and, setting $R := \hat{R}^T$,
\beq
\label{Eq0Nec}
RF_1e_{n1} = \frac 1a e_{n1},\qquad RF_1e_{11} = a e_{11},
\eeq
where $a:=\frac{l_1}{l_2}.$ Since $u$ is continuous, it must hold that
\beq
\label{Eq1Nec}
RF_2 e_{11} = a e_{11}.
\eeq
Furthermore, repeating the above arguments based on the  condition \ref{H4} (which simply follows by symmetry as $T_3$ is a rotation of $T_1$ by $\frac{2\pi}{n}$),
we have that
$$
RF_3 e_{12} = \frac 1a e_{12},\qquad RF_3e_{22} = a e_{22}. 
$$
The continuity of $u$ then again implies that
\beq
\label{Eq2Nec}
RF_2e_{12}= \frac 1a e_{12}.
\eeq
Let us suppose now that there exists $P,Q\in O(2)$ with $\det P\det Q = 1$ such
that $R F_1 = P R F_2 Q$. 
Then,
\begin{align*}
&a = a|P^Te_{11}|= |P^T R F_1 e_{11}|= | R F_2 Q e_{11}|,\\
&\frac 1a = \frac 1a |P^Te_{n1}|= |P^T R F_1 e_{n1}|= | R F_2 Q e_{n1}|.
\end{align*}
But, by \eqref{Eq1Nec}--\eqref{Eq2Nec}, $a$ and $\frac1a$ are simple eigenvalues of $RF_2$ and thus $Qe_{11} = \pm e_{11}$ and $Qe_{n1} = \pm e_{12}.$
Hence, 
$$
\cos \phi := e_{11}\cdot e_{n1} = Q e_{11}\cdot Q e_{n1} = \pm e_{11}\cdot e_{12} = \pm \cos\Bigl(	\frac{2\pi}{n}+\phi\Bigr).
$$
The only solution $\phi$ to this equation in the interval $\Bigl(0, \frac{\pi(n-2)}{n}\Bigr)$ (where the construction is respected) is $\phi = \frac{\pi(n-2)}{2n}$. Furthermore, defining $U:=RF_1$, by \eqref{Eq0Nec}, it must be of the form
\begin{align}
\label{eq:U11}
\begin{split}
U& = RF_1 
= a e_{11}\otimes e_{11}+\frac 1a e_{11}^\perp \otimes e_{11}^\perp + \frac{a^{-1}-a}{\tan \phi} e_{11}\otimes e_{11}^\perp,
\end{split}
\end{align}
where $e_{11}^\perp$ is such that $e_{11}\times e_{11}^\perp > 0$ and where we exploited the fact that $e_{n1} = \cos(\phi) e_{11} + \sin(\phi) e_{11}^\perp.$ 
This concludes the argument for \eqref{eq:nec} and \eqref{eq:U1}. 

The statement on the symmetry group then follows from the symmetry of the domains.
\\

We next discuss the derivation of the identities \eqref{RadRel} and \eqref{EqA}.
In order to prove \eqref{RadRel}, we first notice that on the one hand,
\begin{align}
\label{l1}
l_1 &= |E_1 - I_n | = \sqrt{r_E^2+r_I^2 -2r_Er_I \cos\Bigl(\frac{2\pi}{n} (1-\alpha) \Bigr)},\\ 
\label{l2}
l_2 &= |E_1 - I_1 |= \sqrt{r_E^2+r_I^2 -2r_Er_I \cos\Bigl(\frac{2\pi}{n}\alpha \Bigr)} .
\end{align}
On the other hand,
\beq
\label{LongEq}
\begin{split}
&l_1l_2 \cos \phi = \overline{E_1I_n}\cdot \overline{E_1I_1} = 
\bigl(I_n - E_1	\bigr)\cdot \bigl(I_1 - E_1	\bigr)\\
& = 
 \begin{pmatrix}
      r_I\cos(\frac{2\pi}{n}(\alpha-1))-r_E \\ r_I \sin(\frac{2\pi}{n}(\alpha-1))
    \end{pmatrix}
    \cdot
     \begin{pmatrix}
      r_I\cos(\frac{2\pi}{n}\alpha)-r_E \\ r_I \sin(\frac{2\pi}{n}\alpha)
    \end{pmatrix}\\
    &= r_E^2 + r_I^2\left(\cos(\frac{2\pi}{n}\alpha)\cos(\frac{2\pi}{n}(\alpha-1)) +\sin(\frac{2\pi}{n}\alpha)\sin(\frac{2\pi}{n}(\alpha-1))  \right) \\& \quad - r_Ir_E \left(\cos(\frac{2\pi}{n}(\alpha-1))+\cos(\frac{2\pi}{n}\alpha)\right) \\
    &= r_E^2+r_I^2 \cos(\frac{2\pi}{n}) - r_Ir_E \left(\cos(\frac{2\pi}{n}(\alpha-1))+\cos(\frac{2\pi}{n}\alpha)\right)\\
& = r_E^2 + r_I^2\cos\Bigl(\frac{2\pi}{n}\Bigr)-2 r_I r_E \cos \Bigl(\frac{\pi}{n}(2\alpha-1)\Bigr)\cos \Bigl(\frac{\pi}{n}\Bigr).
\end{split}
\eeq
Here, in the last step, we have used the trigonometric identity
\begin{align*}
\cos(\psi_1) + \cos(\psi_2) = 2\cos\left(\frac{1}{2}(\psi_1+ \psi_2) \right) \cos\left(\frac{1}{2}(\psi_1 - \psi_2)\right).
\end{align*}
Taking the square of \eqref{LongEq} and exploiting \eqref{l1}--\eqref{l2} gives a fourth order equation in $x=\frac{r_I}{r_E}$. Out of the four solutions of this equation, the only satisfying \eqref{LongEq} and such that $x\in(0,1)$ provided $\alpha\in(0,1)$ is given by \eqref{RadRel}. We refer the reader to Appendix \ref{LongEqSolve} for the details. Furthermore, using that $a = \frac{l_1}{l_2}$ and exploiting \eqref{RadRel} in \eqref{l1}--\eqref{l2} we deduce \eqref{EqA}.  
\end{proof}

\subsection{Sufficient conditions}
\label{sec:single_layer}

We discuss the sufficiency of the necessary condition by explicitly constructing a ``single layer'' Conti construction, i.e. by constructing a deformation as illustrated in Figure \ref{fig:2}.

\begin{prop}
\label{prop:suff}
Let $a>0,$ $\alpha\in(0,1)$, $r_E,r_I>0$ satisfy \eqref{RadRel}--\eqref{EqA}. Let also $U:=U(a)$ be as in \eqref{eq:U1}. Then there exists a deformation $u$ such that \ref{H1}--\ref{H4} are satisfied.
\end{prop}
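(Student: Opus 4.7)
The plan is to exhibit $u$ explicitly by prescribing vertex values and interpolating affinely on each triangle. Specifically, I set $u=\Id$ on $\R^2\setminus\Omega_n^E$, $u(x)=R_*x$ on $\Omega_n^I$, and on each triangle $T_i$ I take $u$ to be the unique affine extension satisfying $u(E_k)=E_k$ at every outer vertex and $u(I_k)=R_*I_k$ at every inner vertex of $T_i$. Since these vertex values agree across shared vertices and the restrictions of $\Id$ and $R_*x$ to the edges of $\partial\Omega_n^E$ and $\partial\Omega_n^I$ are themselves affine, continuity of $u$ is immediate, giving \ref{H1}, \ref{H2}, and \ref{H4} at once.

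The core task is to verify \ref{H3}: that each $F_i:=\nabla u|_{T_i}$ lies in $\bigcup_{P\in\mathcal{P}_n}SO(2)PU(a)P^T$. I would first treat $T_1$, where $F_1$ is determined by $F_1(l_2 e_{11})=R_*I_1-E_1$ and $F_1(l_1 e_{n1})=R_*I_n-E_1$. Writing $\sigma$ for the reflection across the line through $E_1$ and the origin, the flipping property in \ref{H4} gives $R_*I_1=\sigma I_n$, $R_*I_n=\sigma I_1$, and $\sigma E_1=E_1$; hence $|R_*I_1-E_1|=l_1$, $|R_*I_n-E_1|=l_2$, and $(R_*I_1-E_1)\cdot(R_*I_n-E_1)=(I_n-E_1)\cdot(I_1-E_1)=l_1 l_2\cos\phi$, combining \eqref{LongEq} with the fact that $\phi=\phi_n/2$ is enforced by \eqref{RadRel}. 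Together with a brief determinant calculation yielding $\det F_1=1$, this says $(F_1 e_{11}, F_1 e_{n1})$ is the image of $(a e_{11},(1/a)e_{n1})$ under a single $R\in SO(2)$. Since $U(a)e_{11}=ae_{11}$ and $U(a)e_{n1}=(1/a)e_{n1}$ by a direct check from \eqref{eq:U1} using $e_{n1}=\cos\phi\,e_{11}+\sin\phi\,e_{11}^\perp$, I conclude $F_1=R U(a)\in SO(2)U$. The identities \eqref{RadRel}--\eqref{EqA} enter precisely to make $l_1/l_2=a$ and $\phi=\phi_n/2$ simultaneously valid.

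For the remaining triangles I would use the rotational equivariance of the construction. Letting $Q_k$ denote the rotation by $2\pi k/n$, one checks $u(Q_k x)=Q_k u(x)$, so $F_{2k-1}=Q_{k-1}F_1 Q_{k-1}^T$ and $F_{2k}=Q_{k-1}F_2 Q_{k-1}^T$ with $Q_{k-1}\in\mathcal{R}_1^n\subset\mathcal{P}_n$. By Remark \ref{Rem1} it therefore suffices to show $F_2\in\bigcup_P SO(2)P U(a)P^T$. Continuity across $E_1 I_1$ forces $F_2 e_{11}=F_1 e_{11}$, while continuity across $I_1 E_2$ together with $Q_1^T e_{12}=e_{n1}$ gives $F_2 e_{12}=F_3 e_{12}=Q_1 F_1 Q_1^T e_{12}=Q_1 F_1 e_{n1}$. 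Combined with $F_1=R U(a)$ and the commutativity of $SO(2)$, this yields $R^T F_2 e_{11}=a e_{11}$ and $R^T F_2 e_{12}=(1/a)e_{12}$. I would then pick $P_0:=e_{11}\otimes e_{11}-e_{11}^\perp\otimes e_{11}^\perp\in\mathcal{R}_2^n$, the reflection across $e_{11}$, and verify by a short trigonometric computation that $e_{12}$ sits at angle $\pi-\phi$ from $e_{11}$, so that $e_{12}=-P_0 e_{n1}$. This identity forces $P_0 U(a)P_0^T e_{11}=a e_{11}$ and $P_0 U(a) P_0^T e_{12}=(1/a)e_{12}$, whence $R^T F_2=P_0 U(a) P_0^T$ and $F_2\in SO(2) P_0 U(a) P_0^T$.

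The main obstacle is the identification of the correct reflection $P_0\in\mathcal{R}_2^n$ for the even-indexed triangles, which boils down to the angle identity $\angle(e_{11},e_{12})=\pi-\phi$; this identity depends only on $n$ (not on $\alpha$) and can be extracted from a variant of the computation in \eqref{LongEq} with indices shifted. Once it is in place, the remainder of the proof reduces to matching norms, an inner product and a determinant for $F_1$, and to invoking rotational symmetry plus Remark \ref{Rem1} to propagate the conclusion to every $F_i$.
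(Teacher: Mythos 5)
Your proof is correct, and it takes a genuinely different route from the paper's. The paper builds the piecewise constant tensor field $F$ first (prescribing $F = Q_{\frac{j-1}{2}} U Q_{\frac{j-1}{2}}^T$ on odd triangles and $F = Q_{\frac{j-2}{2}} P_0 U P_0 Q_{\frac{j-2}{2}}^T$ on even ones), then verifies the Hadamard jump conditions across all interfaces \eqref{RkOdd}--\eqref{RkE}, and finally invokes simple-connectedness of $\R^2$ to integrate $F$ to a potential $u$. You instead build $u$ directly by prescribing vertex values ($u(E_k)=E_k$, $u(I_k)=R_*I_k$) and interpolating affinely, so continuity and conditions \ref{H1}, \ref{H2}, \ref{H4} are automatic and no Poincar\'e-type argument is needed; the work then shifts entirely to verifying \ref{H3}, i.e.\ that each $F_i$ lands in the correct well. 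Your verification of \ref{H3} mirrors the necessary-conditions argument of Proposition \ref{lem:nec} (reusing the flip across $\sigma$, the angle/norm comparison, and the determinant check to produce the rotation $R$) rather than the paper's sufficiency argument; in particular, the relations $|R_*I_1-E_1|=l_1$, $|R_*I_n-E_1|=l_2$, preservation of the inner product under $\sigma$, and $\det F_1=1$ identify $F_1=RU(a)$, and the continuity relations $F_2 e_{11}=F_1 e_{11}$, $F_2 e_{12}=Q_1 F_1 e_{n1}$ combined with $e_{12}=-P_0 e_{n1}$ identify $F_2 = R P_0 U(a) P_0$. The rotational equivariance step $F_{2k-1}=Q_{k-1}F_1Q_{k-1}^T$, $F_{2k}=Q_{k-1}F_2Q_{k-1}^T$ is clean and correct because the vertex data is equivariant under $Q_1$. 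The paper's route makes the inclusion \ref{H3} tautological by construction and spends its effort on compatibility (including the separate length check \eqref{LengthE1E2}--\eqref{LengthE1E2bis} needed to find $R_E$); yours makes compatibility tautological and spends its effort on the inclusion. Both work, and yours is arguably the more elementary presentation. One small point worth making explicit: your final step uses that $e_{11}$ and $e_{12}$ are linearly independent to conclude $R^TF_2 = P_0U(a)P_0$ from agreement on those two vectors; this holds since $\angle(e_{11},e_{12})=\pi-\phi\in(0,\pi)$.
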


\begin{proof}
We argue in three steps. Here we first construct a tensor field $F$ in $\Omega^E_n \setminus \Omega^I_n$, and then in $\R^2 \setminus \Omega_n^E$ and $\Omega_n^I$. Finally, we discuss the overall compatibility, showing that $F= \nabla u$ for some piecewise constant deformation $u:\R^2 \rightarrow \R^2$.\\

\emph{Step 1: Deformation in the region $\Omega^E_n \setminus \Omega_n^I$}. We first construct a piecewise constant tensor field $F: \Omega_n^E \setminus \Omega_n^I \rightarrow  \R^{2\times 2}$.
Let us start by setting $F = U$ in $T_1$, and $F = P_0 U P_0$ in $T_2$, where $P_0 = e_{11}\otimes e_{11} - e_{11}^\perp\otimes e_{11}^\perp \in O(2).$ We have that $U$ and $P_0 U P_0$ are compatible across the line parallel to $e_{11}.$ Indeed, 
\beq
\label{U1U2R1}
U-P_0 U P_0 = 2\frac{a^{-1}-a}{\tan \phi} e_{11}\otimes e_{11}^\perp,\qquad	\bigl(U-P_0 U P_0\bigr) e_{11} = 0.
\eeq
Then, we define $F$ as follows:
\beq
\label{defDu}
F = 
\begin{cases}
Q_{\frac{j-1}{2}} U Q_{\frac{j-1}{2}}^T,\qquad&\text{in $T_j$ if $j$ odd},\\
Q_{\frac{j-2}{2}} P_0 U P_0 Q_{\frac{j-2}{2}}^T,\qquad&\text{in $T_j$ if $j$ even}.
\end{cases}
\eeq
Here $Q_{\varphi} := Q(\frac{2\pi}{n} \varphi)$ with $Q(\varphi):= \begin{pmatrix} \cos(\varphi) & -\sin(\varphi)\\ \sin(\varphi) & \cos(\varphi)\end{pmatrix}\in SO(2)$.
Furthermore, we have
\begin{align}
\label{eq:lin_comb}
\begin{split}
e_{n1} &= \cos (\phi) e_{11} + \sin (\phi) e_{11}^\perp = \sin\Bigl(\frac{\pi}{n}\Bigr) e_{11} + \cos\Bigl(\frac{\pi}{n}\Bigr) e_{11}^\perp,\\
e_{12} &= \cos \Bigl(\phi+\frac{2\pi}{n}\Bigr) e_{11} + \sin\Bigl(\phi+\frac{2\pi}{n}\Bigr) e_{11}^\perp = -\sin\Bigl(\frac{\pi}{n}\Bigr) e_{11} + \cos\Bigl(\frac{\pi}{n}\Bigr) e_{11}^\perp,
\end{split}
\end{align}
so that $P_0e_{12}=-e_{n1}$. This yields,
\begin{align*}
F|_{T_2} e_{12} 
& \stackrel{\eqref{defDu}}{=} P_0 U P_0 e_{12} 
= -P_0 U e_{n1} 
\stackrel{\eqref{eq:U1}}{=} - \frac{1}{a} P_0 e_{n1}
 = \frac{1}{a} e_{12} \\
&= Q_1 U e_{n1} = Q_1 U Q_1^{T} e_{12} 
\stackrel{\eqref{defDu}}{=} F|_{T_3} e_{12},
\end{align*}
and hence
\beq
\label{U1U2R1bis}
F|_{T_2} - F|_{T_3} = c\otimes e_{12},
\eeq
for some $c\in\R^2$. Now, using that $e_{i,i+1} = Q_1 e_{i-1,i}$ and that $e_{i+1,i+1} = Q_1 e_{i,i}$, by \eqref{U1U2R1}--\eqref{U1U2R1bis}, we obtain
\begin{align}
\label{RkOdd}
\bigl(F|_{T_i} - F|_{T_{i+1}}\bigl)e_{ii} &= 0,\qquad\text{if $i$ odd},\\
\label{RkEven}
\bigl(F|_{T_i} - F|_{T_{i+1}}\bigl)e_{i,i+1} &= 0,\qquad\text{if $i$ even},
\end{align}
again using the convention that $n+1 = 1$ and $0 = n$.\\

\emph{Step 2: Construction of the deformation in $\R^2 \setminus \Omega_n^E$ and $\Omega_n^I$.} We next extend $F$ to be defined also in $\Omega_n^I$ and in $\R^2 \setminus \Omega_n^E$.
By construction (and in particular by the condition \ref{H4} which just corresponded to the ``flipping''/ ``rotation'' of the inner points), we have that $U\overline{I_nI_1} = R_I\overline{I_nI_1}$ for some $R_I\in SO(2)$. 
 Therefore, $U$ and $R_I$ are compatible across the line parallel to $\overline{I_nI_1},$ that is
$$
U - R_I = b\otimes \overline{I_nI_1}^\perp,
$$
for some $b\in\R^2$. As a consequence,
\beq
\label{RkI}
F|_{T_{2i-1}} = Q_{i-1} U Q_{i-1}^T = R_I + Q_{i-1}b\otimes \overline{I_{i-1}I_i}^\perp,\qquad i=1,\dots,n.
\eeq
We set $F|_{\Omega_n^I}:= R_I$.

We claim that similarly it is possible to deduce the existence of $R_E\in SO(2)$ and $v\in\R^2$ such that 
\beq
\label{RkE}
F|_{T_{2i}}  = R_E + Q_{i-1} v\otimes \overline{E_{i}E_{i+1}}^\perp,\qquad i=1,\dots,n.
\eeq
To infer this, we observe the following:
On the one hand, using the projection of $e_{12}$ onto the basis $\{e_{11},e_{11}^\perp\}$ (see \eqref{eq:lin_comb}) we obtain
\begin{equation}
\label{LengthE1E2}
|\overline{E_iE_{i+1}}|^2 = |l_2e_{11}-l_1e_{12}|^2 = l_2^2|e_{11}-a e_{12}|^2 = l_2^2 \Bigl(1 + a^2 + 2a\sin\left(\frac\pi n\right) \Bigr).
\end{equation}
On the other hand, using \eqref{eq:lin_comb} again, we have
\begin{equation}
\label{LengthE1E2bis}
\begin{split}
|P_0 U P_0 (l_2e_{11}-l_1e_{12})|^2 
&= l_2^2\Bigl|\Bigl(a + \sin\left(\frac\pi n \right)\Bigr)e_{11}- \cos\left( \frac\pi n \right) e_{11}^\perp\Bigr|^2 \\
&= l_2^2 \Bigl(1 + a^2 + 2a\sin \left(\frac\pi n \right) \Bigr).
\end{split}
\end{equation}
Combining both observations, we deduce the claim in \eqref{RkE} and define $F|_{\R^2 \setminus \Omega_n^E}:= R_E$.\\

\emph{Step 3: Overall compatibility and conclusion.}
Since the constructed tensor field $F:\R^2 \rightarrow \R^{2\times 2}$ is piecewise constant and \eqref{RkOdd}--\eqref{RkE} hold, we have that $\nabla\times F = 0$. Therefore, the fact that $\R^2$ is simply connected and \cite[Thm. 2.9]{GiraultRaviart}, imply the existence of a deformation $u\in W_{loc}^{1,\infty}(\R^2;\R^2)$ such that $F= \nabla u$ and such that $R_E^T u$ satisfies the conditions \ref{H1}--\ref{H4}. 
\end{proof}

\subsection{Iteration of layers}
\label{sec:iteration_layer}

In the sequel, motivated by experimentally observed tripole star structures (see Section \ref{sec:stars}) and by the passage $n\rightarrow \infty$ (see Section \ref{sec:infty}), we seek to iterate the construction from Section \ref{sec:single_layer} (as illustrated in Figure \ref{fig:iterate}) leading to several nested ``onion ring layers'' of the described deformations.

\begin{figure}
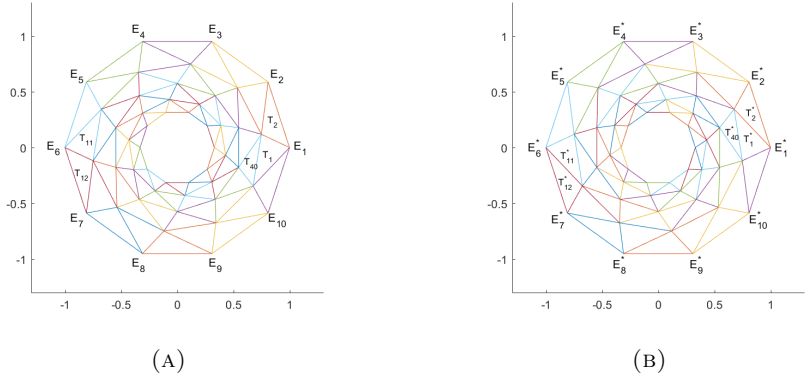

\begin{subfigure}{.5\textwidth}
  \centering
  \includegraphics[width=.99\linewidth,page=6]{figures}
  \caption{}
  \label{fig00001}
\end{subfigure}%
\begin{subfigure}{.5\textwidth}
  \centering
  \includegraphics[width=.99\linewidth,page=7]{figures}
  \caption{}
  \label{fig00002}
\end{subfigure}
\caption{Nested $n-$gons with $n=10$. On the left and on the right, respectively before and after the action of $u$. Here, we denoted by $E^*_i,T^*_i$ the quantities $u(E_i),u(T_i).$ Corollary \ref{cor:iterate} below states that $\nabla u$ is the same in $T_1,T_{11}$ and in $T_{40}$, where we denoted by $T_{40}$ the triangle $T_{40}=r_IQ_\alpha T_{20} = r_IQ_\alpha Q_{-1} T_{2}$.}  
\label{fig:iterate}
\end{figure}

To this end, we now 
\begin{itemize}
\item fix $\alpha>0$, 
\item set for a matter of simplicity $r_E=1$,
\item and take $r_I$ satisfying \eqref{RadRel}. 
\end{itemize}
Let also $u$ satisfying \ref{H1}--\ref{H4} be given by Proposition \ref{prop:suff}. 
Without loss of generality, below we consider $v_n = R_E u_n$, where as in the proof of Proposition \ref{prop:suff}, $R_E$ is such that $\nabla v_n|_{T_1} = U$, and $U$ is as in \eqref{eq:U1} (cf. proof of Proposition \ref{prop:suff}). 
Thus, let $v_n$ defined by
\beq
\label{eq:vn}
v_n(x) = \sum_{k=0}^{N_n} r_I^k R_E R_*^kQ_{\alpha}^k u(r_I^{-k}Q_{-\alpha}^kx)\chi_{\Sigma_k}(x) + R_E x\chi_{\R^2\setminus\Sigma_0^{co}}(x) + R_ER_*^{N_n+1}x\chi_{\overline\Sigma_{N_n+1}^{co}}(x),
\eeq
where $\chi_B$ is the indicator function on the set $B$, $R_*$ is as in \ref{H4}, $Q_{\alpha}$ is a rotation of angle $\frac{2\pi}{n}\alpha$ and the sets $\Sigma_i$ are defined by
$$
\Sigma_i:=\{x\in\R^2: |x|\in r_I^iQ_{\alpha}^i\Omega_n\}.
$$
We added a subscript $n$ to $v$ in order to highlight that $r_I$ as much as $R_*$ and $u$ depend on $n$. For simplicity, the positive integer $N_n$ is chosen such that $N_n := \inf\{N\in\mathbb{N}: r_I^N\leq \frac12\}$. 

In this section, we now seek to understand the properties of these iterated deformations. In particular, a priori, it is not obvious that the deformation $v_n$ satisfies the same differential inclusion 
\begin{align}
\label{eq:inclu_phase}
\nabla v_n(x) \in \bigcup\limits_{P \in \mathcal{P}_n} SO(2) P U P^T,
\end{align}
as the the deformation gradient $\nabla u$ from the individual layers (as constructed in Proposition \ref{prop:suff}) and with $\mathcal{P}_n$ as in \eqref{eq:symmetrygroup}. If the inclusion \eqref{eq:inclu_phase} were to hold, it would imply that $v_n$ corresponds to an exactly stress-free deformation associated with a phase transformation with associated symmetry group $\mathcal{P}_n$. However, it will turn out that while \eqref{eq:inclu_phase} is true on each individual ``onion ring layer'' for some suitable $U$, it is no longer true for the overall  concatenated construction.

In order to observe this, we first note that the map $u:\Omega_n \rightarrow \R^2$ constructed in Proposition \ref{prop:suff} is highly symmetric.

\begin{cor}
\label{cor:iterate}
Let $a>0,$ $\alpha\in(0,1)$, $r_E,r_I>0$ satisfy \eqref{RadRel}--\eqref{EqA}. Then the map $u$ constructed in Proposition \ref{prop:suff} satisfies
\begin{align}
\label{eq:i}
\begin{split}
\nabla u|_{T_i} &= \nabla u|_{Q_{\frac{n-1}{2}}T_{i+1}}^T,\qquad\text{if $n$ odd},\\
\nabla u|_{T_i} &= \nabla u|_{Q_{\frac{n}{2}}T_i},\qquad\text{if $n$ even},
\end{split}
\end{align}
for any $i=1,\dots,n$, and where $Q_j$ is the rotation of angle $\frac{2\pi j}{n}.$ Furthermore,
\beq
\label{IterPos}
R_*Q_{\alpha}\nabla u|_{T_{i+1}}Q_{\alpha}^T = Q_{1} \nabla u|_{T_i} Q_{1}^T, \qquad\text{for any odd $i=1,\dots,n$ }.
\eeq
\end{cor}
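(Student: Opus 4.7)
The strategy is to work directly with the explicit formula \eqref{defDu} for $\nabla u|_{T_j}$ from the proof of Proposition~\ref{prop:suff}, together with the fact that all relevant rotations ($Q_{(i-1)/2}, Q_\alpha, R_*, Q_1, Q_{(n\pm 1)/2}, Q_{n/2}$) commute as planar rotations. After factoring out the common rotation depending on the parity of $i$, both parts of the corollary reduce to a single matrix identity between $U$, $P_0 U P_0$, and specific rotations.

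For \eqref{eq:i}, the $n$ even case is essentially free: $Q_{n/2}$ is rotation by $\pi$, so $Q_{n/2} = -\Id$, which conjugates trivially; and $Q_{n/2} T_i = T_{i+n}$ has the same parity as $T_i$, so \eqref{defDu} directly gives $\nabla u|_{T_i} = \nabla u|_{T_{i+n}}$. For $n$ odd, $Q_{(n-1)/2} T_{i+1} = T_{i+n}$ has the \emph{opposite} parity to $T_i$, and after cancellation the claim reduces to
\beq
\label{eq:key1}
U = Q_{(n-1)/2}\, (P_0 U P_0)^T\, Q_{(n-1)/2}^T.
\eeq
I would prove \eqref{eq:key1} spectrally. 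Using $e_{n1} = \cos\phi\, e_{11} + \sin\phi\, e_{11}^\perp$ and $\phi = \pi/2 - \pi/n$, a short computation from \eqref{eq:U1} shows that $e_{11}, e_{n1}$ are eigenvectors of $U$ with eigenvalues $a, a^{-1}$. Dualising using $P_0 e_{11} = e_{11}$ and $P_0 e_{n1} = \cos\phi\, e_{11} - \sin\phi\, e_{11}^\perp$, the matrix $(P_0 U P_0)^T$ has eigenvectors $\sin\phi\, e_{11} + \cos\phi\, e_{11}^\perp$ (eigenvalue $a$) and $e_{11}^\perp$ (eigenvalue $a^{-1}$). Since $Q_{(n-1)/2}$ is rotation by $\pi/2 + \phi$, it sends these two vectors to $-e_{11}$ and $-e_{n1}$ respectively; consequently both sides of \eqref{eq:key1} share the same spectral data, which forces equality. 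The case $i$ even reduces to the same identity after taking transposes.

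For \eqref{IterPos}, the factor $Q_{(i-1)/2}$ cancels and $R_* Q_\alpha$ simplifies to a rotation of angle $\frac{2\pi}{n}(1-\alpha)$, so the claim reduces to
\beq
\label{eq:key2}
Q_{-\alpha}\, (P_0 U P_0)\, Q_{1-\alpha} = U.
\eeq
Because the left-hand side is not a conjugation, I see no clean spectral shortcut, and I would verify \eqref{eq:key2} by direct matrix computation in the basis $\{e_{11}, e_{11}^\perp\}$, in which $U = \bigl(\begin{smallmatrix} a & c\\ 0 & a^{-1}\end{smallmatrix}\bigr)$ with $c = (a^{-1}-a)\tan(\pi/n)$. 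Matching the four entries of both sides, the vanishing of the $(2,1)$ entry is equivalent to $c = a\cot\bigl(\tfrac{2\pi(1-\alpha)}{n}\bigr) - a^{-1}\cot\bigl(\tfrac{2\pi\alpha}{n}\bigr)$, which via the sum-to-product identity $\sin X - \sin Y = 2\cos(\tfrac{X+Y}{2})\sin(\tfrac{X-Y}{2})$ simplifies to $\sin\bigl(\tfrac{2\pi(1-\alpha)}{n}\bigr) = a^2\sin\bigl(\tfrac{2\pi\alpha}{n}\bigr)$, which is precisely \eqref{EqA}. The remaining three entries then reduce algebraically to the same relation.

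The main obstacle is the direct verification of \eqref{eq:key2}: while \eqref{eq:key1} has a clean spectral proof, \eqref{eq:key2} requires a somewhat tedious trigonometric expansion, with the collapse to exactly the defining relation \eqref{EqA} being the satisfying key observation.
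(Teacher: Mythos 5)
Your argument is correct and matches the paper's structure: both proofs reduce each symmetry claim to the same key matrix identities, namely $Q_{\frac{n-1}{2}}(P_0UP_0)^T Q_{\frac{n-1}{2}}^T = U$ (equivalently $Q_{\frac{n-1}{2}}P_0UP_0Q_{\frac{n-1}{2}}^T = U^T$) for the odd-$n$ case of \eqref{eq:i}, and $P_0UP_0 = Q_\alpha U Q_{1-\alpha}^T$ for \eqref{IterPos}. Where you diverge is the verification of the first identity: the paper rewrites $Q_{\frac{n-1}{2}} = -Q_{\frac{1}{2}}^T$ and verifies by explicitly tracking how $Q_{\frac{1}{2}}^T$ acts on the basis $\{e_{11}, e_{11}^\perp\}$, while you identify $e_{11}$ and $e_{n1}$ as eigenvectors of $U$ with eigenvalues $a, a^{-1}$ (which is precisely the content of \eqref{Eq0Nec}), track how $P_0$, transposition, and $Q_{\frac{n-1}{2}}$ move the eigenframe, and conclude by uniqueness of the spectral decomposition. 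Your spectral argument is conceptually cleaner and makes the geometric content visible, but it does require the eigenvalues to be distinct -- i.e. $a\neq 1$, which holds precisely when $\alpha\neq\frac12$; you should note that the $\alpha=\frac12$ case is degenerate (then $U=\Id$) so the identity is trivial there. For \eqref{IterPos} both you and the paper (in Appendix \ref{AppCode}) resort to a direct entrywise verification; the paper works through each of the four entries with sum-to-product/double-angle identities, whereas you observe that the vanishing of the $(2,1)$-entry collapses to the defining relation \eqref{EqA} and assert the remaining three entries follow algebraically. That assertion is in fact correct -- the $(1,1)$ and $(2,2)$ entries reduce to the same relation $c\sin s = a(\cos s - \cos t)$ already extracted from the $(2,1)$-entry, and the $(1,2)$-entry reduces to the algebraic identity $\sin^2 t\cos s - \sin^2 s\cos t = (\cos s-\cos t)(1+\cos s\cos t)$ -- but since this is the ``satisfying key observation'' you highlight, a sentence of verification of the other entries would make the argument self-contained rather than asserted.
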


\begin{rmk}
\label{rmk:iterate}
Let us comment on the observations in Corollary \ref{cor:iterate}.
\begin{itemize}
\item[(i)] We first consider the identities in \eqref{eq:i}. These describe a symmetry of the constructed deformation gradients in each individual ``onion ring''. Depending on whether $n$ is even or odd, the deformation gradients in triangles which are ``opposite'' to each other (i.e. on $T_i$ and $Q_{\frac{n-1}{2}} T_{i+1}$ if $n$ is odd, or in $T_i$ and $Q_{\frac{n}{2}} T_i$ if $n$ is even) are related by either transposition or are directly equal (see Figure \ref{fig:iterate}).
\item[(ii)] Next, the condition in \eqref{IterPos} compares two adjacent deformations in two different but consecutive layers. The right hand side corresponds to a deformation in a triangle $T_i$ of the outer onion ring, while the left hand side corresponds to the deformation in the inner onion ring (see the definition \eqref{eq:vn} for $v_n$). The expression in \eqref{IterPos} thus states that these two adjacent deformation gradients have the same value (see Figure \ref{fig:iterate}).
\end{itemize}
\end{rmk}

\begin{proof}
In order to prove the first statement we notice that, if $n$ is even, $Q_{\frac{n}{2}}$ is a rotation by $\pi$, and therefore by \eqref{defDu} the claim follows. Let us hence assume that $n$ is odd. By symmetry we can prove the claim by assuming $i=1$, that is we need to prove that
\beq
\label{claimOdd}
Q_{\frac{n-1}{2}}P_0 U P_0 Q_{\frac{n-1}{2}}^T = U^T.
\eeq
But, using that $Q_{\frac{n-1}{2}} = - Q_{\frac{1}{2}}^T$,
$$
Q_{\frac{n-1}{2}}P_0 U P_0 Q_{\frac{n-1}{2}}^T = Q_{\frac12}^T\bigl(a e_{11}\otimes e_{11} + a^{-1}e_{11}^\perp\otimes e_{11}^\perp - \tan\frac{\pi}{n}(a^{-1}-a)e_{11}\otimes e_{11}^\perp\bigr) Q_{\frac12},
$$
and exploiting the fact that
$$
Q_{\frac12}^Te_{11} = \cos\left(\frac{\pi}{n}\right) e_{11} - \sin\left(\frac{\pi}{n}\right) e_{11}^\perp,\qquad Q_{\frac12}^Te_{11}^\perp = \cos\left(\frac{\pi}{n}\right) e_{11}^\perp + \sin\left(\frac{\pi}{n}\right) e_{11},
$$
we deduce \eqref{claimOdd}.

In order to prove \eqref{IterPos}, we can again assume without loss of generality that $i=1$. Then, proving the statement reduces to showing that
$$
R_*Q_{\alpha}P_0 U P_0 Q_{\alpha}^T = Q_{1} U Q_{1}^T,
$$
or, equivalently, that
\begin{align}
 \label{eq:corollary26} 
P_0 U P_0 = Q_{\alpha} U Q_{1-\alpha}^T.
\end{align}
A proof of this equality is given in Appendix \ref{AppCode}; we also refer to the result and argument of the next proposition.
\end{proof}

While Corollary \ref{cor:iterate} implies that the inclusion \eqref{eq:inclu_phase} holds for the outer triangles of the inner onion ring, we next prove that this fails for the inner triangles of the onion ring.

Recalling our definition of $v_n$, in Cartesian coordinates the validity of the iterability of our construction boils down to the question whether
  \begin{align}
  \label{eq:quest}
    R_* Q_{\alpha} \nabla u  Q_{-\alpha} \in \bigcup_{P \in \mathcal{P}_n} SO(2) P U P^T.
  \end{align}
  In the following we show that this condition can not be exactly satisfied with
  our choice of symmetry group $\mathcal{P}_n$ unless $\alpha=\frac{1}{2}$, in
  which case the construction is trivial.
  More precisely, we show that for $\alpha \neq \frac{1}{2}$, the inclusion \eqref{eq:quest} can only hold for either the outer or the inner triangles of the iterated ring. Additionally, we give a second proof of \eqref{IterPos}.
  
  \begin{prop}
  \label{prop:non_iterable}
    Let $\alpha \in (0,1), \alpha \neq \frac{1}{2}$, then there exists a level set
    of the gradient of $u$ in the first iterated ring such that
    \begin{align}
      \label{eq:nonincluded}
      R_* Q_{\alpha} \nabla u  Q_{-\alpha} \not \in \bigcup_{P \in \mathcal{P}_n} SO(2) P U P^T.
    \end{align}
    Moreover, the inclusion \eqref{eq:quest} holds for the outer triangles of the inner onion ring.
  \end{prop}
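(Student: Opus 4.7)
The plan is to handle the two assertions separately, with the outer-triangle inclusion simultaneously providing the promised second proof of \eqref{IterPos}. By rotational symmetry (Corollary \ref{cor:iterate}), it is enough to inspect the iterated gradient on two representative level sets: the image of $T_2$ (where $\nabla u = P_0 U P_0$) and the image of $T_1$ (where $\nabla u = U$). For the outer triangle I would substitute the identity $P_0 U P_0 = Q_\alpha U Q_{\alpha-1}$ from \eqref{eq:corollary26} together with $R_* = Q_{1-2\alpha}$ (condition \ref{H4}) and compute directly
\begin{equation*}
R_* Q_\alpha (P_0 U P_0) Q_{-\alpha} = Q_{1-2\alpha} Q_\alpha \bigl(Q_\alpha U Q_{\alpha-1}\bigr) Q_{-\alpha} = Q_1 U Q_{-1},
\end{equation*}
which lies in $\bigcup_{P\in\mathcal{P}_n} SO(2)\,P U P^T$ because $Q_1 \in \mathcal{R}_1^n \subset \mathcal{P}_n$.

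For the inner triangle the analogous manipulation only simplifies to $R_* Q_\alpha U Q_{-\alpha} = Q_{1-\alpha} U Q_{-\alpha}$, and the task is to show this is not of the desired form. The strategy is to pass to the right Cauchy--Green tensor $M := U^T U$: the ambient $SO(2)$ factor is invisible to $(\cdot)^T(\cdot)$, so the inclusion is equivalent to the existence of $P \in \mathcal{P}_n$ with $Q_\alpha M Q_{-\alpha} = PMP^T$, i.e., with $Q_{-\alpha} P \in \mathrm{Stab}_{O(2)}(M)$.

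The key step is to identify $\mathrm{Stab}_{O(2)}(M)$ explicitly. For $\alpha \neq 1/2$, formula \eqref{EqA} gives $a \neq 1$, so $M$ has distinct eigenvalues and $\mathrm{Stab}_{O(2)}(M)$ is a Klein four-group $\{I, -I, R_v, R_{v^\perp}\}$ generated by reflections across the principal axes of $M$. To locate these axes, apply $(\cdot)^T(\cdot)$ to \eqref{eq:corollary26} and use $P_0^2 = I$ to obtain $P_0 M P_0 = Q_{1-\alpha} M Q_{\alpha-1}$, equivalently $(Q_{\alpha-1}P_0)\,M\,(Q_{\alpha-1}P_0)^T = M$. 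Since $Q_{\alpha-1} P_0$ equals the reflection $R_{\pi(\alpha-1)/n}$ across the axis at angle $\pi(\alpha-1)/n$ from $e_{11}$ (by the composition rule $Q(\psi) P_0 = R_{\psi/2}$), this pins the principal axes of $M$ to the directions $\pi(\alpha-1)/n$ and $\pi(\alpha-1)/n + \pi/2$.

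Finally I would enumerate the four candidates $P \in \{Q_\alpha,\, -Q_\alpha,\, Q_\alpha R_v,\, Q_\alpha R_{v^\perp}\}$, which are (respectively) rotations by $2\pi\alpha/n$ and $\pi + 2\pi\alpha/n$ and reflections at axes $\pi(2\alpha-1)/n$ and $\pi(2\alpha-1)/n + \pi/2$ (via $Q(\psi) R_\theta = R_{\theta + \psi/2}$). Recalling that $\mathcal{R}_1^n$ contains only rotations by $2\pi j/n$ and $\mathcal{R}_2^n$ only reflections at angles $-\pi j/n$ modulo $\pi$, membership of any candidate in $\mathcal{P}_n = \mathcal{R}_1^n \cup \mathcal{R}_2^n$ reduces in each case to an integer congruence of the form $2\alpha \in \mathbb{Z}$ (possibly shifted by $n/2$), whose only solution with $\alpha \in (0,1)$ is $\alpha = 1/2$; hence no admissible $P$ exists and \eqref{eq:nonincluded} holds. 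The main technical obstacle is the identification of $\mathrm{Stab}(M)$ through the algebraic identity \eqref{eq:corollary26}; once the principal axes are pinned down, the residual case check is elementary arithmetic on $\pi/n$-multiples of angles.
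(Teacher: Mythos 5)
Your strategy is essentially the paper's: reduce to Cauchy--Green tensors via Lemma \ref{lem:reduction}, use that $U^TU$ has distinct eigenvalues for $\alpha\neq\frac12$ so that its $O(2)$-stabiliser is $\{\pm \Id, R_v,R_{v^\perp}\}$, and convert the inclusion into angle congruences. Your two deviations are legitimate and in fact streamline parts of the argument: the outer-triangle inclusion follows immediately from \eqref{eq:corollary26} via $R_*Q_\alpha(P_0UP_0)Q_{-\alpha}=Q_1UQ_1^T$, and the principal axes of $M=U^TU$ are located by observing that \eqref{eq:corollary26} forces the reflection $Q_{\alpha-1}P_0$ to stabilise $M$, which replaces the paper's explicit trigonometric eigenvector verification. (Note, however, that because both steps presuppose \eqref{eq:corollary26}, you do not obtain the advertised independent ``second proof'' of \eqref{IterPos}; the paper's eigenvector computation is precisely what makes its version self-contained on that point.)

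The genuine gap is in your final enumeration. For the candidate $P=Q_\alpha R_{v^\perp}$, i.e.\ the reflection across the axis at angle $\frac{\pi(2\alpha-1)}{n}+\frac{\pi}{2}$, membership in the dihedral group requires $2\alpha-1+\frac n2\in\Z$; for $n$ even this indeed forces $\alpha=\frac12$, but for $n$ odd it is solved by $\alpha\in\{\frac14,\frac34\}$, so the claim that every case reduces to ``$2\alpha\in\Z$, hence $\alpha=\frac12$'' fails there. This is not a cosmetic slip: for $n=3$, $\alpha=\frac14$ one has $a^2=2$, $U_1^TU_1=\left(\begin{smallmatrix}2 & -\sqrt{3}\\ -\sqrt{3} & 2\end{smallmatrix}\right)$, and a direct check gives $Q(\tfrac{\pi}{6})\,U_1^TU_1\,Q(-\tfrac{\pi}{6})=P\,U_1^TU_1\,P$ with $P=\left(\begin{smallmatrix}-1/2 & \sqrt{3}/2\\ \sqrt{3}/2 & 1/2\end{smallmatrix}\right)=\diag(1,-1)Q(\tfrac{4\pi}{3})\in\hat{\mathcal P}_3$ (the reflection across the line at angle $\frac{\pi}{3}$); by Lemma \ref{lem:reduction} the inclusion \eqref{eq:quest} then also holds on the inner triangles, so the asserted non-inclusion cannot be derived along these lines at those parameter values. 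You are in good company: the paper's own proof makes the same omission, since in deriving \eqref{eq:incl1c} it requires the conjugated reflection to equal $\diag(1,-1)$ (angle in $\pi\Z$) and overlooks the admissible alternative $\diag(-1,1)$ (angle in $\frac{\pi}{2}+\pi\Z$), which is exactly the branch producing the exceptional odd-$n$ cases above. To complete your argument you must either treat this branch separately (excluding odd $n$ with $\alpha\in\{\frac14,\frac34\}$ from the claim) or supply an additional argument for it; as written, the concluding ``elementary arithmetic'' step is false.
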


  \begin{proof}
    We note that the inclusion problem \eqref{eq:nonincluded} can be
    equivalently phrased in terms of the Cauchy-Green tensors.
    A self-contained proof of this reduction is provided in Lemma \ref{lem:reduction}.    
  Using the explicit structure of $\nabla u$ given in equation \eqref{defDu} and
  that $R_j \in \mathcal{R}_1^n\subset \mathcal{P}_n$, it thus suffices to consider
  two triangles $T_0,T_1$ and the inclusion problems
  \begin{align}
    \label{eq:reducedinclusion}
    \begin{split}
    Q_{\alpha} U^T U Q_{-\alpha} &\in \bigcup_{P \in \mathcal{P}_n} P^T U^T U P, \\
    Q_{\alpha} P_0U^T U P_0 Q_{-\alpha} &\in \bigcup_{P \in \mathcal{P}_n} P^T U^T U P,
    \end{split}
  \end{align}
  where
     \begin{align*}
     P_0 = P_0^T =
    \begin{pmatrix}
      e_{11} & e_{11}^\perp
    \end{pmatrix}
               \begin{pmatrix}
                 1 & 0 \\ 0 &-1
               \end{pmatrix}
                           \begin{pmatrix}
      e_{11} & e_{11}^\perp
    \end{pmatrix}^T \in O(2),      
  \end{align*} 
  and
  \begin{align*}
    U=  \begin{pmatrix}
      e_{11} & e_{11}^\perp
    \end{pmatrix}
               \begin{pmatrix}
                 a & \frac{a^{-1}-a}{\tan(\phi)} \\ 0 &\frac{1}{a}
               \end{pmatrix}
                \begin{pmatrix}
      e_{11} & e_{11}^\perp
    \end{pmatrix}^T =: \begin{pmatrix}
      e_{11} & e_{11}^\perp
    \end{pmatrix}
               U_1
                \begin{pmatrix}
      e_{11} & e_{11}^\perp
    \end{pmatrix}^T  .
  \end{align*}
  Furthermore, we may change our basis from the canonical unit basis to the basis $(e_{11}, e_{11}^\perp)$ and equivalently
  express \eqref{eq:reducedinclusion} as
  \begin{align}
    \label{eq:incl1}
 Q_{\alpha} U_1^T U_1 Q_{-\alpha} &\in \bigcup_{P \in \hat{\mathcal{P}}_n} P^T U_1^T U_1 P, \\
 \label{eq:incl2} Q_{\alpha} \diag(1,-1) U_1^T U_1 \diag(1,-1) Q_{-\alpha} &\in \bigcup_{P \in \hat{\mathcal{P}}_n} P^T U_1^T U_1 P,
  \end{align}
  where
  \begin{align}
  \label{eq:hatP}
    \hat{\mathcal{P}}_n= \mathcal{R}_1^n \cup  \begin{pmatrix}
      1 & 0 \\ 0 & -1
    \end{pmatrix} \mathcal{R}_1^n=: \hat{\mathcal{R}}_1^n \cup \hat{\mathcal{R}}_2^n
  \end{align}
  is the standard dihedral group.
  We note that
  \begin{align}
      U_1^T U_1 &=
    \begin{pmatrix}
      a^2 & \frac{1-a^2}{\tan(\phi)} \\
       \frac{1-a^2}{\tan(\phi)} & \frac{1}{a^2} + \left( \frac{a^{-1}-a}{\tan(\phi)} \right)^2
     \end{pmatrix},
  \end{align}
  is a symmetric matrix with determinant one and eigenvalues $\lambda,
  \lambda^{-1}$, which are distinct if and only if $\alpha \neq \frac{1}{2}$.
  Thus, there exists a rotation $R_{\varphi}$
  such that
  \begin{align}
    U_1^T U_1= Q_{-\varphi}\diag(\lambda,\lambda^{-1}) Q_\varphi.
  \end{align}
  Expressing \eqref{eq:incl1} and \eqref{eq:incl2} with respect to this diagonal matrix, we thus obtain the requirement that
  $\diag(\lambda,\lambda^{-1})=Q^T \diag(\lambda,\lambda^{-1})Q$ for a suitable
  $Q=Q(P,\alpha,\varphi) \in O(2)$ of the structure given below.
  Since we assume that $\lambda\neq \lambda^{-1}$ it follows that $Q$ has to map
  the eigenvectors $v_1,v_2$ of $U_1^T U_1$ to $\pm v_1,\pm v_2$ and thus 
  \eqref{eq:incl1} and \eqref{eq:incl2} are satisfied if and only if there exist
  $P \in \hat{\mathcal{P}}_n$ such that:
  \begin{align}
    \label{eq:incl1b}
    Q_{\varphi} P Q_{\alpha}Q_{-\varphi} \in \{Id, -Id, \diag(1,-1),\diag(-1,1)\}, \\
    \label{eq:incl2b}
    Q_{\varphi} P Q_{\alpha} \diag(1,-1)Q_{-\varphi} \in \{Id, -Id, \diag(1,-1),\diag(-1,1)\},
  \end{align}
  respectively.
  We first consider \eqref{eq:incl1b} and note that if $P=Q_j \in \hat{\mathcal{R}}_{1}^n$ with $j\in\{1,\dots,n\}$, the
  left-hand-side reduces to $Q_{j+\alpha}\in \{Id,-Id\}$, which is never
  satified since $\alpha \in
  (0,1)$.
  If instead $P=\diag(1,-1) Q_j$ for $j\in\{1,\dots,n\}$, then
  \begin{align}
  \begin{split}
    Q_\varphi P Q_{\alpha}Q_ {-\varphi}
    &= Q_{\varphi}\diag(1,-1) Q_{j+\alpha} Q_{-\varphi}\\ 
    &= Q_{\varphi-\frac{j+\alpha}{2}} \diag(1,-1)Q_{-\varphi+\frac{j+\alpha}{2}} \in \{\diag(1,-1),\diag(-1,1)\},
    \end{split}
  \end{align}
  if and only if
  \begin{align}
    \begin{split}
      \frac{2\pi}{n} \left(-\varphi+\frac{j+\alpha}{2}\right) \in \pi \Z \\
      \Leftrightarrow j+\alpha -2\varphi \in n\Z 
    \label{eq:incl1c} \Leftrightarrow \alpha-2\varphi \in \Z.
    \end{split}
  \end{align}
  We will later compute $\varphi$ to show that this condition is satisfied iff $\alpha =\frac{1}{2}$.
  Before proceeding to this, let us however also consider the second inclusion \eqref{eq:incl2b}.
  If $P=\diag(1,-1)Q_j \in \hat{\mathcal{R}}_n^1$ for some $j\in\{1,\dots,n\}$, the left-hand-side of \eqref{eq:incl1b} reduces to
  \begin{align*}
    Q_\varphi Q_{-j-\alpha}\diag(1,-1) \diag(1,-1) Q_{-\alpha} = Q_{-j-\alpha} \not \in \{Id,-Id\}.
  \end{align*}
  If instead $P=Q_j$ for some $j\in\{1,\dots,n\}$, we obtain
  \begin{align}
  \label{eq:sec_cond}
  \begin{split}
&    Q_\varphi Q_{j+\alpha}\diag(1,-1) Q_{-\varphi} \\
    &= Q_{\varphi}Q_{(j+\alpha)/2} \diag(1,-1)Q_{-(j+\alpha)/2} Q_{-\varphi} \in \{\diag(1,-1),\diag(-1,1)\},
    \end{split}
  \end{align}
  if and only if
  \begin{align}
   \begin{split}
      \frac{2\pi}{n} \left(-\varphi-\frac{j+\alpha}{2}\right) \in \pi \Z \\
      \Leftrightarrow -j-\alpha -2\varphi \in n\Z 
    \label{eq:incl2c} \Leftrightarrow -\alpha-2\varphi \in \Z.
    \end{split} 
  \end{align}
  In particular, considering the difference of \eqref{eq:incl1c} and
  \eqref{eq:incl2c}, we observe that for both inclusions \eqref{eq:incl1c} and \eqref{eq:incl2c} to be satisfied it is necessary that
  $2\alpha \in \Z$ and thus $\alpha =\frac{1}{2}$. This concludes the proof of
  the first statement of the proposition.

  We additionally show that \eqref{eq:incl2c} is always satisfied for all $\alpha \in (0,1)$ by
  computing $\varphi=\varphi(\alpha)$.
  Indeed, we claim that
  \begin{align}
    v= \left(\cos\left(\frac{2\pi}{n} \frac{\alpha-1}{2}\right),\sin\left(\frac{2\pi}{n} \frac{\alpha-1}{2}\right)\right)
  \end{align}
  is an eigenvector of $U_1^T U_1$.
  Since $\varphi$ was defined by $U_1^T U_1=
  Q_{-\varphi}\diag(\lambda,\lambda^{-1})Q_{-\varphi}^T$, this implies that
  $\varphi=-\frac{\alpha-1}{2}$ and hence \eqref{eq:incl2c} is satisfied.
  It remains to show that $v$ is indeed an eigenvector.
  As we consider two-dimensional matrices, it suffices to show that $U_1^T U_1
  v$ is colinear to $v$ and thus equivalently
  \begin{align}
  \label{eq:eigen}
  \begin{split}
   0 &=  v^T
    \begin{pmatrix}
      0 & -1 \\ 1 & 0
    \end{pmatrix} U_1^T U_1 v \\
     &= \left(\frac{1}{a^2}-a^2+\left( \frac{a^{-1}-a}{\tan(\phi)} \right)^2 \right) \sin\left(\frac{2\pi}{n} \frac{\alpha-1}{2}\right)\cos\left(\frac{2\pi}{n} \frac{\alpha-1}{2}\right) \\
     & \quad + \frac{1-a^2}{\tan(\phi)} \left(\cos^2\left(\frac{2\pi}{n} \frac{\alpha-1}{2}\right)-\sin^2\left(\frac{2\pi}{n} \frac{\alpha-1}{2}\right) \right) \\
     &= \left(\frac{1}{a^2}-a^2+\left( \frac{a^{-1}-a}{\tan(\phi)} \right)^2 \right) \frac{1}{2} \sin\left(\frac{2\pi}{n} (\alpha-1)\right) \\
    & \quad + \frac{1-a^2}{\tan(\phi)}\cos\left(\frac{2\pi}{n} (\alpha-1)\right).
\end{split}  
  \end{align}
  We recall that by \eqref{EqA}
  \begin{align*}
 &   a^2=\frac{\sin(\frac{2\pi}{n}(1-\alpha))}{\sin(\frac{2\pi}{n}\alpha)}, \
    \tan(\phi)= \tan\left(\frac{n-2}{2n}\pi\right)= \cot\left(\frac{\pi}{n}\right), \\
  &  (a^{-1}-a)^2= a^{-2}+a^2-2.
  \end{align*}
  We now note that the equality \eqref{eq:eigen} is satisfied if $a^2=1$ and thus
  $\alpha=\frac{1}{2}$, otherwise we may divide by $(1-a^2)$
  to further reduce to proving
  \begin{align*}
    \quad \left(a^{-2}+1 + (a^{-2}-1)\tan^2\left(\frac{\pi}{n}\right) \right) \frac{1}{2} \sin\left(\frac{2\pi}{n}(\alpha-1)\right) + \cos\left(\frac{2\pi}{n}(\alpha-1)\right) \tan\left(\frac{\pi}{n}\right) =0.
  \end{align*}
  For easier notation, we introduce $\gamma=\frac{2\pi}{n}\alpha$,
  $\beta=\frac{2\pi}{n}(\alpha-1)=\gamma-\frac{2\pi}{n}$, and thus $a^{-2}=-\frac{\sin(\gamma)}{\sin(\beta)}$.
  Then the above simplifies to
  \begin{align*}
    \frac{1}{2}\left(-\sin(\gamma)+\sin(\beta) + \left(-\sin(\gamma)-\sin(\beta)\right)\tan^2\left(\frac{\pi}{n}\right)\right) + \cos(\beta)\tan\left(\frac{\pi}{n}\right)=0.
  \end{align*}
  We then insert $\sin(\gamma)=\cos(\beta)\sin\left(\frac{2\pi}{n}\right)+
  \sin(\beta)\cos\left(\frac{2\pi}{n}\right)$ and collect terms involving $\cos(\beta)$
  and $\sin(\beta)$: 
  \begin{align*}
    \cos(\beta) \left(-\frac{1}{2}\left(\sin\left(\frac{2\pi}{n}\right)+ \sin\left(\frac{2\pi}{n}\right)\tan^2\left(\frac{\pi}{n}\right)\right)+\tan\left(\frac{\pi}{n}\right)\right) \\
 + \sin(\beta)\frac{1}{2}\left(-\cos\left(\frac{2\pi}{n}\right)+1-\left(\cos\left(\frac{2\pi}{n}\right)+1\right)\tan^2\left(\frac{\pi}{n}\right)\right)=0.
  \end{align*}
  In order to observe that this is indeed correct, we
 use the double-angle identities
$\sin(\frac{2\pi}{n})=2\sin(\frac{\pi}{n})\cos(\frac{\pi}{n})$
and $\cos(\frac{2\pi}{n})=\cos^2(\frac{\pi}{n})-\sin^2(\frac{\pi}{n})$ to
obtain that
\begin{align*}
  & \quad -\frac{1}{2}\left(\sin\left(\frac{2\pi}{n}\right)+ \sin\left(\frac{2\pi}{n}\right)\tan^2\left(\frac{\pi}{n}\right)\right) + \tan\left(\frac{\pi}{n}\right) \\
  &= - \left(\sin\left(\frac{\pi}{n}\right)\cos\left(\frac{\pi}{n}\right)+ \sin\left(\frac{\pi}{n}\right)\cos\left(\frac{\pi}{n}\right)\left(\frac{1}{\cos^2\left(\frac{\pi}{n}\right)}-1\right)\right) + \frac{\sin(\frac{\pi}{n})}{\cos(\frac{\pi}{n})}=0, 
\end{align*}
as well as
\begin{align*}
  & \quad -\cos\left(\frac{2\pi}{n}\right)+1 - \left(\cos\left(\frac{2\pi}{n}\right)+1\right)\tan^2\left(\frac{\pi}{n}\right) \\
  &= -\cos^2\left(\frac{\pi}{n}\right)+\sin^2\left(\frac{\pi}{n}\right)+1 - \left(\cos^2\left(\frac{\pi}{n}\right)-\sin^2\left(\frac{\pi}{n}\right)+1\right)\tan^2\left(\frac{\pi}{n}\right) \\ 
  &= 2 \sin^2\left(\frac{\pi}{n}\right) - 2 \cos^2\left(\frac{\pi}{n}\right)\tan^2\left(\frac{\pi}{n}\right)=0.
\end{align*}
  This concludes the proof. 
\end{proof}

\subsubsection{Remarks on geometrically non-linear tripole star constructions}
\label{sec:stars}

\begin{figure}[t]
\centering
\includegraphics[width=9cm,page=8]{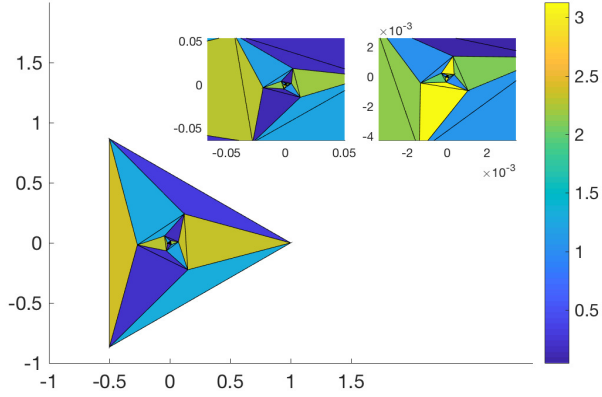}
\caption{
Exact construction of a self-similar tripole star obtained by solving (\ref{IterPos}). Here we set $\alpha=0.47$. The colormap represents 
the eigenvector associated with the largest eigenvalue of the right Cauchy–Green tensor $\nabla u^T\nabla u$ which is parametrised as $(\cos\theta,\sin\theta,0)$. 
The right tensor is rotated by
an angle equal to $\frac{2\pi}{3} \alpha$ when moving across each hierarchy in the onion construction. Each layer consists of a sharp $\frac{\pi}{3}$ rotation
dictated by the symmetry of the problem and an additional small rotation of amplitude equal to
$\frac{\pi}{3}|1-2\alpha|$ which 
is required by compatibility
 and which
causes additional rotational stretch
(see in-plot magnifications).
}
\label{1902201704}
\end{figure}

Exact solutions obtained for $\alpha\approx 1/2$ as in Figure \ref{1902201704} display self-similar “nested” structures. These are reminiscent of ``tripole star structures'' -- a distinctive
type of patterns which are observed in a class of metal alloys undergoing the (three-dimensional) hexagonal-to-orthorhombic transition in the plane \cite{MA80,MA80a,KKK88,KK91}, a transformation characterised by three martensitic variants with special rotational symmetries.
Investigation of these types of microstructures (typically in two-dimensional models of the hexagonal-to-orthorhombic transformation) has been object of extensive numerical studies based on the minimisation of stored energies defined in both fully non-linear and linearised elasticity for the hexagonal-to-orthorhombic transformation (see, for instance \cite{CKOOKI07, WWC99,JCD04,CJ01,PL13} and the references quoted therein). Ultimately, in many of these works minimisation boils down to solving the associated differential inclusion problem (of the form \eqref{eq:ex_stress_free}), for a piecewise affine vector $u:\Omega\to\R^2$ to be taken over a domain $\Omega$ and with boundary conditions that are suitable to reproduce the tripole stars. 

In the experimental literature on the hexagonal-to-orthorhombic phase transformation, it is noted that the observed star patterns are of low but not of vanishing energy, in the sense that they are not exactly stress-free within the geometrically non-linear theory of elasticity. The experimental literature describes these structures as disclinations. This is in accordance with our results from the previous sections stating that
\begin{itemize}
\item[(i)] a single, exactly stress-free layer of a tripole star deformation can not be achieved with three variants of martensite, but requires six variants,
\item[(ii)] an iteration of the individual layers is not possible with only three (or six) variants of martensite. Already in the second layer, this will lead to misfits (which give rise to the experimentally observed stresses). In \cite{DV76}, for instance, the authors report a deviation of the outer-most and the second inner iteration by roughly four degrees.
\end{itemize}
As also observed in the literature \cite{KK91} this is a \emph{geometrically non-linear} effect. Indeed, by introducing the geometrically linearised elasticity version of the (two-dimensional) hexagonal-to-orthorhombic phase transformation, an exact construction of a self-similar tripole star pattern has been obtained
in \cite{CPL14} by imposing kinematic compatibility across each interface and by defining a displacement field that reproduces the three martensitic variants associated with the hexagonal-to-orthorhombic transformation. The symmetry and rigidity of the problem is inherited in the shape of the microstructure in that the tripole stars are obtained by rotating, rescaling and translating a copy of a single kite-shaped polygon which is perfectly symmetric with respect to its axes.

The results of Section \ref{sec:nonlinear_n_well} generalise the linearised construction of \cite{CPL14} in the following way. By replacing the non-linear differential inclusion associated with the hexagonal-to-orthorhombic transformation with (\ref{IterPos}) which involves extra rotations (and reflections) of the bain strain matrices and therefore more flexibility, it is possible to construct exact tripole stars by matching rotated and dilated copies of slightly non-symmetrical tetrahedra and to quantify the deviation from the perfectly symmetric construction of the linearised case. Thanks to \eqref{eq:i} we can estimate from above the nonlinear elastic mismatch in one single layer of our construction caused by having just three martensitic variants (hexagonal-to-orthorhombic transformation) rather than six (as in \cite{CKZ17} or Proposition \ref{prop:suff}). Indeed, this can be bounded from above by (cf. Section \ref{sec:number_wells})
$$
|U-U^T| = \tan\Bigl(\frac\pi n\Bigr)|a-a^{-1}|, 
$$
and so is small whenever $a\approx1$. This small mismatch is not captured by the linear elasticity model.

In order to achieve the matching across every annulus, the deformation field necessarily has to incorporate, at each hierarchy, an additional rotation $Q_{\alpha}$ of an angle equal to $\frac{2\pi}{n}\alpha$ (see also the comment in the caption of Figure \ref{1902201704}). This leads to the presence of elastic energies.

Indeed, it is interesting to view the constructions from an energetic point of view. Setting $K_n(a):= \bigcup\limits_{P\in \mathcal{P}_n} SO(2)P U(a) P^T$ with $U(a)$ as in \eqref{eq:U1}, we consider the energy
\begin{align*}
\mathcal{E}_n(\nabla u) = \int\limits_{\Omega_n} \dist^2(\nabla u,K_n(a) \cup SO(2)) dx + \epsilon |\nabla^2 u|(\Omega_n).
\end{align*}
Here the additional well $SO(2)$ corresponds to the austenite phase. For this energy, the \emph{single layer} deformations from Proposition \ref{prop:suff} are extremely inexpensive: the elastic energy vanishes, while the surface energy is finite. Hence the energy behaves like $C\epsilon$ for some constant $C>0$.
However, \emph{iterated} constructions as in Proposition \ref{prop:n_infty} already cost more: here, by the geometric refinement of the structures, the surface energy still behaves as $C \epsilon$ for some constant $C>0$, while the elastic energy can be estimated by 
\begin{align*}
E_{elast} \leq C \sum\limits_{j=1}^{N_n}r_I^{2j}\dist(Q_{\alpha j}^T \nabla u^T \nabla u Q_{\alpha j}, \bigcup_{j=1}^{2n} U_j^T U_j),
\end{align*}
where $U_j = Q_{\frac{j-1}{2}}U(a)^T U(a) Q_{\frac{j-1}{2}}^T$ if $j$ is odd and $U_j = Q_{\frac{j-2}{2}}U(a)^T U(a) Q_{\frac{j-2}{2}}^T$ if $j$ is even and $U(a)$ is as in \eqref{eq:U1}.
By arguments as in the proof of Proposition \ref{prop:non_iterable} for $\alpha$ close to $\frac{1}{2}$ and $N_n\in \N$ not too large, the total energy thus is controlled by
\begin{align*}
\mathcal{E}_n(\nabla v_n) \leq  C \sum\limits_{j=1}^{N_n}r_I^{2j}\dist(2j\alpha, \Z) + C \epsilon. 
\end{align*}
Hence, we obtain a three parameter minimisation problem, with the parameters $\alpha, \epsilon, N_n$ (where the $N_n$ dependence is mild as the series in $j$ is summable as a geometric series).
In particular, in spite of the presence of stresses, for $\alpha \in (0,1)$ sufficiently close to $\frac{1}{2}$ (depending on $N_n$ and $\epsilon$) there is a regime, in which also in the geometrically non-linear setting, it is feasible that the tripole star structures are observed
and are rather stable.

\subsection{The limit $n\to\infty$} 
\label{sec:infty}

Equipped with the finite $n$ construction from the previous section, in this section, we discuss the passage to the limit as $n\to\infty$. Physically, this corresponds to the nematic liquid crystal elastomer limit.

We begin by discussing the limit of the construction from Proposition \ref{prop:suff}. 
First, due to \eqref{RadRel}, $\frac{r_I}{r_E} = 1-\frac{2\pi\sqrt{\alpha(1-\alpha)}}{n} + O(n^{-2})$ as $n\to\infty$. Therefore the internal radius converges to the external one. Hence, in order to observe a non-trivial limiting configuration as $n\rightarrow \infty$, in the sequel, we iterate more and more layers of our construction for finite $n$ (as discussed in Section \ref{sec:finite_n}). 

Let us explain this in more detail. Without loss of generality, below we consider $v_n := R_E u_n$, where as in the proof of Proposition \ref{prop:suff}, $R_E$ is such that $\nabla v_n|_{T_1} = U$ and where $U$ is as in \eqref{eq:U1} (cf. proof of Proposition \ref{prop:suff}). 
As in Section \ref{sec:iteration_layer}, we now fix $\alpha>0$, set for a matter of simplicity $r_E=1$, take $r_I$ satisfying \eqref{RadRel} and consider
\begin{align*}
v_n(x) = \sum_{k=0}^{N_n} r_I^k R_E R_*^kQ_{\alpha}^k u(r_I^{-k}Q_{-\alpha}^kx)\chi_{\Sigma_k} + R_E x\chi_{\R^2\setminus\Sigma_0^{co}} + R_ER_*^{N_n+1}x\chi_{\overline\Sigma_{N_n+1}^{co}},
\end{align*}
where $\chi_B$ is the indicator function on the set $B$, $R_*$ is as in \ref{H4}, $Q_{\alpha}$ is a rotation of angle $\frac{2\pi}{n}\alpha$ and the sets $\Sigma_i$ are defined by
$$
\Sigma_i:=\{x\in\R^2: |x|\in r_I^iQ_{\alpha}^i\Omega_n\}.
$$
Again, we choose the positive integer $N_n$ such that $N_n := \inf\{N\in\mathbb{N}: r_I^N\leq \frac12\}$.

Below we denote by $B_r$ the open ball centred at zero and of radius $r$. With this notation in hand, we pass to the limit $n\rightarrow \infty$, thus, physically passing to the liquid crystal elastomer regime (see Section \ref{sec:solids_liquids}).

\begin{prop}
\label{prop:n_infty}
Let $\alpha \in (0,1)$, then there exists $v\in W_{loc}^{1,\infty}(\R^2;\R^2)$ such that $v_n\to v$ in the
$W_{loc}^{1,p}(\R^2;\R^2)-$norm for each $p\geq1$, and $\nabla v= Q(\beta_0)$ on $\R^2\setminus B_1$, $\nabla v = Q(\beta_0)
Q(\rho_0 \log\frac12)$ on $B_{1/2}$ and 
$$
\nabla v = Q(\rho_0 \log r) Q(\omega)\bigl(a \bar{e}_{11}\otimes\bar{e}_{11} +a^{-1}\bar{e}_{11}^\perp\otimes\bar{e}_{11}^\perp \bigr)Q^T(\omega), \qquad \text{in $B_1\setminus B_{\frac12}$},
$$
where $Q(s)$ denotes the rotation of angle $s\in\R$,
$\rho_0=\frac{2\alpha-1}{\sqrt{\alpha(1-\alpha)}}$, $\beta_0 = \sin^{-1}(1-2\alpha)$, $r =|x|$ and $\omega =
\arctan\Bigl(\frac{x\cdot e_2}{x\cdot e_1} \Bigr).$ Furthermore, $a =
\sqrt{\frac{1-\alpha}{\alpha}}$ and {$\bar{e}_{11} =
  (\sqrt{1-\alpha},-\sqrt{\alpha}).$}
\end{prop}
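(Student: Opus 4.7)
The plan is to analyse the piecewise affine map $v_n$ layer-by-layer, compute the asymptotics of the various geometric quantities as $n\to \infty$, and then pass to a pointwise limit on each point of $\R^2$, concluding $W^{1,p}_{\mathrm{loc}}$ convergence via dominated convergence.

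\textbf{Step 1: Asymptotics of the geometric quantities.}
From \eqref{EqA} one has $a_n \to \sqrt{(1-\alpha)/\alpha}$; from \eqref{RadRel} one obtains $r_I = 1 - \frac{2\pi\sqrt{\alpha(1-\alpha)}}{n} + O(n^{-2})$, so $\log r_I \sim -\frac{2\pi\sqrt{\alpha(1-\alpha)}}{n}$ and $N_n = \frac{n\log 2}{2\pi\sqrt{\alpha(1-\alpha)}} + O(1)$. Since $\tan \phi_n = \cot(\pi/n) = O(n)$, the shear term in \eqref{eq:U1} is $O(1/n)$, so $U_n \to U_\infty := a\bar e_{11}\otimes\bar e_{11} + a^{-1}\bar e_{11}^\perp\otimes\bar e_{11}^\perp$. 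A direct computation of $\overline{E_1 I_1}$ shows $e_{11}^{(n)} \to \bar e_{11} = (\sqrt{1-\alpha},-\sqrt{\alpha})$. The per-layer rotation $R_*$ has angle $\rho_n = \frac{2\pi(1-2\alpha)}{n}$, and the outer rotation $R_E$, which is determined by the compatibility of $F|_{T_2} = P_0 U_n P_0$ with the outer region along $\overline{E_1 E_2}$, tends to $Q(\beta_0)$ with $\beta_0 = \arcsin(1-2\alpha)$; indeed $U_\infty e_2 = (2\alpha-1, 2\sqrt{\alpha(1-\alpha)})$, a unit vector, which identifies the rotation angle.

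\textbf{Step 2: Pointwise limit on the annulus.}
Fix $x \in B_1\setminus B_{1/2}$ (away from layer boundaries) with $r = |x|$ and angular coordinate $\omega$. For each $n$ there is a unique $k_n = k_n(x)$ with $r_I^{k_n+1} \le r < r_I^{k_n}$. By \eqref{eq:vn},
\[
\nabla v_n(x) = R_E R_*^{k_n} Q_\alpha^{k_n} \nabla u(r_I^{-k_n} Q_{-\alpha}^{k_n} x)\, Q_{-\alpha}^{k_n}.
\]
From \eqref{defDu}, together with the vanishing of the shear, the gradient $\nabla u$ on both odd- and even-indexed triangles is $Q(\omega_y) U_n Q(\omega_y)^T + o(1)$ where $\omega_y$ is the angular coordinate of $y = r_I^{-k_n} Q_{-\alpha}^{k_n} x$. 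Using commutativity of $SO(2)$ and $\omega_y + \frac{2\pi\alpha k_n}{n} = \omega$, one collapses $Q_\alpha^{k_n}Q(\omega_y) = Q(\omega)$ and $Q(\omega_y)^T Q_{-\alpha}^{k_n} = Q(\omega)^T$, so
\[
\nabla v_n(x) = R_E R_*^{k_n} Q(\omega) U_n Q(\omega)^T + o(1).
\]
Since $k_n \sim \log r / \log r_I$, the accumulated rotation satisfies
\[
k_n \rho_n \sim \frac{\log r}{\log r_I} \cdot \frac{2\pi(1-2\alpha)}{n} \longrightarrow \frac{2\alpha-1}{\sqrt{\alpha(1-\alpha)}} \log r = \rho_0 \log r.
\]
Combining this with $R_E \to Q(\beta_0)$ and $U_n \to U_\infty$, and using again that 2D rotations commute, yields the stated formula on the annulus (absorbing the $Q(\beta_0)$ factor as appropriate). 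An analogous computation with $k = 0$ gives $\nabla v = Q(\beta_0)$ on $\R^2 \setminus B_1$, and with $k = N_n + 1$, using $(N_n+1)\rho_n \to \rho_0 \log\tfrac12$, gives $\nabla v = Q(\beta_0) Q(\rho_0 \log\tfrac12)$ on $B_{1/2}$.

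\textbf{Step 3: From pointwise to $W^{1,p}_{\mathrm{loc}}$ convergence.}
The gradient $|\nabla v_n|$ is bounded uniformly in $n$ and $x$ by a constant depending only on $\alpha$, because $R_E, R_*^{k_n}, Q_\alpha^{k_n}$ are orthogonal and $\|U_n\|$ is uniformly bounded. Pointwise convergence plus dominated convergence yields $\nabla v_n \to \nabla v$ in $L^p_{\mathrm{loc}}(\R^2;\R^{2\times 2})$ for every $p \ge 1$. The convergence $v_n \to v$ in $L^p_{\mathrm{loc}}$ then follows from the uniform gradient bound together with the convergence of the boundary data $v_n = R_E x \to Q(\beta_0) x$ on $\R^2 \setminus B_1$, via a Poincaré-type estimate on large balls.

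\textbf{Main obstacle.}
The delicate point is the bookkeeping of rotations and the uniformity of the layer-dependent expansions: one must verify that the accumulated rotation $k_n(x) \rho_n$ converges \emph{uniformly} to $\rho_0 \log r$ on compact subsets of $B_1\setminus B_{1/2}$ that stay away from the radii $r_I^k$, that the angular coordinates of the piecewise constant approximation converge uniformly to $\omega$, and that the discrepancy between adjacent odd/even triangles (which differ by the vanishing shear term and by a reflection $P_0$) indeed tends to zero in operator norm. The circles $\partial B_{r_I^k}$, across which $\nabla v_n$ has rank-one jumps, form a set of measure zero in the limit and thus do not obstruct $L^p$ convergence, but care is required to quantify the thinness of the transition neighbourhoods as $n\to\infty$.
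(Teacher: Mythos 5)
Your proposal is correct and follows essentially the same route as the paper: compute asymptotics of $U_n$, $e_{11}$, $R_E$, $r_I$ and $\log r_I$; use the collapsing of the $SO(2)$ factors to identify the pointwise limit of $\nabla v_n$ away from the measure-zero union of triangle boundaries (the paper's set $\mathcal Z$); and upgrade to $W^{1,p}_{\mathrm{loc}}$ convergence via the uniform $L^\infty$ bound on the gradients and dominated convergence. The only cosmetic differences are that the paper first extracts a weak-$*$ limit $\tilde v$ in $W^{1,\infty}$ and then identifies it via a.e. gradient convergence (rather than invoking a Poincaré argument for the functions), and that the paper tracks the $P_0$-reflected even triangles explicitly via the separate limit $P_0 U P_0\to U_\infty$; your ``Main obstacle'' paragraph asks for uniform convergence of $k_n(x)\rho_n$, which is in fact more than is needed, since dominated convergence only requires pointwise a.e. convergence together with the uniform gradient bound you already established.
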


\begin{rmk}
\label{rmk:notation}
If we seek to emphasise the dependence of the limiting deformation $v$ on $a$, we also use the notation $v_{a}$.
\end{rmk}

\begin{proof}
We have that $v_n(x) = R_Ex$ in $\R^2\setminus B_1$ for every $n$. As $n\to\infty$, the rotation matrix $R_E\to Q(\sin^{-1}(1-2\alpha))$, a rotation of angle $\sin^{-1}(1-2\alpha)$. Indeed, $R_E$ is such that $R_E \overline{E_1E_2} = P_0UP_0\overline{E_1E_2}$, and hence the angle $\beta_0$ of $R_E$ is given by 
$$
\beta_0=\sin^{-1} \frac{\overline{E_1E_2}\times P_0UP_0\overline{E_1E_2}}{|\overline{E_1E_2}| |P_0UP_0\overline{E_1E_2}|}. 
$$
We recall that
\begin{align*}
  \overline{E_1E_2}&= l_2e_{11}- l_1 e_{12}, \\
  P_0 U P_0 \overline{E_1E_2}&= a l_2 e_{11}- \frac{l_1}{a}e_{12}.
\end{align*}
and that by \eqref{LengthE1E2}--\eqref{LengthE1E2bis}
\begin{align*}
  |P_0UP_0\overline{E_1E_2}|^2= |\overline{E_1E_2}|= l_2^2 (1+a^2+2a\sin\frac{\pi}{n}).
\end{align*}
Using \eqref{eq:lin_comb} we deduce that 
$$
\beta_0 = \sin^{-1}\frac{(a^2-1)\cos\frac\pi n}{1+a^2+2a\sin\frac\pi n} \to \sin^{-1}(1-2\alpha)$$ 
as $n\to\infty.$

Therefore, we focus on the deformation in $B_1$ and notice that since $v_n$ is a bounded sequence in $W^{1,\infty}(B_1;\R^2)$, there exists $\tilde v\in W^{1,\infty}(B_1;\R^2)$ with $\tilde v|_{\partial B_1} = R_E x$, and a non relabelled subsequence such that $v_n\to \tilde v$ weakly$-*$ in $W^{1,\infty}(B_1;\R^2)$, uniformly in $C(B_1;\R^2)$. We now claim that $\nabla v_n \to \nabla v$ a.e. in $B_1$, which (together with dominated convergence) in turn implies $\nabla v_n \to \nabla v$ in $L^p(B_1)$ for each $p\geq 1$, and $\tilde v=v$. \\

Let us start by observing that, by our preceding considerations, the deformation $v_n$ is explicitly given by \eqref{eq:vn}, and thus
\begin{align}
  \label{eq:nablaurings}
    \nabla v_n(x)= \sum_{k=0}^{N_n} R_E R_*^kQ_{\alpha}^k(\nabla u) (r_I^{-k}Q_{-\alpha}^kx)Q_{-\alpha}^k \chi_{\Sigma_k} + R_E \chi_{\R^2\setminus\Sigma_0^{co}} + R_ER_*^{N_n+1}\chi_{\overline\Sigma_{N_n+1}^{co}},
  \end{align} 
  and where $\nabla u$ is given by \eqref{defDu}. Next we notice that the set
  $$
  \mathcal{Z} := \bigcup_{n=3}^\infty \bigcup _{k=0}^{N_n} \bigcup _{i=1}^{n} \Bigl(\partial(r_I^k Q_{\alpha}^k Q_{i-1} T_1)\cup \partial(r_I^k Q_{\alpha}^k Q_{i-1} T_2)\Bigr),
  $$
  which is the union over all the boundaries of the $2 n$ triangles in each of the $N_n+1$ layers $\Sigma_k$, has zero two-dimensional Lebesgue measure. Here, as above, $Q_{j}$ is a rotation of angle $\frac{2\pi j}n.$ Let us now fix $x\in B_{\frac12}\setminus\mathcal{Z}$. Then, there exists $n_x \geq 3$ such that $x\in B_{\frac12} \setminus \bigcup_{k=0}^{N_n}\Sigma_k$ for every $n\geq n_x$. For $n\geq n_x$, then $\nabla v_n(x) = R_E R_*^{N_n}$ and $R_E R_*^{N_n} \to Q(1-2\alpha) Q(\rho_0\log\frac12)$ as $n\to\infty$ (see \eqref{RstarConv} below).  
  
Let now $x\in B_1 \setminus\bigl(B_{\frac12}\cup\mathcal{Z}\bigr)$. Then, there exists $n_x \geq 3$ such that $x\in \bigcup_{k=0}^{N_n} \Sigma_k$ for every $n\geq n_x$. Therefore, given any $n\geq n_x$ we have that there exists $0\leq k \leq N_n$ and $1\leq i\leq n$ such that $x\in r_I^kQ_{\alpha}^kQ_{i-1} T_1$ or such that $x\in r_I^kQ_{\alpha}^kQ_{i-1} T_2$. Suppose without loss of  generality the first inclusion holds, as the second case can be treated similarly (see \eqref{Uconv2} below). We then   have that 
\beq
\label{Gradvn}
\nabla v_n(x) = R_*^kQ_{\alpha}^k Q_{i-1} U Q_{i-1}^T Q_{-\alpha}^k.
\eeq
Furthermore, 
\beq
\label{distPoints}
|x - r_I^k Q^k_\alpha Q_{i-1} e_1| \leq r_I^{k}\diam\, T_1 \leq r_I^k\frac{c_0}{n},
\eeq
where we denoted by $\diam\, T_1$ the maximal Euclidean distance between two points within $\overline T_1$ (the closure of $T_1$), which can be bounded by a positive constant $c_0$ (independent of $n,i,k$) divided by $n$. Let now $(r,\omega),(r_0,\omega_0)\in (\frac12,1)\times [0,2\pi)$ be respectively the polar coordinates of $x$ and $r_I^k Q^k_\alpha Q_{i-1} e_1$. We notice that, by \eqref{distPoints}, 
\beq
\label{RotsConv}
|Q^k_\alpha Q_{i-1} - Q(\omega)| = |Q(\omega_0) - Q(\omega)|\leq \frac{c_1}{n},
\eeq
for some $c_1>0$ independent of $i,k,n.$ On the other hand, 
  \begin{align*}
    k =   {\frac{\log(r_0)}{\log(r_I)}} .
    \end{align*}
  Now, as $\log r_I = -\frac{2\pi\sqrt{\alpha(1-\alpha)}}{n} + o(n^{-1})$
  as $n\to \infty$, and using the notation that $Q_{\varphi} = Q(\frac{2\pi}{n}\varphi)$, we obtain that 
  \beq
  \label{RstarConv}
  \begin{split}
    R_*^k&= Q_{(1-2\alpha) k}= Q\left(\frac{2\pi}{n} (1-2\alpha) \frac{\log(r_0)}{-\frac{2\pi\sqrt{\alpha(1-\alpha)}}{n} +o(n^{-1})}  \right)\\
    &\rightarrow Q\left(\frac{(2\alpha-1)}{\sqrt{\alpha(1-\alpha)}} \log(r_0)\right)=: Q(\rho_0\log(r_0)).
  \end{split}
  \eeq
Finally, we recall that $e_{11}$ is a normalised version of
    \begin{align*}
      1-\frac{r_I}{r_E}e^{i\frac{2\pi}{n}\alpha} &= 1- \left(1-\frac{2\pi}{n}\sqrt{\alpha(1-\alpha)}\right) \left(1+i\frac{2\pi}{n}\alpha \right) + O(n^{-2}) \\
      &= \frac{2\pi}{n} (\sqrt{\alpha(1-\alpha)} - i \alpha) + \mathcal{O}(n^{-2}),
    \end{align*}
    where we have identified $\R^2$ with $\C$.
    Normalising and taking the limit, we hence obtain that
    \begin{align*}
      e_{11}\rightarrow
      \begin{pmatrix}
        \sqrt{1-\alpha} \\ -\sqrt{\alpha}
      \end{pmatrix} =: \bar{e}_{11}.
    \end{align*}
     As a consequence, 
     \beq
     \label{Uconv}
U \to a \bar{e}_{11}\otimes\bar{e}_{11} +a^{-1}\bar{e}_{11}^\perp\otimes\bar{e}_{11}^\perp =: U_\infty,
     \eeq
     where we used that $\frac{1}{\tan{\phi}}=
     \frac{1}{\tan(\frac{n-2}{2n}\pi)}\rightarrow 0$.
     We remark that, in the case $x\in r_I^kQ_{\alpha}^kQ_{i-1}T_2$ for some $n\geq 3$, $0\leq i\leq n$ and some $0\leq k\leq N_n$ the proof differs just for \eqref{Uconv} which  should read 
     \beq
     \label{Uconv2}
      \begin{split}
       P_0 U P_0 &\rightarrow (\bar{e}_{11}\otimes \bar{e}_{11} -  e_{11}^\bot \otimes e_{11}^\bot) (a \bar{e}_{11}\otimes\bar{e}_{11} +a^{-1}\bar{e}_{11}^\perp\otimes\bar{e}_{11}^\perp ) (\bar{e}_{11}\otimes \bar{e}_{11} -  e_{11}^\bot \otimes e_{11}^\bot)  \\
       &= a \bar{e}_{11}\otimes\bar{e}_{11} +a^{-1}\bar{e}_{11}^\perp\otimes\bar{e}_{11}^\perp = U_\infty.
     \end{split}
     \eeq
Therefore, collecting \eqref{Gradvn}--\eqref{Uconv}, by the triangle inequality we get
\[
\begin{split}
|\nabla v_n(x) &-\nabla v(x)|\\
\leq c_2 &\bigl( \max\{|U-U_\infty|,|P_0UP_0-U_\infty|\} + |Q_{\alpha}^kQ_{i-1} - Q(\omega)| \\
&+ |R^k_* - Q(\rho_0\log r_0)| 
+ |Q(\rho_0\log r) - Q(\rho_0\log r_0)| \bigr) \to 0,
\end{split}
\]
for some $c_2>0$. This concludes the proof of the claim.
\end{proof}

With the results of Proposition \ref{prop:n_infty} in hand, we can also compute the associated deformation:

\begin{cor}
\label{cor:def}
Let $\alpha \in (0,1)$. Then, we have
  \begin{align*}
    v_n(x)\rightarrow v_a(x):=
    r Q(\omega+\rho_0\log(r))
    \begin{pmatrix}
      2 \sqrt{\alpha (1-\alpha)} \\ 1-2\alpha 
    \end{pmatrix},
  \end{align*}   
  uniformly in $B_1\setminus B_{\frac{1}{2}}$.
\end{cor}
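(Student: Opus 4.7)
The plan is to identify the limit $v$ from Proposition \ref{prop:n_infty} by producing a closed-form candidate $v_a$ in polar coordinates and then verifying, separately, the correct gradient and the correct boundary data. Since $v_n \equiv R_E x$ in $\R^2 \setminus \Sigma_0^{co}$ and $R_E \to Q(\beta_0)$ as $n \to \infty$, the desired limit satisfies $v|_{\partial B_1} = Q(\beta_0)x$.

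First, I would upgrade the $W^{1,p}_{\mathrm{loc}}$ convergence from Proposition \ref{prop:n_infty} to \emph{uniform} convergence on the compact annulus $\overline{B_1 \setminus B_{1/2}}$. The gradients $\nabla v_n$ are uniformly bounded in $L^{\infty}(B_1)$ by construction, so $\{v_n\}$ is uniformly Lipschitz; together with the boundary identification this yields an equibounded equicontinuous family, so Arzelà--Ascoli combined with the already-established $L^p$ convergence forces $v_n \to v$ uniformly on $\overline{B_1 \setminus B_{1/2}}$.

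Second, I would propose the ansatz
\begin{equation*}
v_a(x) := r\, Q\bigl(\omega + \rho_0 \log r\bigr)\mathbf{b}, \qquad \mathbf{b} := \bigl(2\sqrt{\alpha(1-\alpha)},\, 1-2\alpha\bigr)^T = (\cos\beta_0, \sin\beta_0)^T,
\end{equation*}
which at $r = 1$ reduces to $Q(\omega)\mathbf{b} = Q(\beta_0)(\cos\omega,\sin\omega)^T = Q(\beta_0)x$, matching the boundary trace. Writing $v_a = r\,Q(\gamma)e_1$ with $\gamma := \omega + \rho_0 \log r + \beta_0$, a direct polar-coordinate calculation gives $\partial_r v_a = Q(\gamma)(e_1 + \rho_0 e_2)$ and $r^{-1}\partial_\omega v_a = Q(\gamma)e_2$. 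Assembling the columns against the polar frame $(e_r, e_\omega) = (Q(\omega)e_1, Q(\omega)e_2)$ and using the commutativity of planar rotations to factor $Q(\gamma) = Q(\rho_0\log r)\, Q(\omega)\, Q(\beta_0)$, I obtain
\begin{equation*}
\nabla v_a \;=\; Q(\rho_0 \log r)\,Q(\omega)\,\bigl[Q(\beta_0)\bigl(I + \rho_0\, e_2\otimes e_1\bigr)\bigr]\,Q(\omega)^T.
\end{equation*}

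Third, to identify this with the formula of Proposition \ref{prop:n_infty}, it remains to verify the algebraic identity
\begin{equation*}
Q(\beta_0)\begin{pmatrix} 1 & 0 \\ \rho_0 & 1\end{pmatrix} \;=\; a\,\bar e_{11}\otimes \bar e_{11} + a^{-1}\,\bar e_{11}^\perp \otimes \bar e_{11}^\perp.
\end{equation*}
Substituting $\cos\beta_0 = 2\sqrt{\alpha(1-\alpha)}$, $\sin\beta_0 = 1-2\alpha$, $\rho_0 = (2\alpha-1)/\sqrt{\alpha(1-\alpha)}$, $a = \sqrt{(1-\alpha)/\alpha}$, and $\bar e_{11} = (\sqrt{1-\alpha},-\sqrt{\alpha})$, both sides reduce entrywise to the symmetric matrix with diagonal $\bigl((1-\alpha)^2+\alpha^2\bigr)/\sqrt{\alpha(1-\alpha)}$ and $2\sqrt{\alpha(1-\alpha)}$ and off-diagonal $-(1-2\alpha)$. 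Once this matrix identity is established, $\nabla v_a = \nabla v$ a.e.\ on the annulus; since $B_1 \setminus B_{1/2}$ is connected and $v_a, v$ agree on $\partial B_1$, we conclude $v = v_a$ there, yielding the claimed uniform limit.

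The only substantive computation is the $2\times 2$ matrix identity in the third step; the remainder is compactness and standard polar-coordinate bookkeeping. The main pitfall to watch for is the order of composition of the rotations and the signs of the various angles (particularly since $\rho_0$ has sign opposite to $a-a^{-1}$).
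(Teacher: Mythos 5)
Your proposal is correct; I checked the key identity $Q(\beta_0)\bigl(\mathrm{Id}+\rho_0\, e_2\otimes e_1\bigr)=a\,\bar e_{11}\otimes \bar e_{11}+a^{-1}\,\bar e_{11}^{\perp}\otimes \bar e_{11}^{\perp}$ entrywise (using $\cos\beta_0=2\sqrt{\alpha(1-\alpha)}$, $\sin\beta_0=1-2\alpha$, $\rho_0=(2\alpha-1)/\sqrt{\alpha(1-\alpha)}$) and it does hold, and your polar computation of $\nabla v_a$ together with the commutation $Q(\gamma)=Q(\rho_0\log r)\,Q(\omega)\,Q(\beta_0)$ is sound. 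The route is, however, the reverse of the paper's: there, the gradient formula from Proposition \ref{prop:n_infty} is inserted into the inverted chain-rule relation for $(\partial_r\hat v_j,\partial_\omega\hat v_j)$, the resulting explicit trigonometric expressions in $\omega+\rho_0\log r$ are written out, and the deformation is obtained by \emph{integrating} them (the $\omega$-integration being immediate), with the integration constant and boundary matching left implicit; you instead \emph{verify} a closed-form ansatz by differentiation, compress the whole computation into a single $2\times 2$ matrix identity, and fix the additive constant explicitly via the trace $v|_{\partial B_1}=Q(\beta_0)x$ on the connected annulus. Your version buys a cleaner treatment of the constant of integration and of the boundary data, at the price of having to guess $v_a$ in advance, whereas the paper's integration derives the formula directly. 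One small remark: the uniform convergence you re-derive via uniform Lipschitz bounds and Arzelà--Ascoli (plus uniqueness of the $L^p$ limit along subsequences) is already contained in the proof of Proposition \ref{prop:n_infty}, where $v_n\to v$ uniformly in $C(B_1;\R^2)$ is established, so that step is redundant but harmless.
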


\begin{figure}[t]
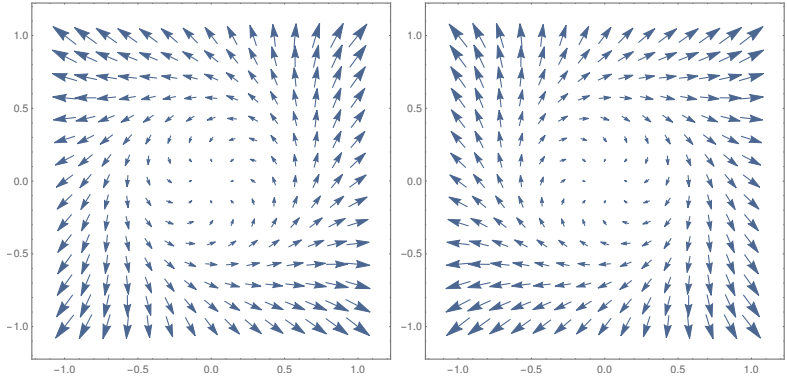

  \centering
  \includegraphics[width=0.4\linewidth,page=9]{figures}
  \includegraphics[width=0.4\linewidth,page=10]{figures} 
  \caption{Vector plot of $v(x)$ for $\alpha=0.2$ (left) and $\alpha=0.8$ (right).}
  \label{fig:vectorfields}
\end{figure}

\begin{proof}
In order to compute the underlying vector field, we note that in polar coordinates
\begin{align*}
x = \begin{pmatrix} r \cos(\omega) \\ r \sin(\omega) \end{pmatrix},
\end{align*}
we have by virtue of the chain rule
\begin{align*}
\begin{pmatrix}
\frac{\p v_j}{\p x_1}\\
\frac{\p v_{j}}{\p x_2}
\end{pmatrix}
= 
\begin{pmatrix}
\cos(\omega) & - \frac{1}{r}\sin(\omega)\\
\sin(\omega) & \frac{1}{r} \cos(\omega)
\end{pmatrix}
\begin{pmatrix}
\frac{\p \hat{v}_j}{\p r}\\
\frac{\p \hat{v}_j }{\p \omega}
\end{pmatrix},
\end{align*}
where $\hat{v}_j(r,\omega) = v_j(r \cos(\omega), r \sin(\omega))$ and $j\in \{1,2\}$. As a consequence,
\begin{align*}
\begin{pmatrix}
\frac{\p \hat{v}_j}{\p r}\\
\frac{\p \hat{v}_j}{\p \omega}
\end{pmatrix}
= r \begin{pmatrix}
\frac{1}{r}\cos(\omega) &  \frac{1}{r}\sin(\omega)\\
-\sin(\omega) &  \cos(\omega)
\end{pmatrix}
\begin{pmatrix}
\frac{\p v_j}{\p x_1}\\
\frac{\p v_{j}}{\p x_2}
\end{pmatrix}
= F(r,\omega) M(\omega) Q(-\rho_0 \log(r)) e_j,
\end{align*}
where 
\begin{align*}
F(r,\omega) &= 
 r \begin{pmatrix}
\frac{1}{r}\cos(\omega) &  \frac{1}{r}\sin(\omega)\\
-\sin(\omega) &  \cos(\omega)
\end{pmatrix}, \\
\ M(\omega) &= Q(\omega)(a \overline{e}_{11}\otimes \overline{e}_{11} + a^{-1} \overline{e}_{11}^{\perp}\otimes \overline{e}_{11}^{\perp})^T Q(-\omega).
\end{align*}
Simplifying the corresponding expressions, we obtain
\begin{align*}
\begin{pmatrix}
\frac{\p \hat{v}_1}{\p r} \\
\frac{\p \hat{v}_1}{\p \omega}
\end{pmatrix}
= \begin{pmatrix}
\frac{1+2\alpha(\alpha-1)}{\sqrt{\alpha(1-\alpha)}}\cos(\omega + \rho_0 \log(r)) + (1-2\alpha) \sin(\omega + \rho_0 \log(r))\\
r\left( (-1+2\alpha) \cos(\omega + \rho_0 \log(r)) - 2\sqrt{\alpha(1-\alpha)} \sin(\omega + \rho_0 \log(r)) \right)
\end{pmatrix},\\
\begin{pmatrix}
\frac{\p \hat{v}_2}{\p r} \\
\frac{\p \hat{v}_2}{\p \omega}
\end{pmatrix}
= \begin{pmatrix}
(-1+2\alpha)\cos(\omega + \rho_0 \log(r)) + \frac{1+2\alpha(\alpha-1)}{\sqrt{1-\alpha}} \sin(\omega + \rho_0 \log(r))\\
r\left( 2\sqrt{(1-\alpha)\alpha}\cos(\omega+ \rho_0 \log(r)) + (-1+2\alpha) \sin(\omega+ \rho_0 \log(r)) \right)
\end{pmatrix}.
\end{align*}
Integrating these expressions (in particular the $\omega$ integration becomes quite straight forward) then yields the desired result. 
\end{proof}

\subsection{From elastic crystals to nematic elastomers}
\label{sec:solids_liquids}

As explained in the introduction, the specific solutions to the differential inclusion which we consider in this article allow us to treat Conti-type constructions for elastic crystals and nematic liquid crystal elastomers within a unified framework. This is particularly transparent in the limit $n\rightarrow \infty$. Here as a direct consequence of the considerations in the last section, we infer the following observation:

\begin{cor}
\label{lem:K_inf}
Let $K_n(a):= \bigcup\limits_{P \in \mathcal{P}_n } SO(2) P U(a) P^T$, where $U(a)$ is as in \eqref{eq:U1}. Then, as $n \rightarrow \infty$, it converges in a pointwise sense to the set
\begin{align}
\label{eq:limit_set}
\begin{split}
K_{\infty}(a)
&:= \bigcup\limits_{P\in O(2)} SO(2) P U_{\infty}(a) P^T \\
&= \{F \in \R^{2\times 2}: \ \det(F)=1, \ \lambda(F) = a, \ \mu(F) = a^{-1}\},
\end{split}
\end{align}
{where $U_{\infty}(a):= a \bar e_{11} \otimes \bar e_{11} + \frac{1}{a} \bar e_{11}^{\perp}\otimes \bar e_{11}^{\perp} $, $\bar e_{11}$ is as in Proposition \ref{prop:n_infty}} and where $\lambda(F), \mu(F)$ denote the singular values of the matrix $F$. 
In particular, the deformation $v$ from Proposition \ref{prop:n_infty} is a solution to the differential inclusion
\begin{align*}
\nabla v \in K_{\infty}(a) \mbox{ in } B_1.
\end{align*}
\end{cor}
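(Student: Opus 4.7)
The plan is to prove the two assertions of the corollary in sequence: first the pointwise convergence $K_n(a) \to K_\infty(a)$ together with the singular-value characterisation of the limit set, and then the fact that $\nabla v$ takes values in $K_\infty(a)$.

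For the pointwise convergence of the sets, I would use what has already been established in the proof of Proposition \ref{prop:n_infty}. There it was shown in \eqref{Uconv} that $U(a) \to U_\infty(a)$ as $n \to \infty$: indeed $\tan\phi = \tan\bigl(\tfrac{n-2}{2n}\pi\bigr) \to +\infty$, so the off-diagonal term $\tfrac{a^{-1}-a}{\tan\phi}$ vanishes, and $e_{11} \to \bar e_{11}$. Simultaneously, the discrete rotations in $\mathcal{R}_1^n$ (by multiples of $2\pi/n$) become dense in $SO(2)$, and thus $\mathcal{P}_n = \mathcal{R}_1^n \cup \mathcal{R}_2^n$ becomes dense in $O(2)$. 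From this I would argue the two inclusions: for any $F = R P U_\infty(a) P^T \in K_\infty(a)$, choose $P_n \in \mathcal{P}_n$ with $P_n \to P$ so that $R P_n U(a) P_n^T \in K_n(a)$ converges to $F$; conversely, any convergent sequence from the $K_n(a)$ has a limit of the required form after extracting convergent subsequences of its orthogonal factors.

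For the second equality, describing $K_\infty(a)$ in terms of singular values, I would use SVD and polar decomposition. The inclusion $\subseteq$ is immediate: if $F = R P U_\infty(a) P^T$ with $R \in SO(2)$ and $P \in O(2)$, then $\det F = \det R \,(\det P)^2 \det U_\infty(a) = 1$, and $F^T F = P U_\infty(a)^2 P^T$ has eigenvalues $a^2, a^{-2}$, so the singular values of $F$ are $a, a^{-1}$. For $\supseteq$, take $F$ with $\det F = 1$ and singular values $a, a^{-1}$; polar decomposition yields $F = R S$ with $R \in SO(2)$ (since $\det F > 0$) and $S$ symmetric positive definite with eigenvalues $a, a^{-1}$. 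Diagonalising $S = \tilde P \diag(a, a^{-1}) \tilde P^T$ and using that $\diag(a, a^{-1}) = \bar P^T U_\infty(a) \bar P$ with $\bar P \in O(2)$ the orthogonal matrix whose columns are $\bar e_{11}, \bar e_{11}^\perp$, one obtains $F = R (\tilde P \bar P^T) U_\infty(a) (\tilde P \bar P^T)^T \in K_\infty(a)$.

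The remaining inclusion $\nabla v \in K_\infty(a)$ is then read off directly from the explicit formula provided by Proposition \ref{prop:n_infty}: on $B_1 \setminus B_{1/2}$,
\[
\nabla v(x) = Q(\rho_0 \log r)\, Q(\omega)\, U_\infty(a)\, Q(\omega)^T,
\]
which is of the required form with $R = Q(\rho_0 \log r) \in SO(2)$ and $P = Q(\omega) \in SO(2) \subset O(2)$. I do not anticipate a serious obstacle: once $U(a) \to U_\infty(a)$ and the density of $\mathcal{P}_n$ in $O(2)$ are in place, the corollary is essentially bookkeeping. The only points requiring mild care are making the pointwise (Hausdorff) convergence of the sets $K_n(a)$ precise and clarifying in what sense the inclusion $\nabla v \in K_\infty(a)$ extends into the central disc $B_{1/2}$, where $\nabla v$ is a pure rotation rather than an element of $K_\infty(a)$.
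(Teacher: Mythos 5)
Your proof is correct and follows essentially the same route as the paper: pointwise convergence of $U(a)\to U_\infty(a)$ from \eqref{Uconv} together with density of $\mathcal{P}_n$ in $O(2)$ yields the set convergence; the inclusion $\subseteq$ of the singular-value characterisation follows from elementary determinant/singular-value computations; and $\supseteq$ is proved by polar decomposition plus diagonalising the symmetric factor and conjugating to $U_\infty(a)$. The paper's proof is in fact slightly more terse than yours (it does not explicitly introduce the change of basis $\bar P$), so your version is, if anything, more carefully spelled out.

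One point you raise at the end deserves to be taken seriously rather than dismissed as "mild care": the paper's Proposition \ref{prop:n_infty} explicitly states that $\nabla v$ is a pure rotation on $B_{1/2}$, and for $a\neq 1$ a rotation has singular values $(1,1)\neq (a,a^{-1})$, so it does \emph{not} lie in $K_\infty(a)$. The final inclusion therefore holds on the annulus $B_1\setminus \overline{B_{1/2}}$ (and trivially on $\R^2\setminus\overline{B_1}$, where $\nabla v$ is also a rotation) but not on the inner disc. The statement ``$\nabla v\in K_\infty(a)$ in $B_1$'' should be read as referring to the annulus, or else $SO(2)$ should be adjoined to the target set as an austenite well, as is done elsewhere in the paper. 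The paper's one-line justification (``follows directly from Proposition \ref{prop:n_infty}'') glosses over this, so your flagging it is correct.
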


We note that the set in \eqref{eq:limit_set} essentially corresponds to the planar nematic liquid crystal elastomer energy wells (modulo possible rescaling, see the discussion below).

\begin{proof}
The convergence of $K_n(a)$ to $K_{\infty}(a)$ follows from the pointwise convergence of $U(a)$ to the matrix $U_{\infty}(a)$ as $n\rightarrow \infty$ (see \eqref{Uconv}) and the fact that $\mathcal{P}_n \rightarrow O(2)$ as $n\rightarrow \infty$.

In order to observe the claimed identity, we
note that by the properties of the determinant and of $U_{\infty}(a)$, it holds
\begin{align*}
\bigcup\limits_{P\in O(2)} SO(2) P U_{\infty}(a) P^T  &\subset 
\{F \in \R^{2\times 2}: \ \det(F)=1, \ \lambda(F) = a, \ \mu(F) = a^{-1}\}\\
& =: K_{\infty}'(a).
\end{align*}
It hence remains to prove the reverse inclusion. Let $F \in K_{\infty}'(a)$. Then, by the polar decomposition $F = R_1 V$ for some $R_1\in SO(2)$ and some $V\in\R^{2\times 2}$ symmetric, positive definite with eigenvalues $a,a^{-1}$. Now, by the spectral theorem and the fact that $U_\infty$ is diagonal, there exists $R_2\in SO(2)$ such that $R_2U_\infty R_2^T = V$. As a consequence, $F = R_1R_2U_\infty R_2^T$, which concludes the proof.

The identity for $\nabla v$ follows directly from Proposition \ref{prop:n_infty}.
\end{proof}

In the sequel, we explain a precise sense in which \eqref{eq:limit_set} can be understood as the energy wells for a planar nematic liquid crystal elastomer differential inclusion. This allows us to view the deformation $v$ from Proposition \ref{prop:n_infty} and Corollary \ref{cor:iterate} as a microstructure arising in the modelling of certain planar deformations in nematic liquid crystal elastomers.

To this end, we begin by investigating planar solutions to the geometrically non-linear, nematic elastomer  differential inclusion \eqref{1901311846}. More precisely, we consider $u:B_1(0)\times [0,1] \rightarrow \R^3$ which is of the form
\begin{align*}
u(x_1,x_2,x_3) = (\tilde{u}(x_1,x_2),0) + \left[ \begin{pmatrix} 0 & 0 \\ 0 & r^{-\frac{1}{6}} \end{pmatrix}x \right]^T.
\end{align*}
Here $r^{-\frac{1}{3}}$ is one of the constants from \eqref{1901311846} and we assume that $\tilde{u}(x_1,x_2)=M'(x_1,x_2)$ on $\partial \Omega$ for some $M' \in \R^{2\times 2}$. Seeking an exactly stress-free deformation within the framework of the BWT model \eqref{1901311846}, the two eigenvalues of $\nabla' \tilde{u}$ are therefore determined by the differential inclusion $\nabla u \in \tilde{K}_{\infty}$ with $\tilde{K}_{\infty}$ as in \eqref{1902010046}. Here the notation $\nabla' \tilde{u}$ refers to the gradient of $\tilde{u}$ in the $x_1, x_2$ directions. Without loss of generality assuming that $r>1$ and with slight abuse of notation, the singular values are thus given by $\lambda_1= r^{-\frac{1}{6}}$ and $\lambda_2 = r^{\frac{1}{3}}$. 
In other words, in order to solve the differential inclusion  $\nabla u \in \tilde{K}_{\infty}(r)$, it is necessary and sufficient that
\begin{align}
\label{eq:K_2D}
\nabla' \tilde{u}  \in K_{2D}(r),
\end{align} 
where 
\begin{align}
\label{eq:K_2D_well}
 K_{2D}(r) :=\{ F\in \R^{2\times 2}: \ \lambda_1(F)= r^{-\frac{1}{6}}, \ \lambda_2(F) = r^{\frac{1}{3}}, \ \det(F) = r^{\frac{1}{6}}\}.
\end{align}
We note that the set in \eqref{eq:K_2D_well}   coincides with the set from Corollary \ref{lem:K_inf} up to a rescaling which modifies the determinant, i.e., $K_{2D}(r)= r^{\frac{1}{12}} K_{\infty}(r^{\frac{1}{4}})$.

By the theory of relaxation (see for instance \cite{DaM12,D}), interesting microstructures arise if 
\begin{align*}
M'\in \inte K^{qc}_{2D}(r):=\{F\in \R^{2\times 2}: \ r^{-\frac{1}{6}} < \lambda_1(\nabla \tilde{u}) \leq \lambda_2(\nabla \tilde{u})< r^{\frac{1}{3}}, \ \det(F) = r^{\frac{1}{6}}\}.
\end{align*}
In particular, we obtain that for $a=r^{\frac{1}{4}}$  the deformation $r^{\frac{1}{12}}v$ from Proposition \ref{prop:n_infty} and Corollary \ref{cor:iterate} is a solution to the differential inclusion \eqref{eq:K_2D} with a non-trivial microstructure.\\

Concluding our discussion on the geometrically non-linear theory, we present an example of a director field minimising the energy density of nematic elastomers in Figures \ref{1901112220}-\ref{fig:finite_n}.  
Here the planar deformation gradient $\nabla u(x)$ is obtained as an exact solution in the sense that we have $\nabla u\in K_{2D}(r)$, where
it is imagined to be the $2\times 2$ planar deformation associated with a full $3\times 3$ volume-preserving deformation. Consequently, the nematic elastomer is in planar expansion in all the deformed configurations for $a>1$.
The planar director field is taken in the form $\hat{n}(x)=(\cos\theta,\sin\theta,0)$ and it corresponds to the eigenvector associated with the largest eigenvalue of $\nabla u \nabla u^T$, in agreement with (\ref{1901312036}).
More exact constructions are displayed in Figure \ref{fig:finite_n} for large nematic anisotropies at finite $n$. These correspond to solutions
\begin{align*}
\nabla u \in K_n(a),
\end{align*}
which however can always also be interpreted as a nematic elastomer inclusion problem as
\begin{align*}
K_{n}(a) \subset r_n^{\frac{1}{12}} K_{2D}(r_n),
\end{align*}
where $r_n>0$ is a function of $n,a$.
Although an anisotropy parameter of the order $r_n=O(10^2)$ is non-physical, we report these solutions as they represent nice examples of the theory developed in this article showing large deformations and director rotation.

Observe that the solutions obtained for finite $n$ for the \textit{discrete} NLCEs model still survive as exact solutions of nematic elastomer configurations since $\mathcal{P}_n\subset O(2)$. A possible application of the discrete model of NLCEs thus obtained for finite $n$ is for benchmarking of large numerical simulations. Here the advantage is that the discrete modelling approach involves only a finite subsets of energy wells and has the potential to provide a faster and more stable energy minimisation with respect to the full isotropic NLCEs model.

\begin{figure}[!htbp]
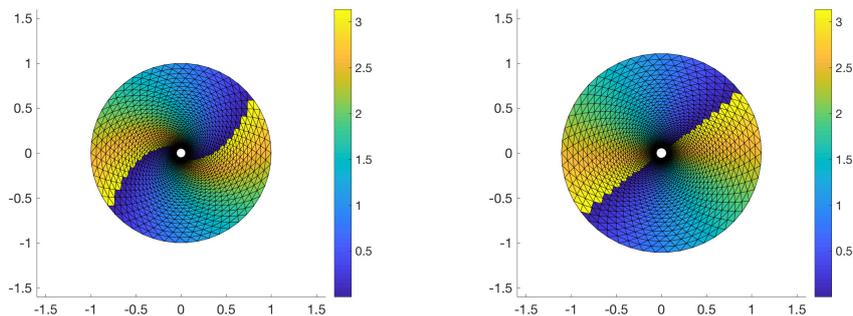

\centering
\includegraphics[width=6.2cm,page=11]{figures}
\includegraphics[width=6.2cm,page=12]{figures}
\caption{
Example of planar director fields that minimise pointwise the energy density for nematic elastomers. Directors are parametrised as $\hat{n}=(\cos\theta,\sin\theta,0)$ and the value of $\theta$ is represented by means of a colormap. Recall due to the head-tail symmetry the orientation of the molecules is fully described by $\hat{n}\otimes \hat{n}$ and therefore there is no discontinuity in passing from $\theta=0$ to $\theta=\pi$.
Here $\alpha=0.35$, $n=50$ for which we have $r_n\approx 3.5$. The director field is displayed in the reference configuration (left) and in the deformed configuration (right). Observe the planar expansion as $r_n>1$.}\label{1901112220}
\end{figure}

\begin{figure}[!htbp]
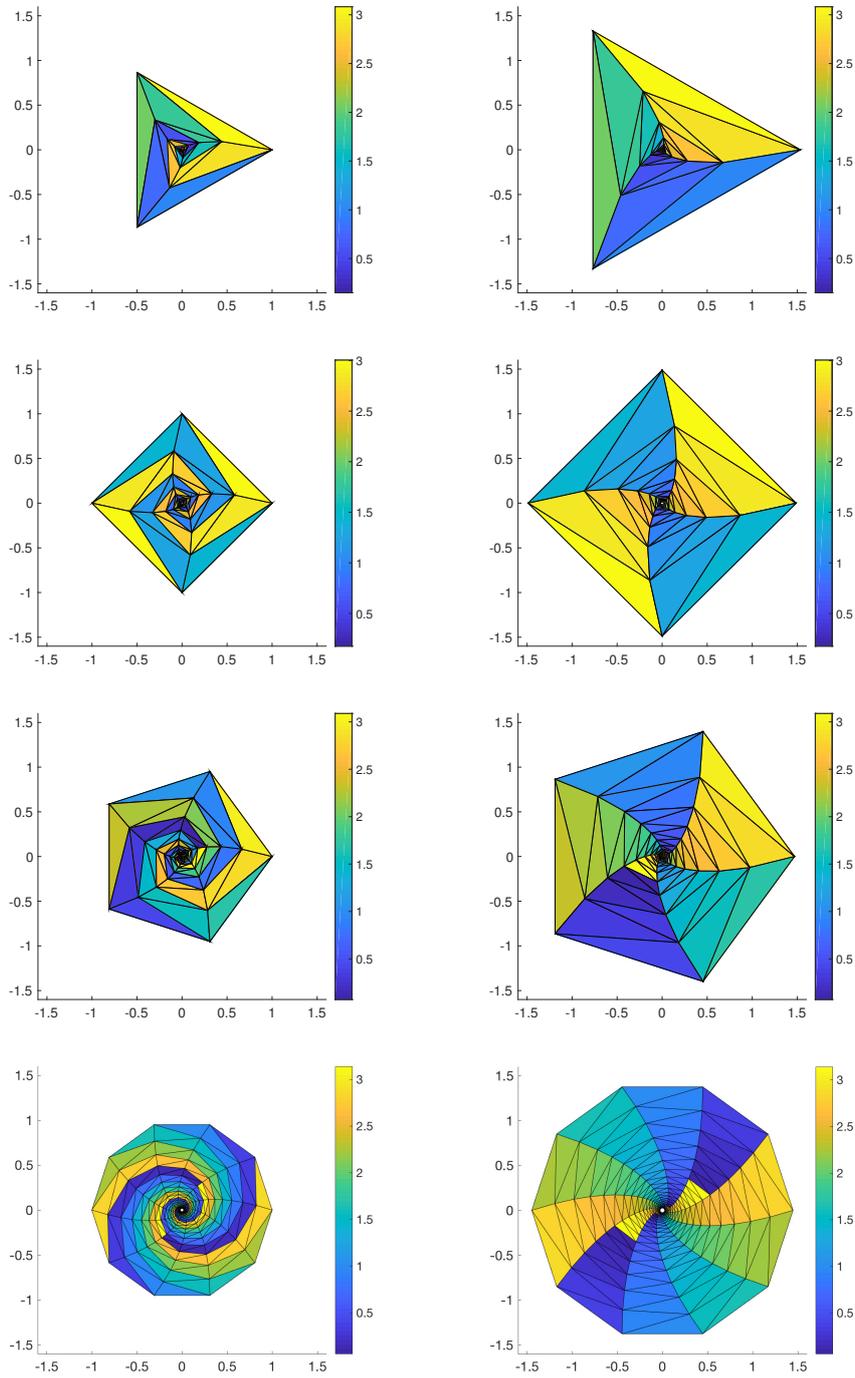

\centering
\includegraphics[width=6.2cm,page=13]{figures}
\includegraphics[width=6.2cm,page=14]{figures}
\includegraphics[width=6.2cm,page=15]{figures}
\includegraphics[width=6.2cm,page=16]{figures}
\includegraphics[width=6.2cm,page=17]{figures}
\includegraphics[width=6.2cm,page=18]{figures}
\includegraphics[width=6.2cm,page=19]{figures}
\includegraphics[width=6.2cm,page=20]{figures}
\caption{
More examples of planar director fields obtained for large values of the nematic anisotropy parameter $r_n$. Here
$\alpha=0.1$. From top to bottom, $n=3$ ($r_n\approx 171$), $n=4$ ($r\approx 118$), $n=5$ ($r_n\approx 102$) and $n=10$ ($r_n\approx 85$) respectively. The director field is displayed in the reference configuration (left) and in the deformed configuration (right). Observe a large planar expansion as $r_n \gg 1$.}
\label{fig:finite_n}
\end{figure}

\section{Linearisation of the non-linear constructions}

\label{sec:lin_main}

In this section, we discuss the geometrically linearised (but physically non-linear) counterpart of the setting discussed in the previous sections. Here our main observations are the following: 
\begin{itemize}
\item First, we note that it is always possible to infer a \emph{linear analogue} of the geometrically non-linear Conti construction by linearisation. Motivated by the physically most relevant situation that $u(x) = x + \epsilon v(x)$ for some small $\epsilon$ and a function $v$ with $\nabla v$ controlled, in Section \ref{sec:lin}, we study the linearisation of our geometrically non-linear constructions around $\alpha = \frac{1}{2}$.
\item Moreover, we study the \emph{number of wells} involved in the geometrically linearised Conti type constructions. While in the geometrically non-linear setting already in the case of a single onion ring layer, it is necessary to work with a phase transformation with $2n$ wells if $n$ is odd (but only $n$ wells is $n$ is even), in the geometrically linearised case (linearised at $\alpha = \frac{1}{2}$) only $n$ wells are needed, independently of whether $n$ is odd or even (see Section \ref{sec:number_wells}).
\item We also consider the \emph{iterability} of the single onion ring constructions: In contrast to the geometrically non-linear setting, the geometrically linearised solutions (at $\alpha = \frac{1}{2}$) \emph{can} be iterated into solutions with multiple onion layers without having to include a larger number of wells. As already noted in the materials science literature the presence of ``disclinations'' \cite{KK91} hence is a purely \emph{geometrically non-linear} effect (see Section \ref{sec:finite_n}).
\item Finally, in Section \ref{sec:lin_infty}, similarly as in the geometrically non-linear set-up, we also discuss the \emph{limit} $n\rightarrow \infty$ and relate this to the analogous differential inclusions arising in the modelling of nematic liquid crystal elastomers. In particular, we show that our solutions exactly reproduce the model solution which had been derived in \cite{ADMDS15}.
\end{itemize}

\subsection{Linearisation}
\label{sec:lin}

We begin by deriving a geometrically linear Conti construction from the geometrically non-linear one by linearisation at $\alpha = \frac{1}{2}$. In order to simplify our presentation, we study the linearisation in the coordinates given by $e_{11}$ and $e_{11}^{\perp}$ (see Lemma \ref{lem:coord} below for a justification).

The linearisation of the wells is given by
\begin{align*}
E_j= \frac{d}{d\alpha}\left[ e(U_j(\alpha)) \right]|_{\alpha = \frac{1}{2}},
\end{align*}
where $e(M):= \frac{1}{2}(M+M^T)$ denotes the symmetrised part of a matrix $M\in
\R^{2\times 2}$ and $U_j(\alpha):=\nabla u|_{T_j}$ is the restriction of the piecewise constant function $\nabla u$ from Propositions \ref{prop:suff} or \ref{prop:n_infty} (which in particular depends on $\alpha$).
In particular, 
\begin{align}
\label{eq:E1}
E_1 =  \begin{pmatrix}1 & -  \frac{\cos(\phi_n/2)}{\sin(\phi_n/2)} \\ -  \frac{\cos(\phi_n/2)}{\sin(\phi_n/2)} & -1\end{pmatrix} ,
\end{align}
and
\begin{align*}
E_2 =  \begin{pmatrix} 1 &   \frac{\cos(\phi_n/2)}{\sin(\phi_n/2)} \\ \frac{\cos(\phi_n/2)}{\sin(\phi_n/2)} & -1\end{pmatrix} .
\end{align*}

In order to justify our linearisation (in the $\alpha$ dependent choice of coordinates), we note the following:

\begin{lem}
\label{lem:coord}
For each $\alpha\in (0,1)$ let $e_{11}, e_{11}^{\perp}$ denote the ($\alpha$ dependent) coordinates from Section \ref{sec:problem}. Then, we have
\begin{align*}
&\frac{d}{d\alpha} \left[ (e_{11} \ e_{11}^{\perp}) e(U_j(\alpha)) (e_{11} \ e_{11}^{\perp})^{T} \right]\big|_{\alpha = \frac{1}{2}}\\
&=  (e_{11} \ e_{11}^{\perp}) \big|_{\alpha = \frac{1}{2}} \left( \frac{d}{d\alpha}  e(U_j(\alpha)) \big|_{\alpha=\frac{1}{2}} \right) (e_{11} \ e_{11}^{\perp})^{T} \big|_{\alpha = \frac{1}{2}}.
\end{align*}
\end{lem}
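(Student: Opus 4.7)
My plan is to apply the product rule to the left-hand side and exploit two algebraic facts: the matrix $R(\alpha) := \bigl(e_{11}(\alpha)\ e_{11}^\perp(\alpha)\bigr)$ lies in $SO(2)$ for every $\alpha \in (0,1)$, and $U_j(\tfrac12) = \Id$ for every $j \in \{1,\dots,2n\}$. Writing $M(\alpha) := e(U_j(\alpha))$ for brevity, the product rule gives
\begin{align*}
\frac{d}{d\alpha}\bigl[R(\alpha)\, M(\alpha)\, R(\alpha)^T\bigr] &= R'(\alpha)\, M(\alpha)\, R(\alpha)^T + R(\alpha)\, M'(\alpha)\, R(\alpha)^T \\
&\quad + R(\alpha)\, M(\alpha)\, (R(\alpha)^T)'.
\end{align*}
The middle summand, evaluated at $\alpha = \tfrac12$, is precisely the right-hand side of the claim, so it suffices to show that the first and third summands vanish at $\alpha = \tfrac12$.

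The key step is to verify that $U_j(\tfrac12) = \Id$ for every $j$. By \eqref{EqA} we have $a(\tfrac12) = 1$, which substituted into \eqref{eq:U1} gives $U(1) = e_{11}\otimes e_{11} + e_{11}^\perp\otimes e_{11}^\perp = \Id$, since the off-diagonal coefficient $(a^{-1}-a)/\tan\phi$ vanishes and $\{e_{11},e_{11}^\perp\}$ is an orthonormal basis. Inserting this into the definition \eqref{defDu} of $\nabla u$ on each triangle $T_j$, and using $P_0^2 = \Id$ in the even-indexed case (together with the fact that the rotations $Q_{(j-1)/2}$, $Q_{(j-2)/2}$ commute with $\Id$), one obtains $U_j(\tfrac12) = \Id$ and hence $M(\tfrac12) = \Id$. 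Geometrically this simply reflects the observation that at $\alpha = \tfrac12$ the Conti construction degenerates to the trivial deformation $u = \Id$, which is precisely why linearising at $\alpha = \tfrac12$ produces the geometrically linear model.

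Finally, using $M(\tfrac12) = \Id$, the first and third summands at $\alpha = \tfrac12$ collapse to
\begin{align*}
R'(\tfrac12)\, R(\tfrac12)^T + R(\tfrac12)\, (R(\tfrac12)^T)' = \frac{d}{d\alpha}\bigl[R(\alpha)\, R(\alpha)^T\bigr]\bigg|_{\alpha=\tfrac12} = 0,
\end{align*}
because $R(\alpha) R(\alpha)^T = \Id$ holds identically in $\alpha$. This yields the lemma. The computation is essentially routine; the only step requiring a moment's thought is the identification $U_j(\tfrac12) = \Id$, which is immediate from the explicit formulas, so I anticipate no substantial obstacle.
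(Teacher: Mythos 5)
Your proof is correct, and the overall strategy — product rule, then show the two outer terms cancel — is the same as the paper's. The difference is in how the cancellation is justified. The paper's proof cites the explicit form $Q'|_{\alpha=\frac12} = c\begin{pmatrix} 0 & 1 \\ -1 & 0\end{pmatrix}Q|_{\alpha=\frac12}$ together with the fact that $[e(U_j(\alpha))]'|_{\alpha=\frac12}$ lies in $\frac{1}{\cos(\pi/n)}(O(2)\setminus SO(2))\cap\R^{2\times 2}_{sym}$, and then appeals to commutation relations for rotations and reflections (the paper's phrase ``the first and second contributions \dots cancel'' appears to be a slip for ``first and third''). Your justification is cleaner and more direct: you observe that $a(\tfrac12)=1$ forces $U(1)=\Id$, hence $U_j(\tfrac12)=\Id$ for every $j$ by the conjugation formula \eqref{defDu}, so $e(U_j(\tfrac12))=\Id$ and the two boundary terms collapse to $R'R^T + R(R^T)' = \frac{d}{d\alpha}(RR^T) = 0$. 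This sidesteps any commutation argument and makes no use of the structure of $[e(U_j(\alpha))]'$ at all — only the orthogonality of $R(\alpha)$ and the degeneration of the construction at $\alpha=\tfrac12$. I would call your route a modest but genuine simplification of the paper's argument.
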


As a consequence and as expected, it does not matter in which coordinates we consider the geometric linearisation of the problem at hand. Hence, in the sequel, without further comment, we will always consider the linearisation in the coordinates $(e_{11} \ e_{11}^{\perp})\big|_{\alpha = \frac{1}{2}}$ .

\begin{proof}
We show that for a general rotation $Q$ which depends differentiably on the parameter $\alpha$, we have
\begin{align*}
\frac{d}{d\alpha} \left[ Q e(U_j(\alpha)) Q^T \right]\big|_{\alpha = \frac{1}{2}}
=  Q \big|_{\alpha = \frac{1}{2}} \left( \frac{d}{d\alpha}  e(U_j(\alpha)) \big|_{\alpha=\frac{1}{2}} \right) Q^T \big|_{\alpha = \frac{1}{2}}.
\end{align*}
Indeed, this is a direct consequence of the product rule. Denoting derivatives with respect to $\alpha$ by a dash, we obtain
\begin{align}
\label{eq:deriv_alpha}
\begin{split}
\frac{d}{d\alpha} \left[ Q e(U_j(\alpha)) Q^T \right]\big|_{\alpha = \frac{1}{2}}
&= Q'\big|_{\alpha = \frac{1}{2}} e(U_j(\alpha))\big|_{\alpha = \frac{1}{2}} Q^T\big|_{\alpha = \frac{1}{2}}\\
& \quad + Q\big|_{\alpha = \frac{1}{2}} [e(U_j(\alpha))]'\big|_{\alpha = \frac{1}{2}} Q^T\big|_{\alpha = \frac{1}{2}} \\
& \quad + Q\big|_{\alpha = \frac{1}{2}} e(U_j(\alpha))\big|_{\alpha = \frac{1}{2}} (Q^T)'\big|_{\alpha = \frac{1}{2}}.
 \end{split}
\end{align}
Now, using that
\begin{align*}
Q'\big|_{\alpha = \frac{1}{2}} =c \begin{pmatrix} 0 & 1 \\ -1 & 0 \end{pmatrix} Q\big|_{\alpha = \frac{1}{2}},
\end{align*}
and the fact that $[e(U_j(\alpha))]'\big|_{\alpha = \frac{1}{2}} \in \frac{1}{\cos(\frac{\pi}{n})} (O(2)\setminus SO(2))\cap \R^{2\times 2}_{sym}$, implies by the commutation relations for rotations and reflections that the first and second contributions in \eqref{eq:deriv_alpha} cancel. Thus, we obtain the desired result.
\end{proof}

As a direct consequence of the non-linear constructions from the previous section, we obtain the following geometrically linearised Conti constructions:

\begin{prop}
\label{cor:rank_one}
Let $u_{\alpha}: \R^2 \rightarrow \R^2$ be a non-linear deformation associated with a non-linear Conti construction with $\alpha \in (0,1)$. Then the function $v_0:=\frac{d}{d\alpha} u_{\alpha} \big |_{\alpha=\frac{1}{2}}: \R^2 \rightarrow \R^2$ is a displacement vector field for a geometrically linear Conti construction, i.e. it is a piecewise affine, continuous vector field, which has constant gradient on the triangles $T_1,\dots, T_{2n}$. The symmetrised gradients involved in the linearised construction are given by the matrices $E_1,\dots,E_{2n}$ corresponding to the linearisations and symmetrisations of $U_1(a(\alpha)),\dots, U_{2n}(a(\alpha))$. In the exterior of the polygon $\Omega_n^E$ and in the polygon $\Omega_n^I$, the displacement gradient is a skew matrix.
\end{prop}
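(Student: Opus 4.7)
The plan is to argue directly by differentiating in $\alpha$ the explicit formulas for the family $u_\alpha$ provided by Proposition \ref{prop:suff}, exploiting that at $\alpha = 1/2$ the construction degenerates: by \eqref{EqA} we have $a(1/2) = 1$, hence by \eqref{eq:U1} $U(a(1/2)) = \Id$, the rotation $R_*$ from \ref{H4} has angle $\rho_n = 0$, and (up to the fixed rotation $R_E$, which is likewise trivial in the limit) $u_{1/2}$ is the identity map on $\R^2$. Thus $u_\alpha$ is a small perturbation of the identity near $\alpha = 1/2$, and $v_0 := \frac{d}{d\alpha} u_\alpha \big|_{\alpha = 1/2}$ plays the role of the leading-order displacement.

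First I would verify that $v_0$ is affine on each triangle $T_i$ (understood at the parameter $\alpha = 1/2$) with the claimed gradient. Pick $x$ in the interior of $T_i = T_i(1/2)$; since the vertices $I_k(\alpha)$ depend smoothly on $\alpha$, $x$ lies in $T_i(\alpha)$ for all $\alpha$ in a neighbourhood of $1/2$. On $T_i(\alpha)$ the deformation is affine, $u_\alpha(y) = U_i(\alpha) y + c_i(\alpha)$, with $U_i(\alpha)$ obtained from $U(a(\alpha))$ by the conjugation rule \eqref{defDu} and $c_i(\alpha)$ a translation smooth in $\alpha$. Differentiating at $\alpha = 1/2$ yields
\begin{equation*}
v_0(x) = U_i'(1/2)\, x + c_i'(1/2),
\end{equation*}
so $\nabla v_0|_{T_i} = U_i'(1/2)$ is constant, and its symmetric part coincides with $E_i := \frac{d}{d\alpha} e(U_i(\alpha))|_{\alpha=1/2}$ by the very definition given before \eqref{eq:E1}. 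The fact that $E_i$ is obtained by the same symmetry-group conjugation from $E_1$ follows from \eqref{defDu} together with Lemma \ref{lem:coord}, which ensures that symmetrization commutes with conjugation by the fixed orthogonal matrices of the group and is insensitive to the $\alpha$-dependent choice of basis $\{e_{11}, e_{11}^\perp\}$.

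Next I would address continuity of $v_0$ across the edges of the fixed triangulation at $\alpha = 1/2$. For each $\alpha$ near $1/2$ the continuity of $u_\alpha$ across the moving edges gives matching identities between the affine pieces; upon $\alpha$-differentiation at $\alpha = 1/2$ these pass to the matching of the affine extensions of $v_0|_{T_i}$ and $v_0|_{T_{i+1}}$ along the limiting edge. The only subtlety is that the edges themselves move with $\alpha$, but since $u_\alpha$ is uniformly Lipschitz in $x$ and smooth in $\alpha$ on the interior of each triangle, the boundary variation enters only at higher order and the two one-sided limits of $v_0$ on each fixed edge agree. This is essentially the only genuine technical obstacle in the argument; once it is handled, the rest is bookkeeping.

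Finally, the behaviour on $\R^2 \setminus \Omega_n^E$ and $\Omega_n^I$ is immediate. On the exterior, $u_\alpha$ coincides with $R_E(\alpha)\,\cdot$ for every $\alpha$, so $\nabla v_0$ there equals $R_E'(1/2)$, the derivative of a smooth family of rotations through $\Id$, which is skew. On $\Omega_n^I$, $u_\alpha(x) = R_*(\alpha) x$ with $R_*(\alpha)$ a rotation of angle $\frac{2\pi}{n}(1-2\alpha)$, so $R_*(1/2) = \Id$ and $\nabla v_0 = R_*'(1/2)$ is again a skew matrix, completing the proof.
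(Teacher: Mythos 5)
Your approach mirrors the paper's exactly in structure: differentiate the explicit non-linear construction at $\alpha=\tfrac12$, where it degenerates to the identity ($a(1/2)=1$, $U(a(1/2))=\Id$, $R_*=\Id$), record the piecewise-affine structure of $v_0$ with gradients $U_i'(1/2)$, and read off the skew gradients outside $\Omega_n^E$ and inside $\Omega_n^I$. The one step that is not quite right is the continuity argument, which you correctly flag as the technical core but then dispatch with a reason that does not hold. You claim that "the boundary variation enters only at higher order." It does not. Differentiating the matching identity $u_\alpha^+(\ell_\alpha(x)) = u_\alpha^-(\ell_\alpha(x))$ in $\alpha$ produces, via the chain rule, the genuinely first-order contribution $\nabla u_{1/2}^\pm(x)\cdot \ell'(x)$ on each side, where $\ell'(x):=\frac{d}{d\alpha}\ell_\alpha(x)|_{\alpha=1/2}$ is the velocity of the moving edge. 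Lipschitz continuity in $x$ and smoothness in $\alpha$ on each fixed triangle do not make this term small.

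The correct reason these boundary contributions drop out — and what the paper's proof establishes — is a cancellation, not a smallness estimate: the edge is a single moving curve, so its $\alpha$-velocity agrees from both sides ($\ell_{1,+}'(x)=\ell_{1,-}'(x)$), and $\nabla u_{1/2}$ agrees from both sides as well (indeed $\nabla u_{1/2}\equiv \Id$, since $u_{1/2}=\Id$). Hence $\nabla u_{1/2}^+(x)\cdot\ell_{1,+}'(x) = \nabla u_{1/2}^-(x)\cdot\ell_{1,-}'(x)$, the two first-order boundary terms are equal, and their difference vanishes, leaving precisely $v_0^+(x)-v_0^-(x)=0$. If you replace "enters only at higher order" with this explicit cancellation, your proof coincides with the one given in the paper.
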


\begin{proof}
We first note that, by the explicit expressions from Section \ref{sec:nonlinear_n_well} for any $\alpha>0$ the deformation $u_{\alpha}$ depends differentiably on the parameter $\alpha$. Thus, in order to prove that $v_0$ is a displacement for the geometrically linear Conti construction, it suffices to show that $v_0$ is continuous along the sides of the triangles $T_1,\dots,T_{2n}$. Let $\ell_{\alpha}:\R^2 \rightarrow \R^2$ be a line segment with normal $\nu_{\alpha} \in \R^2$ describing one of the edges of the triangles $T_1,\dots,T_{2n}$. Let $x \in \ell_{\alpha}$ and denote by $\ell_{\alpha}^+(x)$ denote the limit of points $y\in \R^2$ with $y\cdot\nu_{\alpha} \geq 0 $ and $y \rightarrow x$. Define $\ell_{\alpha}^{-}(x)$ similarly. Then, by continuity of $u_{\alpha}$ for all $\alpha>0$ we in particular have that for all $x\in \ell_{\alpha}$
\begin{align*}
u_{\alpha}(\ell_{\alpha}^+(x)) - u_{\alpha}(\ell_{\alpha}^-(x))=0.
\end{align*}
As a consequence, 
\begin{align*}
\left.\frac{d}{d\alpha}\left[ u_{\alpha}(\ell_{\alpha}^+(x)) - u_{\alpha}(\ell_{\alpha}^-(x))  \right]  \right|_{\alpha = \frac{1}{2}}  = 0.
\end{align*}
By the product rule this however turns into
\begin{align}
\label{eq:cont}
0=\left[ v_0(\ell_1^+(x)) -  v_0 (\ell_1^-(x)) \right] + \left[ u_1(x)(\ell_{1,+}'(x))- u_1(x)(\ell_{1,-}'(x))\right],
\end{align}
where $\ell_{1,\pm}'(x) := \left. \left[ \frac{d}{\alpha} \ell_{\alpha}^{\pm}(x) \right] \right|_{\alpha = \frac{1}{2}}$. By the $C^1$ continuity of $\ell_{\alpha}(x)$ we however have $\ell_{1,+}'(x) = \ell_{1,-}'(x)$. Hence, the continuity of $u_1$ implies that \eqref{eq:cont} turns into
\begin{align*}
0=\left[ v_0(\ell_1^+(x)) -  v_0 (\ell_1^-(x)) \right].
\end{align*}
This is the claimed continuity of $v_0$ along the edges of the triangles.
\end{proof}

\begin{rmk}
As a direct consequence of the derivation of the linear displacement $u_{\alpha}$ from the non-linear constructions from Section \ref{sec:nonlinear_n_well}, we also obtain the symmetrised rank-one directions from the rank-one directions of the non-linear problem:
Let $U_1 - U_2 = \frac{\cos(\phi_n/2)}{\sin(\phi_n/2)} e_1 \otimes e_2$. Then the matrices $E_1,E_2$ obtained as above, satisfy
\begin{align*}
E_1-E_2 = \frac{\cos(\phi_n/2)}{\sin(\phi_n/2)}e( e_1 \otimes e_2).
\end{align*}
\end{rmk}

\subsection{Remarks on the number of wells}
\label{sec:number_wells}

We now seek to investigate the geometrically linearised Conti type constructions from Proposition \ref{cor:rank_one} in more detail. In particular, it will turn out that in contrast to the geometrically non-linear setting, in the geometrically linearised setting only $n$ wells are needed for a single onion layer construction, \emph{independently} of whether $n$ is odd or even (we recall that in the geometrically non-linear setting $2n$ wells were needed if $n$ was odd). 
This follows from Corollary \ref{cor:rank_one}, the values of the strains which are used there and the interaction of the linearisation with the symmetry group $\mathcal{P}_n$. Although this also directly follows by combining the results from Section \ref{sec:finite_n} with the linearisation procedure, we give an independent proof which highlights the structure of the linear wells.
In the next section, we will then study the iterability of the single onion ring layer constructions in the geometrically linearised setting.

\begin{lem}
\label{lem:lin_vs_nonlin}
Let $n\in \N$, $n\geq 3$ be odd and let 
\begin{align}
\label{eq:dif_in_lin}
K= \{ R E_1 R^T: \ R \in \hat{\mathcal{P}}_n\},
\end{align}
where $\hat{\mathcal{P}}_n:= \hat{\mathcal{R}}_1^n \cup \hat{\mathcal{R}}_2^n$ is defined as in \eqref{eq:hatP}.
Then the single layer Conti construction obtained in Proposition \ref{cor:rank_one} is such that exactly $n$ different strains are used. More generally, the set of linearised energy wells $K$ consists of exactly $n$ different wells, i.e. $\# K = n$. 
\end{lem}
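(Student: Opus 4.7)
The strategy is to apply the orbit--stabilizer theorem to the conjugation action $R\mapsto RE_1R^T$ of $\hat{\mathcal{P}}_n$ on $E_1$. Since $|\hat{\mathcal{P}}_n|=2n$, the claim $\#K=n$ is equivalent to showing that the stabilizer of $E_1$ in $\hat{\mathcal{P}}_n$ has order exactly $2$.

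First, I would record that $E_1$ is traceless with characteristic polynomial $\lambda^2-(1+\cot^2(\phi_n/2))$ and hence distinct eigenvalues $\pm 1/\sin(\phi_n/2)$. A standard linear algebra argument then gives that the full $O(2)$-stabilizer of $E_1$ under conjugation is the Klein four-group $\{\pm I,\pm\sigma\}$, where $\sigma$ denotes reflection across either eigenvector axis of $E_1$ (reflection across the perpendicular axis equals $-\sigma$). Intersecting this with $\hat{\mathcal{P}}_n$, I observe that for odd $n$ the element $-I$ is neither in $\hat{\mathcal{R}}_1^n$ (whose rotation angles are $2\pi j/n$ and therefore never equal $\pi$ modulo $2\pi$) nor in the orientation-reversing coset $\hat{\mathcal{R}}_2^n$; hence $-I\notin\hat{\mathcal{P}}_n$. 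It then remains to decide whether precisely one of $\sigma$, $-\sigma$ lies in $\hat{\mathcal{P}}_n$.

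The reflections inside $\hat{\mathcal{P}}_n$ are exactly those across lines at angles $\pi j/n$, $j=0,\dots,n-1$, which are distinct modulo $\pi$. I would compute the eigenvector directions of $E_1$ directly from $(1-\lambda)v_1=\cot(\phi_n/2)\,v_2$: writing $\beta:=\phi_n/2=\pi/2-\pi/n$ and applying the half-angle identities $1-\sin\beta=2\sin^2(\pi/4-\beta/2)$ and $\cos\beta=2\sin(\pi/4-\beta/2)\cos(\pi/4-\beta/2)$, the two eigenvector axes come out at angles $-\pi/(2n)$ and $(n-1)\pi/(2n)$. The first is never of the form $\pi j/n\pmod\pi$ (equivalently, $2j+1$ would have to be a multiple of $2n$, impossible for $n\geq 1$); the second coincides with $\pi j/n$ for $j=(n-1)/2$, which is a valid integer precisely because $n$ is odd. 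Hence the stabilizer has order exactly $2$, and orbit--stabilizer gives $\#K=2n/2=n$.

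For the first statement of the lemma, I would then connect $K$ to the strains of the single-layer construction. By \eqref{defDu} and the linearisation procedure of Proposition \ref{cor:rank_one}, the linearised strain on the triangle $T_{2k-1}$ equals $Q_{k-1}E_1Q_{k-1}^T$ and that on $T_{2k}$ equals $Q_{k-1}P_0E_1P_0Q_{k-1}^T$; here $P_0=\diag(1,-1)$ in the basis $(e_{11},e_{11}^\perp)$ lies in $\hat{\mathcal{R}}_2^n$ and the $Q_{k-1}$ run through $\hat{\mathcal{R}}_1^n$. Thus the collection of linearised strains is precisely the orbit $\hat{\mathcal{P}}_n\cdot E_1 = K$, which has cardinality $n$. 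The main technical obstacle is the explicit trigonometric identification of the eigenvector angles and the accompanying parity observation that $(n-1)/2\in\mathbb Z$ exactly when $n$ is odd; this is the single arithmetic fact that distinguishes the odd-$n$ case treated here from the (structurally different) even-$n$ case.
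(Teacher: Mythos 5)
Your proof is correct, and it takes a genuinely different route from the paper's. The paper argues geometrically in trace-free strain space: it identifies each strain with a point $(\alpha,\beta)$, shows that conjugating by a rotation of angle $\varphi$ rotates this point by $2\varphi$ (so that, because $n$ is odd, the rotation orbit of $E_1$ fills out a regular $n$-gon with $n$ distinct vertices), and then observes via the explicit computations \eqref{eq:mat1}--\eqref{eq:mat2} and the presence of $c\,\mathrm{diag}(-1,1)$ as a vertex that the $n$-gon is symmetric across the $\alpha$-axis, so the reflection orbit merely reproduces the rotation orbit. Your approach instead invokes orbit--stabilizer directly: since $E_1$ has two distinct eigenvalues $\pm 1/\sin(\phi_n/2)$, its $O(2)$-stabilizer is the Klein four-group $\{\pm I,\pm\sigma\}$, and the only nontrivial element of this group lying in $\hat{\mathcal{P}}_n$ for odd $n$ is the reflection across the eigenvector axis at angle $(n-1)\pi/(2n)=\pi\cdot\frac{n-1}{2}/n$; your arithmetic ($-I\notin\hat{\mathcal{P}}_n$ for odd $n$, the other axis $-\pi/(2n)$ is never of the form $\pi j/n$ modulo $\pi$) is correct, and the half-angle manipulations giving the axis angles check out since $\beta=\phi_n/2=\pi/2-\pi/n$ makes $\pi/4-\beta/2=\pi/(2n)$. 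The two proofs deploy the oddness of $n$ in complementary ways: the paper uses it to guarantee $n$ distinct rotation-orbit vertices, you use it to place exactly one of the two candidate reflections inside $\hat{\mathcal{P}}_n$ and so pin the stabilizer order to $2$. Your argument is somewhat more economical and makes the underlying group theory transparent; the paper's argument is more visual and simultaneously exhibits the $n$-gon structure of the linearised wells (as depicted in Figure \ref{fig:wells_lin}), which is useful information in its own right.
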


\begin{rmk}
\label{rmk:hatcoord}
Here and in the sequel, we work with the symmetry group $\hat{\mathcal{P}}_n$ instead of the group $\mathcal{P}_n$ since we are considering the problem in the $e_{11}, e_{11}^{\perp}$ coordinates.
\end{rmk}

\begin{figure}[t]
\centering
\includegraphics[scale=0.7,page=21]{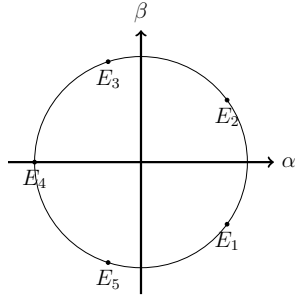}
\caption[The geometrically linearized $n$-well problem for $n=5$.]{The geometrically linearised $n$-well problem for $n=5$. As exploited in the proof of Lemma \ref{lem:lin_vs_nonlin}, the set $K$ can be parametrised through a vector $(\alpha,\beta)$, i.e. each element of $K$ is of the form 
$\begin{pmatrix} \alpha & \beta \\ \beta & -\alpha \end{pmatrix}$ with $\alpha^2 + \beta^2 = const$. 
Hence, by the identities from the properties (i), (ii) in the proof of Lemma \ref{lem:lin_vs_nonlin}, it is possible to visualise the set of wells as a regular $n$-gon as illustrated in this figure.}
\label{fig:wells_lin}
\end{figure}

\begin{proof}
We first prove that $\# K \leq n$, the fact that $\#K = n$ will be a consequence of the argument for this.\\

The symmetry group $\hat{\mathcal{P}}_n$ acts on $K$ by conjugation. In particular, $K$ is obtained as the orbit of $E_1$ under conjugation with elements of $\hat{\mathcal{P}}_n$. As $\hat{\mathcal{P}}_n \subset O(2)$, we more generally consider the conjugation class of the matrix $\begin{pmatrix}1 & d \\ d & -1 \end{pmatrix}$ for $d\in \R$ (which is of the same structure as $E_1$) under $O(2)$. 

Since
\begin{align*}
O(2)& = SO(2)\cup \begin{pmatrix} -1 & 0 \\ 0 & 1\end{pmatrix} SO(2)\\
&=\left\{
\begin{pmatrix}
a & b\\ -b & a
\end{pmatrix} \cup \begin{pmatrix}
a & -b\\ -b &- a
\end{pmatrix}: \ a,b\in \R \mbox{ such that } a^2 + b^2 = 1
 \right\},
\end{align*}
on the one hand, we compute
\begin{align}
\label{eq:mat1}
\begin{pmatrix}
a & b\\ -b & a
\end{pmatrix}
\begin{pmatrix}
1 & d\\
d & -1
\end{pmatrix}
\begin{pmatrix}
a & -b \\ b & a
\end{pmatrix}
= \begin{pmatrix}
a^2 + 2abd -b^2 & -2ab +a^2d - b^2 d \\
-2ab +a^2d - b^2 d & b^2-2abd-a^2
\end{pmatrix}.
\end{align}
On the other hand, we also have
\begin{align}
\label{eq:mat2}
\begin{pmatrix}
a & b\\ b & -a
\end{pmatrix}
\begin{pmatrix}
1 & d\\
d & -1
\end{pmatrix}
\begin{pmatrix}
a & b \\ b & -a
\end{pmatrix}
= \begin{pmatrix}
a^2 + 2abd -b^2 & 2ab -a^2d + b^2 d \\
2ab -a^2d + b^2 d & b^2-2abd-a^2
\end{pmatrix}.
\end{align}
Comparing the matrix in \eqref{eq:mat1} with the one in \eqref{eq:mat2}, we note that the diagonal entries agree, while the off-diagonal ones deviate by a sign. 
Letting $\hat{\mathcal{R}}_1^n:= \hat{\mathcal{P}}_n\cap SO(2)$ and $\hat{\mathcal{R}}_2^n = \hat{\mathcal{P}}_n \setminus \hat{\mathcal{R}}^n_1$, we study the orbit of $E_1$ under the action of $\hat{\mathcal{R}}_1^n$. It has the following properties:
\begin{itemize}
\item[(i)] The orbit of $E_1$ under $\hat{\mathcal{R}}_1^n$ forms a regular $n$-gon in trace-free strain space parametrised as matrices of the form
\begin{align*}
\begin{pmatrix}
\alpha &  \beta \\ \beta & -\alpha
\end{pmatrix}, \ \alpha, \beta \in \R,
\end{align*}
see Figure \ref{fig:wells_lin}.
\item[(ii)] For $c=\frac{1}{\cos(\frac{\pi}{n})}$ the matrix
$c\begin{pmatrix} -1 & 0 \\ 0 & 1 \end{pmatrix}$ 
is an element of this $n$-gon. 
\end{itemize}
Both properties (i) and (ii) follow from trigonometric identities: For (i) we note that as $\begin{pmatrix} \alpha & \beta \\ \beta & - \alpha \end{pmatrix} \in (\alpha^2+\beta^2)(O(2)\setminus SO(2))$ by the commutation relations for rotations and reflections, we have
\begin{align}
\label{eq:strain_space_action}
\begin{split}
&\begin{pmatrix} \cos(\varphi) & - \sin(\varphi) \\ \sin(\varphi) & \cos(\varphi) \end{pmatrix}
\begin{pmatrix} \alpha & \beta \\ \beta & - \alpha \end{pmatrix}
\begin{pmatrix} 
\cos(\varphi) & \sin(\varphi)\\
-\sin(\varphi) & \cos(\varphi)
\end{pmatrix}\\
&= \begin{pmatrix} 
\cos(2\varphi) & -\sin(2\varphi)\\
\sin(2\varphi) & \cos(2\varphi)
\end{pmatrix}
\begin{pmatrix} \alpha & \beta \\ \beta & - \alpha \end{pmatrix}.
\end{split}
\end{align} 
Hence, conjugating a matrix $\begin{pmatrix} \alpha & \beta \\ \beta & - \alpha \end{pmatrix}$ by a rotation with angle $\varphi$ just rotates the matrix $\begin{pmatrix} \alpha & \beta \\ \beta & - \alpha \end{pmatrix}$ by the angle $2\varphi$. As a consequence, we note that as $n$ is odd, the orbit of $E_1$ under $\mathcal{R}_1^n$ is exactly given by a regular $n$-gon (as starting from $E_1$ we first reach all elements of the orbit which are at the even lattice sites of the $n$-gon with respect to the starting point $E_1$ and then after continuing to rotate, we also obtain the odd ones).

In order to deduce the second property (ii), we first study under which conditions the off-diagonal entry in \eqref{eq:mat1} vanishes. In order to simplify notation, we set $a=\cos(\varphi)$, $b=\sin(\varphi)$ with $\varphi= \frac{2\pi j}{n}$, $j\in \Z$, and $d= \cot(\phi_n/2)$, and note that then
\begin{align}
\label{eq:off_diag}
2 a b - a^2 d +b^2 d = \frac{1}{\cos\left(\frac{\pi}{n}\right)} \cos\left( \frac{\pi-4j\pi}{n} \right).
\end{align}
In order to prove the claim in (ii), we search for values of $j$ such that this expression vanishes. Hence, we seek an integer $j$ such that
$1-4j \in n \Z$. This is solved by
\begin{align*}
j= \left\{
\begin{array}{ll}
\frac{n+1}{4}, &\mbox{ if } n \equiv 3 \ (\mbox{mod } 4),\\
 \frac{3n+1}{4}, &\mbox{ if } n \equiv 1 \ (\mbox{mod } 4).
\end{array}
 \right.
\end{align*}
The claim (i) then follows as the expression $a^2-2abd-b^2$ in \eqref{eq:mat1} turns into $\frac{\cos(\frac{\pi-4j\pi}{n})}{\cos(\frac{\pi}{n})}= \frac{-1}{\cos(\frac{\pi}{n})}$.

With the properties (i), (ii) at hand, by the symmetry of the $n$-gon, we infer that the orbit of $E_1$ under the action of the group $\hat{\mathcal{R}}_1^n$ contains the matrix $\begin{pmatrix} g & -f \\ -f & -g
\end{pmatrix}$, iff it contains the matrix 
$\begin{pmatrix} g & f \\ f & -g
\end{pmatrix}$. 
In particular, this implies that if a matrix of the form \eqref{eq:mat1} is contained in the orbit of $E_1$ under $\hat{\mathcal{R}}_1^n$, also the corresponding matrix in \eqref{eq:mat2} is already contained in the orbit of $E_1$ under $\hat{\mathcal{R}}_1^n$. As a consequence, the orbit of $E_1$ under $\hat{\mathcal{R}}_2^n$ does not contain new information and $\# K \leq n$. 

The observation that $\# K \geq n$ is a direct consequence of property (i) from above, which thus yields $\# K = n$ and which concludes the argument.
\end{proof}

\subsection{Constructions for finite $n$ in the geometrically linear framework}
\label{sec:finite_n}

In this section, we discuss the concatenated structures that are obtained from linearising the geometrically non-linear $n$-well constructions from Section \ref{sec:iteration_layer} for a finite value of $n$ at $\alpha = \frac{1}{2}$. As a direct consequence of the properties of the geometrically non-linear deformations from Section \ref{sec:nonlinear_n_well}, we obtain the following facts:

\begin{itemize}
\item[(i)] The solutions in each ``onion layer ring" can be iterated in such a way that the overall construction only involves $n$ symmetrised deformation gradients. It comes from a single phase transformation.
\item[(ii)] The resulting iterated structures are highly symmetric and recover and generalise the experimentally observed tripole star type deformations (see the discussion in Section \ref{sec:stars}). In particular, the incompatibility of these patterns is a purely non-linear effect, which is not captured by the linearised theory.
\end{itemize}

\begin{cor}
\label{prop:star_def}
Let $v_{n,\alpha}: \Omega_n \rightarrow \R^2$ denote the deformations constructed in Section \ref{sec:iteration_layer} (for $\alpha \in (0,1)$). Then, the deformations $\tilde{v}_n(x):= \frac{d}{d\alpha} v_{n,\alpha}(x)|_{\alpha =\frac{1}{2}}$ are exactly stress-free deformations which attain only $n$ values for their symmetrised deformation gradients which are all related by the action of the symmetry group, i.e. for almost every $x\in \Omega_n$
\begin{align*}
e(\nabla \tilde{v}_{n})(x) \in \left\{E_1,\dots,E_n\right\}:=\left\{Q_j E_1 Q_j^T:  \ j \in \{0,\dots,n-1\}\right\},
\end{align*}
where as in Section \ref{sec:nonlinear_n_well} we use the notation $Q_j:=Q\left( \frac{2\pi}{n} j \right)$ and $Q(\gamma)= \begin{pmatrix} \cos(\gamma)& -\sin(\gamma) \\ \sin(\gamma) & \cos(\gamma)\end{pmatrix}$.
Moreover, the following symmetry assertions hold true:
\begin{itemize}
\item[(i)] If $n=2k+1, \ k \in \N,$ we have for all $j \in\{1,\dots,n\}$
\begin{align*}
Q_{\frac{n-1}{2}} P_0 E_j P_0 Q_{\frac{n-1}{2}} ^T = E_j ,
\end{align*}
and if $n=2k$, $k \in \N$, we have for all $j \in\{1,\dots,n\}$
\begin{align*}
Q_{\frac{n}{2}} E_j Q_{\frac{n}{2}}^T = E_j.
\end{align*}
\item[(ii)] For all $j\in\{1,\dots,n\}$ we have
\begin{align*}
R_{\frac{1}{2}} E_{j-1} R_{-\frac{1}{2}} = Q_1 E_j Q_1^T .
\end{align*}
\end{itemize}
\end{cor}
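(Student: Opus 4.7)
The plan is to linearize the nonlinear iteration $v_{n,\alpha}$ from Section \ref{sec:iteration_layer} at $\alpha=\tfrac12$, following the strategy of Proposition \ref{cor:rank_one}. Since each ingredient of \eqref{eq:vn} (the rotations $R_*$, $Q_\alpha$, the internal radius $r_I$, and $U(a(\alpha))$) depends smoothly on $\alpha$, and since $v_{n,\alpha}$ is piecewise affine and continuous across every triangle and every layer interface for each $\alpha\in(0,1)$, applying the product-rule argument of Proposition \ref{cor:rank_one} edge-by-edge and layer-by-layer shows that $\tilde v_n:=\tfrac{d}{d\alpha}v_{n,\alpha}|_{\alpha=1/2}$ is a well-defined continuous, piecewise affine displacement with constant symmetrized gradient on each triangle of each onion ring.

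To compute these symmetrized gradients, I would evaluate \eqref{eq:vn} on the $k$-th layer triangle corresponding to $T_j$: the gradient is $R_E R_*^k Q_\alpha^k \nabla u|_{T_j}(\alpha) Q_{-\alpha}^k$. At $\alpha=\tfrac12$ one has $a(\tfrac12)=1$ by \eqref{EqA}, hence $U=\mt{Id}$, $R_*=\mt{Id}$, and the whole gradient collapses to the identity. Differentiating in $\alpha$, $\dot R_*^k|_{1/2}$ is skew (as $R_*^k$ is a smooth path of rotations through $\mt{Id}$), and the two rotation-derivative contributions produced by $Q_\alpha^k$ and $Q_{-\alpha}^k$ combine to another skew matrix, since the infinitesimal generator of $SO(2)$ commutes with every rotation. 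After symmetrization only the derivative of $\nabla u|_{T_j}(\alpha)$ survives, yielding
\begin{equation*}
e(\nabla \tilde v_n)\big|_{T_j^{(k)}} \;=\; Q_{1/2}^{k}\,\tilde E_j\,(Q_{1/2}^{k})^T,
\end{equation*}
with $\tilde E_j$ equal to $E_1$ or $P_0 E_1 P_0$ according to the parity of $j$ (cf.\ \eqref{eq:E1}).

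That these values lie in the $n$-element set $\{Q_j E_1 Q_j^T : j=0,\dots,n-1\}$ is then immediate from Lemma \ref{lem:lin_vs_nonlin}: by \eqref{eq:strain_space_action}, conjugating a trace-free symmetric matrix $\bigl(\begin{smallmatrix} a & b \\ b & -a\end{smallmatrix}\bigr)$ by a rotation of angle $\pi k/n$ amounts to a rotation by $2\pi k/n$ in strain space, i.e.\ a shift within the regular $n$-gon of wells, while $P_0 E_1 P_0$ itself already belongs to that $n$-gon. Finally, the symmetry identities (i) and (ii) are obtained by linearizing their nonlinear counterparts from Corollary \ref{cor:iterate}: for (i) one takes the symmetric part of the $\alpha$-derivative at $\tfrac12$ of \eqref{claimOdd} (odd $n$) or of the even-$n$ assertion in \eqref{eq:i}, using $e(U^T)=e(U)$; for (ii) one does the same with \eqref{IterPos}, again observing that the $R_*$- and $Q_\alpha$-derivatives contribute only skew terms which vanish under symmetrization. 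The main technical subtlety is this cancellation of skew contributions together with the bookkeeping between the $2n$-per-triangle indexing and the $n$-well indexing, which is essentially what Lemma \ref{lem:lin_vs_nonlin} provides.
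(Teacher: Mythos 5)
Your proof is correct and for the inclusion part it essentially mirrors the paper's own argument, but it takes a genuinely different route for the symmetry assertions (i) and (ii). For the inclusion, both you and the paper differentiate the iterated deformation \eqref{eq:vn} at $\alpha=\tfrac12$, where $a=1$, $U=Id$ and $R_*=Id$, so that after symmetrization only the conjugated derivative $Q_{1/2}^k\,e(\nabla u)'|_{1/2}\,Q_{-1/2}^k$ survives on the $k$-th layer; membership in the $n$-gon of wells then follows from \eqref{eq:strain_space_action} and Lemma~\ref{lem:lin_vs_nonlin}. One small clarification on your reasoning: the two rotation-derivative contributions coming from $Q_\alpha^k$ and $Q_{-\alpha}^k$ do not merely combine to something skew -- they cancel exactly, being the $\alpha$-derivative of $Q_\alpha^k Q_{-\alpha}^k=Id$; it is this cancellation, together with the skewness of $\dot R_E$ and $\dot R_*^k$ at $\alpha=\tfrac12$, that makes the symmetrization clean. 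Where you depart from the paper is in (i) and (ii): you propose to symmetrize the $\alpha$-derivatives of the nonlinear identities \eqref{claimOdd}, \eqref{eq:i} and \eqref{IterPos} from Corollary~\ref{cor:iterate}, whereas the paper verifies (i) and (ii) directly at the level of the strains using $P_0 E_1 P_0=E_2$, the double-angle conjugation rule \eqref{eq:strain_space_action}, and the cyclic structure $E_j=Q_{j-1}E_1 Q_{j-1}^T$. Both routes work; in fact the paper explicitly acknowledges yours as available (``Although this result is a direct consequence of the corresponding properties of the geometrically non-linear problem \dots we reprove these here, as the geometrically linear setting allows for significant computational simplifications''). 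Your approach is conceptually uniform -- a single linearization principle applied everywhere -- but it leaves implicit the step of extending the nonlinear identities, stated only for $i=1$, to general $j$, as well as the translation between the $2n$-triangle indexing of the nonlinear problem and the $n$-well indexing of the linearized one; the paper's direct algebra makes exactly this bookkeeping explicit.
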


Although this result is a direct consequence of the corresponding properties of the geometrically non-linear problem (see Section \ref{sec:finite_n}), we reprove these here, as the geometrically linear setting allows for significant computational simplifications compared to the geometrically non-linear situation.

\begin{proof}
The fact that 
\begin{align*}
e(\nabla \tilde{v}_n)\in \{E_1,\dots,E_n\}:=\left\{Q_j E_1 Q_j^T: \ j \in \{0,\dots,n-1\}\right\} 
\end{align*}
follows from the observation that the symmetrised gradients are obtained by linearisation of the iterated non-linear construction (here $Q_{\alpha}$ denotes the rotation from Section \ref{sec:finite_n}). Indeed, by the same considerations as in Lemma \ref{lem:coord} we infer that
\begin{align*}
&\frac{d}{d\alpha} \left( Q_{\alpha} e(\nabla v_{\alpha,n}) Q_{\alpha}^T \right)|_{\alpha = \frac{1}{2}}\\
&= (Q_{\alpha}' e(\nabla v_{\alpha,n}) Q_{\alpha}^T)|_{\alpha = \frac{1}{2}}
+ (Q_{\alpha} e(\nabla v_{\alpha,n})' Q_{\alpha}^T)|_{\alpha = \frac{1}{2}}
+ (Q_{\alpha} e(\nabla v_{\alpha,n}) (Q_{\alpha}')^T)|_{\alpha = \frac{1}{2}}\\
&= (Q_{\alpha}')|_{\alpha = \frac{1}{2}} e(\nabla v_{\frac{1}{2},n}) R_{\frac{1}{2}}^T
+ R_{\frac{1}{2}} e(\nabla \tilde{v}_{\frac{1}{2},n}) R_{\frac{1}{2}}^T
+ R_{\frac{1}{2}} e(\nabla v_{\frac{1}{2},n}) (Q_{\alpha}')^T|_{\alpha = \frac{1}{2}}\\
&= R_{\frac{1}{2}} e(\nabla \tilde{v}_{\frac{1}{2},n}) R_{\frac{1}{2}}^T.
\end{align*}
Here the dash denotes differentiation with respect to $\alpha$; moreover, we used that 
\begin{align*}
e(\nabla v_{\frac{1}{2},n})= Id, \ Q_{\alpha}' = \begin{pmatrix} 0 & 1\\ -1 & 0 \end{pmatrix}Q_{\alpha}. 
\end{align*}
As $R_{\frac{1}{2}} \in \hat{\mathcal{P}}_n$, this proves the claim on the inclusion.

In order to prove (i), by symmetry it suffices to prove the claim for $j=1$ and $j=2$. Since $Q_{\frac{n}{2}}= Q(\pi)$, the result is straightforward for $j=2$. We thus focus on the case $j=1$ for which we need to prove that
\begin{align*}
Q_{\frac{n-1}{2}} P_0 E_1 P_0 Q_{\frac{n-1}{2}}^T= E_1.
\end{align*}
This however follows from the following observations:
\begin{itemize} 
\item By the explicit form of $E_1$, we have $E_1 \in \frac{1}{\sin(\frac{\pi}{n})}SO(2)$, whence by the  commutation relations for reflections and rotations,
\begin{align*}
P_0 E_1 P_0 = E_2,
\end{align*}
(the action of $P_0$ just flips the sign in the off-diagonal component). 
\item By a similar reasoning (see \ref{eq:strain_space_action}) we then also obtain that
\begin{align*}
Q_{\frac{n-1}{2}} E_2 Q_{\frac{n-1}{2}}^T
= Q_{n-1} E_2 .
\end{align*}
\item
By the structure of the set of $E_j$, we however have $Q_{n-1} E_2 = E_1$ (more generally, we have $E_j = Q_j  E_1$ for all odd $j$).
\end{itemize}
As a consequence, by combining the previous observations
\begin{align*}
Q_{\frac{n-1}{2}} P_0 E_1 P_0  Q_{\frac{n-1}{2}}^T
&= Q_{\frac{n-1}{2}} E_2  Q_{\frac{n-1}{2}}^T
= Q_{n-1} E_2 = E_1,
\end{align*}
which yields the desired result.

Finally, we provide the argument for (ii): Again we consider only the case $j=1$ and $j=2$. Considering first the case $j=1$, we note that 
\begin{align*}
R_{\frac{1}{2}} E_2 R_{-\frac{1}{2}} = R_{\frac{1}{2}}P_0 E_1 P_0 R_{-\frac{1}{2}}
= Q_{\frac{1}{2}}P_0 E_1 P_0 Q_{\frac{1}{2}}^T.
\end{align*}
It hence suffices to prove that
\begin{align*}
Q_1 E_1 Q_1^T
= Q_{\frac{1}{2}} P_0 E_1 P_0 Q_{\frac{1}{2}}^T.
\end{align*}
This however is equivalent to
\begin{align*}
Q_1 E_1 Q_1^T
= P_0 E_1 P_0 = E_2 .
\end{align*}
Since by \eqref{eq:strain_space_action}, we have 
\begin{align*}
Q_{\frac{1}{2}} E_1 Q_{\frac{1}{2}}^T = Q_1 E_1 = E_2,
\end{align*} 
the claim follows for $j=1$. The argument for $j=2$ is analogous. 
\end{proof}

\subsection{Limit $n\rightarrow \infty$}
\label{sec:lin_infty}

Similarly as in Section \ref{sec:infty} in the geometrically non-linear set-up, also in the geometrically linearised setting we now consider the limit $n\rightarrow \infty$. In particular, we are then  naturally lead to the same deformation as the one discussed in \cite{ADMDS15} in the context of nematic liquid crystal elastomers. 

\begin{lem}
\label{lem:diff_incl_n_infty}
For $n\rightarrow \infty$, the set $K$ from \eqref{eq:dif_in_lin} turns into 
\begin{align*}
K_{\infty}
&:= \left\{ R \begin{pmatrix} 1 & 0 \\ 0 & -1 \end{pmatrix} R^T: \ R \in O(2), \ c \in \R \right\}\\ 
&= \left\{c\begin{pmatrix} a & b \\ b & - a \end{pmatrix}: \ a^2+b^2 =1, \ c = \frac{1}{\sin(\frac{\pi}{n})} \right\}.
\end{align*}
\end{lem}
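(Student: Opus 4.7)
The proof proposal is a two-step limiting argument that is essentially already prepared by the proof of Lemma~\ref{lem:lin_vs_nonlin}. First, I would recall that for finite $n$ the orbit of $E_1$ under conjugation by $\hat{\mathcal{R}}_1^n$ is a regular $n$-gon inscribed in the circle of trace-free symmetric $2\times 2$ matrices of the form $\begin{pmatrix} \alpha & \beta \\ \beta & -\alpha\end{pmatrix}$. Using $\phi_n/2 = \pi/2 - \pi/n$, one computes $\cot(\phi_n/2) = \tan(\pi/n)$, so the radius of this circle is $\sqrt{1 + \tan^2(\pi/n)} = \sec(\pi/n)$. The last paragraphs of the proof of Lemma~\ref{lem:lin_vs_nonlin} already show that conjugation by the reflection component $\hat{\mathcal{R}}_2^n$ adds no new matrices, so $K$ \emph{is} exactly this regular $n$-gon.

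Next I would identify the two parallel limits occurring as $n\to\infty$:
\begin{itemize}
\item[(i)] $E_1\to\diag(1,-1)$, since the off-diagonal entry $\tan(\pi/n)\to 0$; equivalently, the circumscribing circle has radius $\sec(\pi/n)\to 1$.
\item[(ii)] The rotation angles $\frac{2\pi j}{n}$ with $j\in\{0,\dots,n-1\}$ become dense in $[0,2\pi)$, hence $\hat{\mathcal{R}}_1^n$ accumulates on all of $SO(2)$, and $\hat{\mathcal{P}}_n$ accumulates on $O(2)$.
\end{itemize}
Combining (i) and (ii), the vertices of the $n$-gon densify on the unit circle in trace-free strain space, and the sequence of sets $K$ converges in the Hausdorff sense to $\{R\diag(1,-1)R^T : R\in O(2)\}$, which is the first description of $K_\infty$ stated in the lemma.

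Finally, to obtain the explicit $(a,b)$ parametrization, I would invoke the direct calculation already used in \eqref{eq:strain_space_action}: for $R=Q(\varphi)\in SO(2)$,
\begin{align*}
Q(\varphi)\diag(1,-1)Q(\varphi)^T = \begin{pmatrix} \cos(2\varphi) & \sin(2\varphi) \\ \sin(2\varphi) & -\cos(2\varphi)\end{pmatrix},
\end{align*}
so that setting $(a,b) := (\cos(2\varphi),\sin(2\varphi))$ traces out precisely the matrices $\begin{pmatrix} a & b \\ b & -a\end{pmatrix}$ with $a^2+b^2=1$. Conjugating instead by $R\in O(2)\setminus SO(2)$ merely flips the sign of $b$ via the commutation relations of Remark~\ref{Rem1}, and thus contributes no new elements. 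This yields the second equality in the statement of the lemma.

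The only conceptual point is making the phrase ``turns into'' precise; the natural choice is Hausdorff convergence of compact subsets of $\R^{2\times 2}_{\mathrm{sym}}$, and this is immediate from (i) and (ii). No step is expected to be a serious obstacle, as the structural work (parametrization of $K$ as a regular $n$-gon and irrelevance of the reflections) has already been carried out in Lemma~\ref{lem:lin_vs_nonlin}; the lemma is essentially a passage to the limit in that parametrization.
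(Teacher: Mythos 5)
Your argument is correct and follows essentially the same route as the paper, whose proof is a two-sentence condensation of it: the first identity is obtained by letting $n\to\infty$ in \eqref{eq:E1} (i.e. $\cot(\phi_n/2)=\tan(\pi/n)\to 0$, so $E_1\to\diag(1,-1)$, while $\hat{\mathcal{P}}_n$ fills out $O(2)$), and the second identity is attributed to the explicit form of $O(2)$, which is exactly your computation via \eqref{eq:strain_space_action}. Your extra care (the regular $n$-gon picture from Lemma \ref{lem:lin_vs_nonlin}, the radius $\sec(\pi/n)\to 1$, and Hausdorff convergence to make ``turns into'' precise) simply fleshes out the paper's terse proof.
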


\begin{proof}
The first identity follows from considering $n\rightarrow \infty$ in \eqref{eq:E1}. The second identity is a consequence of the explicit form of $O(2)$.
\end{proof}

As a consequence, the differential inclusion which we study turns into 
\begin{align}
\label{eq:diff_incl}
e(\nabla u) \in K_{\infty}.
\end{align}

Linearising the solution from Proposition \ref{prop:n_infty}, we obtain a two-dimensional solution to the differential inclusion $\eqref{eq:diff_incl}$ with zero boundary conditions:

\begin{prop}
\label{prop:infty_lin}
The function 
\begin{align*}
w(x)= 2(1-\log(r^2))(x_2,-x_1)
\end{align*}
is a solution to the differential inclusion \eqref{eq:diff_incl} in $B_1 \setminus \overline{B_{1/2}}$ and $\nabla u = 0 $ in $\R^n \setminus \overline{B_1}$.
\end{prop}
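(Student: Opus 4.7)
The plan is to verify the statement by a direct pointwise computation, exploiting the skew structure of the ansatz. First I would rewrite $w$ in coordinate-free form as $w(x) = f(r)Jx$, where $J = \bigl(\begin{smallmatrix} 0 & 1 \\ -1 & 0 \end{smallmatrix}\bigr)$ is the rotation by $\pi/2$ and $f(r) := 2(1-\log r^2)$, so that $f'(r) = -4/r$. Differentiating and using $\nabla r = x/r$, the product rule gives
\begin{equation*}
\nabla w(x) = \frac{f'(r)}{r}\,(Jx)\otimes x + f(r)\,J = -\frac{4}{r^{2}}(Jx)\otimes x + f(r)J.
\end{equation*}
The second summand is skew-symmetric and hence drops out under the symmetrization $e(\nabla w) = \tfrac{1}{2}(\nabla w + (\nabla w)^{T})$, so the point is to symmetrize the rank-one term $(Jx)\otimes x$.

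A short $2\times 2$ calculation, using $Jx = (x_{2},-x_{1})^{T}$, yields
\begin{equation*}
e(\nabla w)(x) = \frac{2}{r^{2}}\begin{pmatrix} -2x_{1}x_{2} & x_{1}^{2}-x_{2}^{2} \\ x_{1}^{2}-x_{2}^{2} & 2x_{1}x_{2} \end{pmatrix},
\end{equation*}
which is a traceless symmetric matrix of the form $\bigl(\begin{smallmatrix} \tilde a & \tilde b \\ \tilde b & -\tilde a \end{smallmatrix}\bigr)$ with $\tilde a = -4x_{1}x_{2}/r^{2}$ and $\tilde b = 2(x_{1}^{2}-x_{2}^{2})/r^{2}$. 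The central algebraic observation is then the identity $(x_{1}^{2}-x_{2}^{2})^{2} + 4x_{1}^{2}x_{2}^{2} = (x_{1}^{2}+x_{2}^{2})^{2} = r^{4}$, which immediately shows that $\tilde a^{2} + \tilde b^{2}$ is independent of $x$. Thus $e(\nabla w)(x)$ lies on a single $O(2)$-orbit (under conjugation) of $\operatorname{diag}(1,-1)$, which by the parametrization of $K_{\infty}$ in Lemma \ref{lem:diff_incl_n_infty} is precisely the differential inclusion set. This proves $e(\nabla w)(x) \in K_{\infty}$ for every $x \in B_{1}\setminus\overline{B_{1/2}}$.

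For the exterior matching, I would note that on $\partial B_{1}$ one has $\log r = 0$, so $w|_{\partial B_{1}} = 2Jx$ is a pure infinitesimal rotation. Extending $w$ on $\R^{2}\setminus\overline{B_{1}}$ by the same skew map $2Jx$ yields a continuous displacement whose gradient is skew, so $e(\nabla w)\equiv 0$ there (which is how the condition ``$\nabla u = 0$'' must be read in the geometrically linearised setting, since skew gradients represent infinitesimal rigid rotations and do not contribute to the strain). No genuine obstacle is expected; the only mild subtlety is to absorb the scaling factor $2$ produced by our ansatz into the normalisation constant $c$ appearing in the definition of $K_{\infty}$ in Lemma \ref{lem:diff_incl_n_infty}, which is cosmetic.
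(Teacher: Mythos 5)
Your proof is correct, but it takes a genuinely different route from the paper's. The paper derives the formula for $w$ by computing the $\alpha$-derivative $w = \frac{d}{d\alpha} v_{\alpha}\big|_{\alpha = 1/2}$ of the nonlinear solution family $v_\alpha$ from Proposition \ref{prop:n_infty} (explicitly carrying out the chain-rule computation on the polar representation $v_\alpha(x) = r\,Q(\omega + \rho_0 \log r)\,(2\sqrt{\alpha(1-\alpha)},\,1-2\alpha)$), and then invokes the general linearisation framework — essentially Proposition \ref{cor:rank_one} and Lemma \ref{lem:coord} — to conclude that such a derivative automatically solves the linearised inclusion. You instead take the formula for $w$ as given and verify the inclusion directly: you compute $\nabla w = -\frac{4}{r^2}(Jx)\otimes x + f(r)J$, observe the skew piece drops under symmetrisation, and reduce the inclusion to the identity $(x_1^2 - x_2^2)^2 + 4x_1^2x_2^2 = r^4$, which places $e(\nabla w)$ on a fixed conjugacy orbit of $\diag(1,-1)$ in trace-free strain space. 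Your calculation is correct (including the eigenvalue normalisation $\pm 2$ and the interpretation of ``$\nabla u = 0$'' modulo the skew boundary term $2Jx$). Your approach is more elementary and self-contained — it avoids any reliance on the nonlinear $n\to\infty$ limit — but does not explain the provenance of the ansatz, whereas the paper's route exhibits $w$ as the linearisation of the nonlinear construction and thereby ties the result to the rest of Section \ref{sec:lin_main}.
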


\begin{figure}[t]
\includegraphics[scale=0.5,page=22]{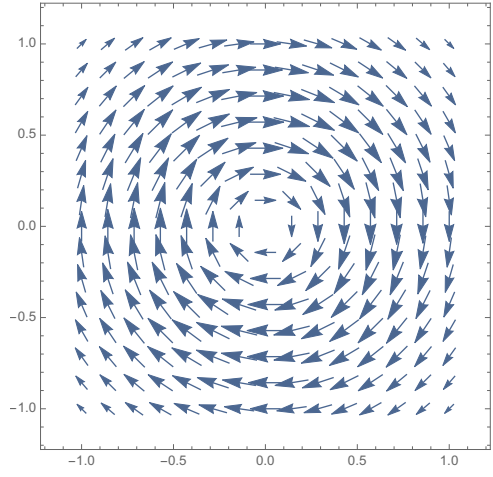}
\caption{A plot of the vector field $w(x)$ from Proposition \ref{prop:infty_lin}.}
\end{figure}

\begin{proof}
The claim follows from the identity
\begin{align*}
w(x_1,x_2):=\frac{d}{d\alpha} v_{\alpha}(x_1,x_2)|_{\alpha=\frac{1}{2}},
\end{align*}
where $v_{\alpha}$ is denotes the family of solutions from Proposition \ref{prop:n_infty} with $\alpha \in (0,1)$, the differential inclusion which is solved by $v_{\alpha}$ and the linearisation arguments from above. 
Indeed, a computation shows that
\begin{align*}
\frac{d}{d\alpha} v_{\alpha}(x_1,x_2)\big|_{\alpha = \frac{1}{2}}
&= r Q'(\omega + \rho_0|_{\alpha = \frac{1}{2}})\rho_0'|_{\alpha = \frac{1}{2}} \log(r) \begin{pmatrix} 1 \\ 0 \end{pmatrix}\\
& \quad + r Q(\omega + \rho_0|_{\alpha = \frac{1}{2}} \log(r)) \begin{pmatrix} 0 \\ -2 \end{pmatrix}\\
& = 4r Q'(\omega) \log(r) \begin{pmatrix}1 \\ 0 \end{pmatrix} + rQ(\omega)\begin{pmatrix} 0 \\ -2 \end{pmatrix}\\
&= 2(1-\log(r^2))\begin{pmatrix} x_2 \\ -x_1 \end{pmatrix}.
\end{align*}
Here $Q'(\omega)$ denotes the derivative of $Q(\omega)$ with respect to $\omega$.
\end{proof}

\begin{rmk}
We note that up to a multiplicative constant and an affine off-set whose gradient is a skew matrix, the function $w(x)$ recovers the special solution from Theorem 2.1 in \cite{ADMDS15}. This was found in \cite{ADMDS15} in the context of convex integration solutions for differential inclusions in  nematic liquid crystal elastomers. In the next section, we establish the connection between the differential inclusion \eqref{eq:diff_incl} and the one associated with two-dimensional liquid crystal elastomers.
\end{rmk}

\subsubsection{Geometrically linear planar solutions for nematic liquid crystal elastomer models}

\label{sec:liquids_lin}

In this section, we recall the modelling of nematic liquid crystal elastomers within the geometrically linearised theory and relate the associated differential inclusion for planar deformation to the differential inclusions, which we have considered in the previous section.

A prominent class of stored energy densities in the modelling of nematic liquid crystal elastomers within the geometrically linearised theory (which can formally be obtained as the linearisation of the non-linear energies) is of the form 
\begin{align}
\label{eq:lin_stored_en}
V(E) = \min\limits_{\hat{n}\in S^2} |E-U_{\hat{n}}|^2, \ U_{\hat{n}} = \frac{1}{2}(3\hat{n} \otimes \hat{n} - I),
\end{align}
where $E \in \R^{3\times 3}_{sym}$ and $\tr(E)=0$, see 
\cite{Ces10}.
Seeking to study energy zero solutions, one is thus lead to the corresponding differential inclusion problem
\begin{align}
\label{eq:diff_incl_lin}
e(\nabla u) \in K_{3D}:=\{E \in \R^{3\times 3}_{sym}; \ \mu_1(E)=\mu_2(E) = - \frac{1}{2}, \ \mu_3(E)=1 \},
\end{align}
where $\mu_j(E)$ denote the ordered eigenvalues of $E$. We note that for affine boundary conditions, the relaxation of this differential inclusion is given by
\begin{align}
\label{eq:relax_lin}
e(\nabla u) \in K_{3D}, \ \nabla u = M \mbox{ for some } M \in K_{3D}^{qc},
\end{align}
where
\begin{align}
\label{eq:Kqc}
K_{3D}^{qc} = \{E \in \R^{3\times 3}_{sym}: \ 
-\frac{1}{2} \leq \mu_1(E) \leq \mu_2(E) \leq \mu_3(E) \leq  1, \ \tr(E)=0 \}.
\end{align}
We refer for this to \cite{ADMDS15}, and also to Chapter 7 in \cite{DaM12}.

An interesting class of deformations is given by planar deformations. These were for instance studied in \cite{CD11}. In searching for energy zero solutions to \eqref{eq:lin_stored_en} with microstructure only in the planar direction, we study the following displacements
\begin{align*}
v(x_1,x_2,x_3)= (\tilde{v}(x_1,x_2),0) + \left[\begin{pmatrix} M' & 0 \\ 0 & m_{33} \end{pmatrix}x \right]^T,
\end{align*}
where $M\in \R^{2\times 2}$ and where $\tilde{v}(x_1,x_2)=0$ on $\partial B_1$, i.e. where the boundary data are encoded in the matrix $M=\begin{pmatrix} M' & 0 \\ 0 & m_{33} \end{pmatrix}$. 
In order to both ensure that $v$ is a solution to the differential inclusion \eqref{eq:relax_lin} and that 
there is interesting microstructure in the problem, we set $m_{33}=-\frac{1}{2}$ and consider boundary data $M$ which are of the form $M=\begin{pmatrix} M' & 0 \\ 0 & -\frac{1}{2} \end{pmatrix}$. 
For the resulting two-dimensional displacement $\tilde{v}$ one is then lead to the following differential inclusion:
\begin{align}
\label{eq:diff_incl_2D}
e(\nabla \tilde{v} + M') \in K_{2D}:=\left\{E \in \R^{2\times 2}_{sym}: \ - \frac{1}{2} = \mu_1(E) < \mu_2(E) = 1; \ \tr(E) = \frac{1}{2}\right\} .
\end{align}
The (relaxed) condition for $M'$ turns into
\begin{align}
\label{eq:bound_con}
e(M') \in \inte(K^{qc}_{2D}):= \left\{E \in \R^{2\times 2}_{sym}: \ - \frac{1}{2} < \mu_1(E) \leq \mu_2(E)<1; \ \tr(E) = \frac{1}{2}\right\}.
\end{align}

We are now searching for a solution $\tilde{v}(x_1,x_2)$ satisfying \eqref{eq:diff_incl_2D}, \eqref{eq:bound_con} such that $\tilde{v}(x_1,x_2)=0$ on $\partial B_1$.
To this end, we note the following necessary and sufficient conditions:

\begin{lem}
\label{lem:lin_nec_suff}
Let $\tilde{v}(x_1,x_2)$ be a solution to 
\begin{align}
\label{eq:incl_lin_2d}
e(\nabla \tilde{v} + M') \in K_{2D} \mbox{ in } B_1, \ \tilde{v}=0 \mbox{ on } \partial B_1.
\end{align}
Then, a necessary and sufficient condition for \eqref{eq:incl_lin_2d} is that 
\begin{align}
\label{eq:Eikonal}
\left( \p_1 \tilde{v}_1 + e_{11}(M') - \frac{1}{4} \right)^2
+ \left( \frac{1}{2}(\p_1 \tilde{v}_2 + \p_2 \tilde{v}_1)+ e_{12}(M') \right)^2 = \frac{9}{16}.
\end{align}
\end{lem}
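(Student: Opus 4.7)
The statement is a pointwise algebraic characterisation of membership in $K_{2D}$; the boundary condition $\tilde v = 0$ on $\partial B_1$ plays no role in the equivalence and is simply carried through. The plan is to introduce the symmetric matrix
$$E := e(\nabla \tilde v + M') = e(\nabla \tilde v) + e(M'),$$
with components $a := \p_1 \tilde v_1 + e_{11}(M')$, $b := \tfrac{1}{2}(\p_1 \tilde v_2 + \p_2 \tilde v_1) + e_{12}(M')$, and $c := \p_2 \tilde v_2 + e_{22}(M')$, and to re-express the inclusion $E(x) \in K_{2D}$ in terms of the trace and determinant of $E(x)$, which are the natural $SO(2)$-invariants of a symmetric $2\times 2$ matrix.

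By the definition of $K_{2D}$ in \eqref{eq:diff_incl_2D}, a symmetric $2\times 2$ matrix lies in $K_{2D}$ if and only if its eigenvalues are $-\tfrac{1}{2}$ and $1$. Since the characteristic polynomial of a symmetric $2\times 2$ matrix is determined by its trace and determinant, this is equivalent to
$$a + c = \tfrac{1}{2} \quad \text{and} \quad ac - b^2 = -\tfrac{1}{2}.$$
Using the trace identity to eliminate $c = \tfrac{1}{2} - a$ in the determinant equation gives $a(\tfrac{1}{2}-a) - b^2 = -\tfrac{1}{2}$, which after rearranging is the circle equation $(a - \tfrac{1}{4})^2 + b^2 = \tfrac{9}{16}$. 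Unpacking the definitions of $a$ and $b$ produces exactly \eqref{eq:Eikonal}. The converse direction is the same computation run backwards: if \eqref{eq:Eikonal} holds together with the trace condition, one recovers $\det E = -\tfrac{1}{2}$ and therefore $E \in K_{2D}$.

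There is no real analytic obstacle here, since the argument is a routine two-by-two algebraic identity applied pointwise. The only subtlety worth flagging in the write-up is that the identity \eqref{eq:Eikonal} \emph{alone} does not characterise the inclusion: one must implicitly pair it with the trace condition
$$\di \tilde v + \tr(e(M')) = \tfrac{1}{2},$$
which is necessary because every element of $K_{2D}$ has trace $\tfrac{1}{2}$. With this bookkeeping in place the equivalence is straightforward, and the same reduction will later be useful because \eqref{eq:Eikonal} has the structure of an eikonal-type equation that can then be analysed via the method of characteristics.
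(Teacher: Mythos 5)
Your proof is correct and follows essentially the same approach as the paper: both reduce membership in $K_{2D}$ to the trace--determinant conditions $\tr(E)=\tfrac12$, $\det(E)=-\tfrac12$ for the symmetric $2\times 2$ matrix $E=e(\nabla\tilde v + M')$, and then use the trace relation to eliminate one entry and arrive at the circle equation \eqref{eq:Eikonal}. Your explicit caveat that \eqref{eq:Eikonal} must be paired with the trace condition $\di\tilde v + \tr(e(M'))=\tfrac12$ is apt; the paper encodes this in the problem set-up and records it in Remark \ref{rmk:Eik}, but it is worth flagging as you do.
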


\begin{rmk}
\label{rmk:Eik}
By using the trace constraints from \eqref{eq:bound_con} and \eqref{eq:diff_incl_2D}, we can rewrite
\begin{align}
\label{eq:trace}
e(M') = \begin{pmatrix} e_{11}(M') & e_{12}(M') \\ e_{12}(M')& -e_{11}(M') + \frac{1}{2} \end{pmatrix}, \ e(\nabla \tilde{v}) = \begin{pmatrix} \p_1 \tilde{v}_1 & \frac{1}{2}(\p_1 \tilde{v}_2 + \p_2 \tilde{v}_1)\\
\frac{1}{2}(\p_1 \tilde{v}_2 + \p_2 \tilde{v}_1) & \p_{2} \tilde{v}_2 \end{pmatrix},
\end{align}
with $\p_2 \tilde{v}_2 = - \p_1 \tilde{v}_1$.
The differential inclusion \eqref{eq:Eikonal} can then be written in a more symmetric form:
\begin{align}
\label{eq:Eik1}
\begin{split}
&\frac{1}{2}\left( \p_2 \tilde{v}_2 + e_{22}(M') - \frac{1}{4} \right)^2 +
\frac{1}{2}\left( \p_1 \tilde{v}_1 + e_{11}(M') - \frac{1}{4} \right)^2\\
& \quad + \left( \frac{1}{2}(\p_1 \tilde{v}_2 + \p_2 \tilde{v}_1)+ e_{12}(M') \right)^2 = \frac{9}{16}.
\end{split}
\end{align}
For $e_{11}(M')=\frac{1}{4}$, $e_{12}(M') = 0$ and $e_{22}(M') = \frac{1}{4}$ equation  \eqref{eq:Eik1} hence resembles a vectorial Eikonal type equation. 
\end{rmk}

\begin{rmk}
As a further observation, which might also be of interest in the context of the (quantitative) investigation of convex integration solutions, we point out that the setting of geometrically linear liquid crystal problems fits into the framework of \cite{RZZ17}. As a consequence, it is possible to deduce the existence of ``wild'' solutions with higher regularity. This is a consequence of the structure of the set $K^{qc}$ from \eqref{eq:Kqc} for which appropriate in-approximations and replacement constructions can be found similarly as in the $O(n)$ case. 
\end{rmk}

\begin{proof}
\emph{Necessity:} By definition of the set $K_{2D}$, for all matrices $\tilde{E} \in K_{2D}$ it holds that $\det(\tilde{E})= \frac{1}{2}$. Hence, a necessary condition for \eqref{eq:incl_lin_2d} is clearly given by the requirement that
\begin{align*}
\det(e(\nabla \tilde{v} + M')) = -\frac{1}{2}.
\end{align*}
With a few computations, it can be observed that this is equivalent to \eqref{eq:Eikonal}.\\

\emph{Sufficiency:}
A sufficient requirement for the validity of \eqref{eq:incl_lin_2d} is that 
\begin{align}
\label{eq:eigenval}
\det(e(\nabla \tilde{v}) + M' - \lambda Id) = 0
\end{align}
for $\lambda = 1$ and $\lambda =\frac{1}{2}$. Equation \eqref{eq:eigenval} can be rewritten as 
\begin{align*}
\lambda^2 - \frac{\lambda}{2} + \det(e(\nabla \tilde{v}) + M') = 0.
\end{align*}
Simplifying this expression for the choice $\lambda = 1$ and $\lambda =\frac{1}{2}$ then indeed also leads to \eqref{eq:Eikonal}.
\end{proof}

With Lemma \ref{lem:lin_nec_suff} in hand, we can relate the differential inclusion from \eqref{eq:diff_incl} to the nematic liquid crystal elastomer differential inclusions \eqref{eq:incl_lin_2d}, \eqref{eq:Eikonal}.
This allows us to ``explain'' the coincidence of the solution from Proposition \ref{prop:infty_lin} and the one found in \cite{ADMDS15}:

\begin{cor}
\label{prop:infty_liquid_cry}
Let $v$ be the solution from Proposition \ref{prop:infty_lin}. Then, $\frac{4}{3}v$ is a solution to \eqref{eq:Eik1} with 
\begin{align*}
e(M') = \begin{pmatrix} \frac{1}{4} & 0 \\ 0 & \frac{1}{4} \end{pmatrix}. 
\end{align*}
\end{cor}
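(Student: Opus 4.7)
The plan is to reduce the Eikonal-type identity \eqref{eq:Eik1} to a pointwise algebraic check on the explicit formula for $w$, via Lemma \ref{lem:lin_nec_suff} and Remark \ref{rmk:Eik}.

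First, I note that $e(M') = \tfrac{1}{4}\Id$ satisfies $\tr(e(M')) = \tfrac{1}{2}$. A short computation on $w(x)=2(1-\log r^2)(x_2,-x_1)$ shows that $\di w = 0$, hence also $\di(\tfrac{4}{3}w)=0$. This is precisely the constraint $\partial_2 \tilde{v}_2 = -\partial_1 \tilde{v}_1$ that Remark \ref{rmk:Eik} uses to pass between \eqref{eq:Eik1} and \eqref{eq:Eikonal}, and it makes the first two squared terms in \eqref{eq:Eik1} coincide. With the further values $e_{11}(M')=e_{22}(M')=\tfrac{1}{4}$ and $e_{12}(M')=0$, the statement is therefore equivalent to verifying
\begin{equation*}
(\partial_1 \tilde{v}_1)^2 + \bigl(\tfrac{1}{2}(\partial_1 \tilde{v}_2 + \partial_2 \tilde{v}_1)\bigr)^2 = \tfrac{9}{16}
\end{equation*}
for $\tilde{v} = \tfrac{4}{3} w$.

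Second, I differentiate $w$ directly using $\partial_j \log r^2 = 2x_j/r^2$. The one non-trivial observation is that although $w$ itself carries a logarithmic prefactor, the logarithms cancel on taking the symmetrised gradient, leaving the rational expressions $\partial_1 w_1 = -4 x_1 x_2/r^2$ and $\partial_1 w_2 + \partial_2 w_1 = 4(x_1^2 - x_2^2)/r^2$ (with $\partial_2 w_2 = -\partial_1 w_1$, reconfirming $\di w = 0$).

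Third, substituting $\tilde{v}= c w$ into the reduced Eikonal identity and applying the algebraic factorisation $(x_1^2+x_2^2)^2 = 4 x_1^2 x_2^2 + (x_1^2 - x_2^2)^2$ collapses the left-hand side to a constant multiple of $c^2$. Matching this against the right-hand side $\tfrac{9}{16}$ fixes the scale. Conceptually, since $e(\nabla w)$ has constant eigenvalues of equal modulus and opposite sign, the affine shift $e(\nabla(cw)) + \tfrac{1}{4}\Id$ lands in $K_{2D}$ for exactly one value of $c$, and the corollary is simply the identification of that scale. There is no genuine obstacle beyond careful bookkeeping of the constants when converting between the $K_\infty$ and $K_{2D}$ normalisations of the wells.
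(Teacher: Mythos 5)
Your reduction via Lemma \ref{lem:lin_nec_suff} and Remark \ref{rmk:Eik}, and your derivative computations $\partial_1 w_1=-4x_1x_2/r^2$, $\partial_1 w_2+\partial_2 w_1=4(x_1^2-x_2^2)/r^2$ with the logarithms cancelling, are all correct, and your route is in spirit the same as the paper's: the paper's proof just compares the form of $K_\infty$ from Lemma \ref{lem:diff_incl_n_infty} with \eqref{eq:Eikonal} and notes that for $e(M')=\frac14\Id$ the two inclusions differ by a multiplicative constant; your direct differentiation of $w$ re-derives exactly that fact.

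The genuine gap is that you never carry out the final ``matching of the scale'', and that is precisely where the stated claim lives. With your own formulas, for $\tilde v=cw$ the reduced identity reads
\begin{align*}
(\partial_1\tilde v_1)^2+\Bigl(\tfrac12(\partial_1\tilde v_2+\partial_2\tilde v_1)\Bigr)^2
= c^2\,\frac{16x_1^2x_2^2+4(x_1^2-x_2^2)^2}{r^4}=4c^2,
\end{align*}
so the requirement $4c^2=\frac9{16}$ forces $c=\frac38$, not $c=\frac43$: the trace-free matrix $e(\nabla w)$ has eigenvalues $\pm2$, whereas \eqref{eq:Eik1} with $e(M')=\frac14\Id$ demands eigenvalues $\pm\frac34$. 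Hence your argument, completed, establishes only that \emph{some} multiple of $w$ solves \eqref{eq:Eik1}, and it actually contradicts the specific factor $\frac43$ asserted in the statement. A complete blind proof must either verify that factor or flag the discrepancy explicitly; it is most likely a normalisation slip in the paper (the description of $K_\infty$ in Lemma \ref{lem:diff_incl_n_infty} retains the $n$-dependent constant $\frac{1}{\sin(\pi/n)}$ in a purported $n\to\infty$ limit, and the prefactor $2(1-\log r^2)$ in Proposition \ref{prop:infty_lin} is not consistent with eigenvalues $\pm1$ either), and the paper's own one-line proof glosses over the same bookkeeping. But since the corollary asserts a concrete constant, dismissing this as ``no genuine obstacle beyond careful bookkeeping'' without doing the bookkeeping leaves the statement unproved.
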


\begin{proof}
The result follows directly by comparing the form of $K_{\infty}$ from Lemma \ref{lem:diff_incl_n_infty} and \eqref{eq:Eikonal}. For the chosen value of $e(M')$ the differential inclusions only differ by a multiplicative constant.
\end{proof}

\section{Remarks on three-dimensional constructions}
\label{sec:3D}

In this section we discuss adaptations of the two-dimensional constructions of
Section \ref{sec:nonlinear_n_well} to the case of two nested regular tetrahedra
$T_1,T_2\subset \R^3$.
Here, it turns out that while it is possible to construct families of
volume-preserving piecewise affine transformations, there are no non-trivial constructions which exhibit an $m$-well structure
\begin{align}
  \label{eq:mwell3D}
  \nabla u \in SO(3)\cup \bigcup_{P \in \mathcal{P}_n} SO(3)P U P^T; \ \det(U)=1,
\end{align}
where $\nabla u \in SO(3)$ corresponds to an austenite configuration and $\mathcal{P}_n \subset SO(3)$ denotes a suitable symmetry group.

After possibly rescaling and rotating $u$, we may assume that $T_1$ is given by
the convex hull of the four points 
\begin{align}
  \label{eq:innerchoice}
  \begin{pmatrix}
    1 \\ 1 \\ 1
  \end{pmatrix},
  \begin{pmatrix}
    1 \\ -1 \\ -1
  \end{pmatrix},
  \begin{pmatrix}
    -1 \\ 1 \\-1
  \end{pmatrix},
  \begin{pmatrix}
    -1 \\ -1 \\ 1
  \end{pmatrix}.
\end{align}
With this choice of coordinates, the barycenter of $T_1$ is in $(0,0,0)$
and two distinguished axes of rotation are given by the $x_3$ axis
\begin{align}
  \label{eq:axis1}
  \R \begin{pmatrix}
    0\\0\\1
  \end{pmatrix}
\end{align}
and 
\begin{align}
  \label{eq:axis2}
\mathbb{R}
  \begin{pmatrix}
    1\\1\\1
  \end{pmatrix}
\end{align}
Furthermore, the dual tetrahedron to $T_1$ is up to
rescaling given by $-T_1$.

In the following we consider two symmetric constructions, where the inner
tetrahedron $T_2$ has the same barycenter and shares an axis of symmetry with
$T_1$.
The deformation $u$ is then obtained by rotating $T_2$ around this axis and linearly interpolating on the polyhedra spanned
by vertices, edges and surfaces of $T_1$ and $T_2$. By our choice of coordinates
we may assume that the distinguished axis is either given by \eqref{eq:axis1}
which is illustrated in Figures \ref{fig:sym} and \ref{fig:sym_inter}, or by
\eqref{eq:axis2} which is illustrated in Figures \ref{fig:inner} and \ref{fig:flip3D}.

In particular, since $u$ is required to be volume-preserving as $K \subset \{M:
\text{det}(M)=1\}$, it follows that $u$ needs to preserve the distance of the
vertices of $T_2$ to the corresponding surfaces of $T_1$.
Computations show that there is no non-trivial choice of $T_2$ and $R T_2$ such that
this distance is the distance for all four corners of $T_2$.
Hence, we relax this constraint to consider the case where $T_2$ is chosen to be
a rescaled dual copy of $T_1$, which is initially rotated around either $\R  \begin{pmatrix}
    1\\1\\1
  \end{pmatrix}$ or the $x_3$-axis. These configurations are depicted in Figure \ref{fig:sym} and Figure
\ref{fig:inner}, respectively.

\begin{figure}[h]
  \centering
  \includegraphics[width=0.5\linewidth,page=23]{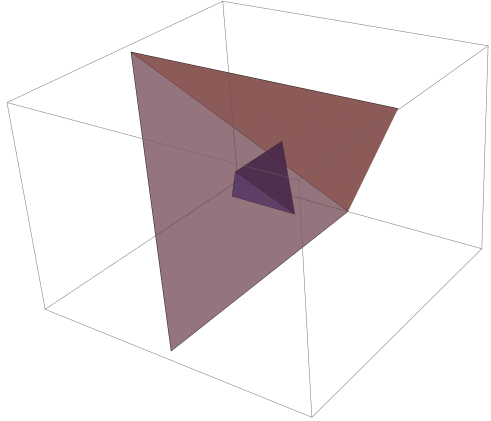}
  \caption{The inner tetrahedron $T_2$ shares the $x_3$-axis as a common symmetry
    axis with $T_1$. Upon rotating $T_2$ around this axis, a piecewise affine
    transformation $u$ is obtained by interpolating on the various polyhedra
    shown in Figure \ref{fig:sym_inter}.}
  \label{fig:sym}
\end{figure}

\begin{figure}[h]
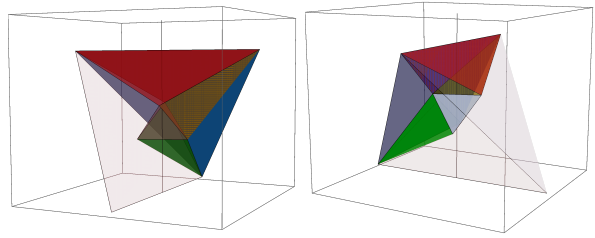

  \centering
  \includegraphics[width=0.3\linewidth,page=24]{figures}
  \includegraphics[width=0.3\linewidth,page=25]{figures}
  \caption{The inner and outer tetrahedron both share the $x_3$ axis as a
    symmetry axis, viewed here from two different rotated points of view.
    We consider a map $u$, which is affine in the
    interior tetrahedron, given by the identity outside the outer tetrahedron
    and given by affine interpolations in the remaining regions composed of
    (irregular) tetrahedra.
    Up to symmetry, there are $5$ distinct interpolation regions, which are
    colored in this picture.}
  \label{fig:sym_inter}
\end{figure}

\begin{figure}[h]
  \centering
  \includegraphics[width=0.5\linewidth,page=26]{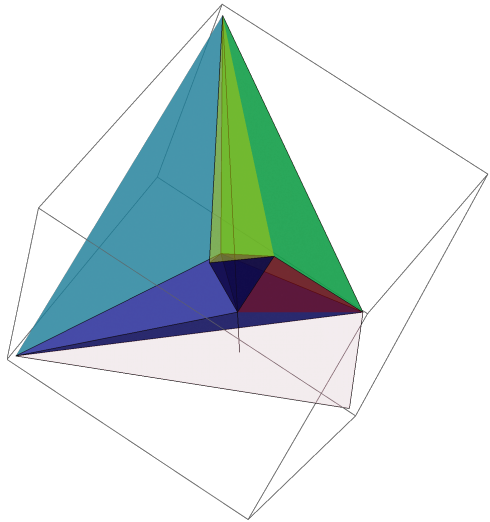}
  \caption{The tetrahedra share a common symmetry axis through their barycenter
    and one of the corners. We consider a map $u$, which is given by a rotation
    in the interior tetrahedron, by the identity outside the outer tetrahedron
    and given by affine interpolations in the remaining regions composed of
    (irregular) tetrahedra.
    Up to symmetry, there are $5$ distinct interpolation regions, which are
    colored in this picture.}
  \label{fig:inner}
\end{figure}

\begin{figure}[h]
  \centering
  \includegraphics[width=0.5\linewidth,page=27]{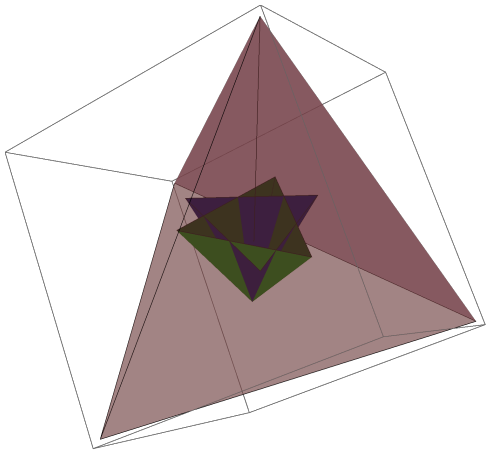}
  \caption{Given an initial configuration depicted in blue, the green ``flipped''
    configuration is the only rotation around the symmetry axis that preserves
    the volume of the polyhedra spanned by an inner corner and an outer surface.}
  \label{fig:flip3D}
\end{figure}

\subsection{Rotations around the $x_3$-axis}
\label{sec:x3}

We first consider the setting depicted in Figures \ref{fig:sym} and
\ref{fig:sym_inter}.
We in particular note that the cyan interpolation region (for colours we refer to the online version of the article) is obtained by
interpolating between a surface $S$ of $T_1$ and a vertex $v$ of $T_2$.
The volume-preservation constraint $\det(\nabla u)=1$ imposed by the $m$-well
condition \eqref{eq:mwell3D} thus implies that the map $u$ needs to preserve the
distance between the surface $S=u(S)$ and $u(v)$.
Similarly to the two-dimensional setting (c.f. Figure
\ref{fig:2} and the preceding remarks) this implies that if initially
\begin{align}
  \label{eq:initialx3}
  T_2=r R_{-\theta} (-T_1),
\end{align}
then necessarily
\begin{align}
 u(T_2)= r R_{+\theta} (-T_1),
\end{align}
where $r \in (1, \frac{1}{3})$ is a scaling factor, $R_{\theta}$ is the rotation around the $x_3$ axis with angle $\theta$
and we recall that, up to scaling, $-T_1$ is the dual tetrahedron to $T_1$.
Thus $u$ acts on $T_2$ by a rotation by $2\theta$ and we say that the
tetrahedron is ``flipped'' from being rotated by an angle $-\theta$ to being
rotated by an angle $\theta$.
With this choice, for any $r>0$ and any $\theta>0$, it follows that $u$ is a
volume preserving affine transformation in each of the regions highlighted in
Figure \ref{fig:sym_inter}.
However, while volume-preservation is a necessary condition for the $m$-well
problem \eqref{eq:mwell3D}, this is not sufficient.
We may explicitly compute that in the red interpolation region $\nabla u$ is
given by the shear
\begin{align}
  U_1(\theta)=
  \begin{pmatrix}
    1 & 0 & 0 \\
    2\tan(\theta) & 0 & 0 \\
    0 & 0 &1
  \end{pmatrix},
\end{align}
and in particular is independent of $r>0$.
Since none of the interpolated transformations are given by rotations, we thus
ask whether there exist suitable choices of $\theta, r$ such that  
\begin{align}
  \label{eq:mwellx3}
  \nabla u \in \bigcup_{P \in \mathcal{P}_n} SO(3)P U_1 P^T
\end{align}
in the remaining regions for a suitable choice of a symmetry group
$\mathcal{P}_n$.
A necessary condition for this requirement is that in all interpolation regions
the singular values of $\nabla u$ agree with the singular values of $U_1$.
An explicit numerical computation yields that the singular values are given by
$(\lambda,1,\frac{1}{\lambda})$, where $\lambda$ depends on the angle
$\theta$ and the scaling factor $r$ chosen in \eqref{eq:initialx3} and the
interpolation region.

Figure \ref{fig:singularvaluesx3} shows plots of $\lambda$ in the various
regions and was obtained by direct numerical calculations.

\begin{figure}[h]
  \centering
   \includegraphics[width=0.5\linewidth,page=28]{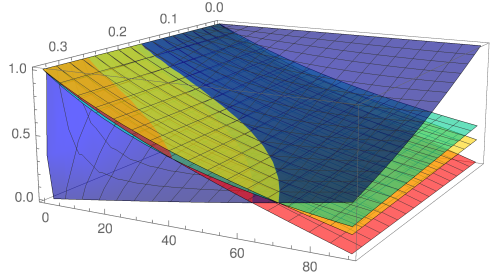}
   \caption{Singular values for the construction of Figure \ref{fig:sym_inter}.
     We numerically compute the smallest singular value of the transformation $u$ in
     various regions as functions of the angle $\theta \in (0,\frac{\pi}{2})$ and scaling
     factor $r \in (0,\frac{1}{3})$
     chosen in \eqref{eq:initialx3}. 
  }
  \label{fig:singularvaluesx3}
\end{figure}

In particular, we observe that there are no non-trivial choices of $r,\theta$
such that the singular values $\lambda$ agree in all regions. The necessary
condition for the $m$-well inclusion \eqref{eq:mwellx3} is thus never satisfied.

We remark that key obstacles of this three-dimensional construction are given by
the non-commutative structure of $SO(3)$ and the requirement to choose an axis
for the rotation of $T_2$. While in two dimensions all rotations commute and all
interpolation regions are given by triangles, in the present setting the
interpolations in the various regions instead behave qualitatively differently
and are for instance not anymore given by shears.  

\subsection{Rotations around an axis through a vertex}
\label{sec:vertex}

In this subsection we consider the construction depicted in Figure
\ref{fig:inner}, where
\begin{align}
  \label{eq:initial2}
  T_2=r R_{-\theta}^* (-T_1),
\end{align}
is instead rotated around the axis \eqref{eq:axis2} through the origin and one of
the corners of $T_1$.
As in the two-dimensional case, the determinant constraint and the resulting
volume preservation implies that an inner tetrahedron initially rotated by an angle
$-\theta$ compared to the dual tetrahedron of $T_1$ can only be ``flipped'' to an
angle $\theta$ (see Figure
\ref{fig:flip3D} for an illustration):
\begin{align}
  u(T_2)= r R_{-\theta}^* (-T_1).
\end{align}
We thus consider the mapping $u:\R^3\rightarrow \R^3$ which acts as the identity
outside the outer tetrahedron $T_1$, as a rotation by $2 \theta$ around the
symmetry axis inside $T_2$, and is given by the affine interpolation in any of the
(irregular) tetrahedra of the types depicted in Figure \ref{fig:inner}.
With this choice of construction the transformation $u$ is volume-preserving on
all interpolation regions for all choices of $r \in(0,\frac{1}{3})$ and $\theta
\in (-\frac{\pi}{2}, \frac{\pi}{2})$.
As a main difference to the construction of Section \ref{sec:x3} for the
$x_3$-axis case, we observe that under this transformation, the tetrahedron
obtained by interpolating between a surface of $T_2$ and $(1,1,1)$ (colored
yellow in Figure \ref{fig:inner}) is transformed by a rigid rotation and thus corresponds to austenite.
Furthermore, the region obtained by interpolating between a surface of $T_1$ and
the vertex $-r(1,1,1) \in T_2$ remains invariant under $u$ and thus also
corresponds to austenite.
For the remaining regions, we thus ask whether there is a choice of parameters
$r \in (0,\frac{1}{3}), \theta \in (-\frac{\pi}{2},\frac{\pi}{2})$ and a
suitable matrix $U$ and group $\mathcal{P}_n \subset SO(3)$ such that
\begin{align}
  \label{eq:mwell3D2}
  \nabla u \in \bigcup_{P \in \mathcal{P}_n} SO(3)P U P^T.
\end{align}
As in the setting of Subsection \ref{sec:x3} the singular values in these
regions are given by $\lambda, 1, \frac{1}{\lambda}$.

A plot of $\lambda$ in the various regions obtained by direct numerical
computation is given in Figure \ref{fig:singularvalues2}.
In particular, we again observe that there are no non-trivial choices of $r,\theta$
such that the singular values $\lambda$ agree in all regions. The necessary
condition for the $m$-well inclusion \eqref{eq:mwell3D2} is thus never satisfied.

\begin{figure}[h]
  \centering
   \includegraphics[width=0.5\linewidth,page=29]{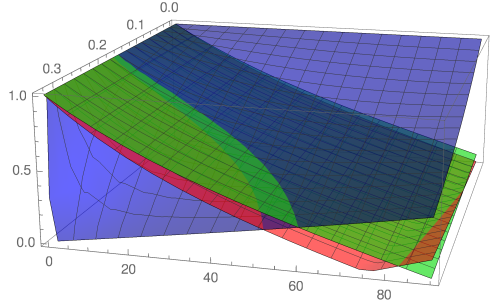}
   \caption{Singular values for the construction of Figure \ref{fig:inner}.
     We numerically compute the smallest singular value of the transformation $u$ in
     various regions as functions of the angle $\theta \in (0,\frac{\pi}{2})$ and scaling
     factor $r \in (0,\frac{1}{3})$
     chosen in \eqref{eq:initial2}.
  }
  \label{fig:singularvalues2}
\end{figure}

\appendix

\section{Necessary relation between the radius of the outer polygon and the radius of the inner polygon: the solutions to \eqref{LongEq}}
\label{LongEqSolve}
In this first part of the appendix, we provide the remainder of the argument from Proposition \ref{lem:nec}.

To this end, we solve 
\beq
\label{LongEq2}
\begin{split}
\Bigl(1+x^2-2x\cos( \frac{2\pi}{n}\alpha)\Bigr)\Bigl(1+x^2-2x\cos\Bigl(\frac{2\pi}{n}(1-\alpha) \Bigr)\Bigr)\cos\Bigl(\frac{(n-2)\pi}{2n }\Bigr)\\ = \Bigl(1+ x^2\cos\bigl(\frac{2\pi}{n}\bigr)-2x\cos\bigl(\frac\pi n\bigr)\cos\bigl(\frac{\pi}n(1-2\alpha)\bigr)\Bigr)^2
\end{split}
\eeq
which is \eqref{LongEq} squared. We get the following four solutions of the equation \eqref{LongEq} for $x$:
\begin{itemize}
\item $x=\frac{1}{\cos\frac{\pi}{n}}\Bigl(\cos\Bigl(\frac {\rho_n}2\Bigr) - \sqrt{\sin(\frac{2\pi}{n}\alpha)\sin\Bigl( \frac{2\pi}{n}(1-\alpha)\Bigr) } \Bigr)$,
\item $x=\frac{1}{\cos\frac{\pi}{n}}\Bigl(\cos\Bigl(\frac{\rho_n} 2\Bigr) + \sqrt{\sin(\frac{2\pi}{n}\alpha)\sin\Bigl( \frac{2\pi}{n}(1-\alpha)\Bigr) } \Bigr)$,
\item $x=\frac{1}{\cos\bigl(\frac{3\pi}{n}\bigr)}\Bigl(\cos\Bigl(\frac{2\pi}{n}\Bigr)\cos\Bigl(\frac{\rho_n} 2 \Bigr) -  \sqrt{\cos^2\Bigl(\frac{2\pi}{n}\Bigr)\cos^2\Bigl(\frac{\rho_n} 2\Bigr) - \cos\Bigl(\frac{3\pi}{n}\Bigr)\cos\Bigl(\frac{\pi}{n}\Bigr)} \Bigr)$,
\item $x=\frac{1}{\cos\bigl(\frac{3\pi}{n}\bigr)}\Bigl(\cos\Bigl(\frac{2\pi}{n}\Bigr)\cos\Bigl(\frac{\rho_n} 2\Bigr) +  \sqrt{\cos^2\Bigl(\frac{2\pi}{n}\Bigr)\cos^2\Bigl(\frac{\rho_n} 2\Bigr) - \cos\Bigl(\frac{3\pi}{n}\Bigr)\cos\Bigl(\frac{\pi}{n}\Bigr)} \Bigr)$,
\end{itemize}
where as in \ref{H4}, ${\rho_n}:=\frac{2\pi}{n}(1-2\alpha)$.
We now claim that just the first solution is admissible for us. Here and below we define a solution $x$ of \eqref{LongEq2} admissible if $x\in(0,1)$ and it satisfies \eqref{LongEq}. In order to prove our claim, we can assume without loss of generality that 
$$\sqrt{4\cos^2\Bigl(\frac{2\pi}{n}\Bigr)\cos^2\Bigl(\frac{\rho_n}2\Bigr) - 4\cos\Bigl(\frac{3\pi}{n}\Bigr)\cos\Bigl(\frac{\pi}{n}\Bigr)}
$$ 
is real, otherwise the third and fourth solutions are not admissible. The proof of the claim is as follows: 
\begin{itemize}
\item Second solution: We estimate
\begin{align*}
x \geq \frac{\cos\left( \frac{{\rho_n}}{2} \right)}{\cos\left( \frac{\pi}{n} \right)}.
\end{align*}
Since $\alpha\in \left( 0,1\right)$ it is clear that the second solution is such that $x\geq 1$ for any $\alpha\in(0,1)$, any $n\geq 3$.
\item Third solution: $x\geq 1$ if $n=3,4$ and $\alpha\in[0,1].$ We can hence restrict to the case $n>4$. We now claim that 
\beq
\label{IsNeg}
1+ x^2\cos\bigl(\frac{2\pi}{n}\bigr)-2x\cos\bigl(\frac\pi n\bigr)\cos\bigl(\frac{\rho_n}2\bigr)<0
\eeq
for any $\alpha\in(0,1)$, and any $n\geq 4$. Since the left-hand side of \eqref{LongEq} is always non-negative, the claim would imply that the third solution of \eqref{LongEq2} does not satisfy \eqref{LongEq}, and is hence not admissible. We plot $1+ x^2\cos\bigl(\frac{2\pi}{n}\bigr)-2x\cos\bigl(\frac\pi n\bigr)\cos\bigl(\frac{{\rho_n}}2\bigr)$ for $n\in\{5,\dots,50\}$ in Figure \ref{fig:VerificaIpotesi}. For large $n$, we have that 
$$x = 1-\frac{2\pi}{n}\sqrt{(\alpha-\alpha^2)}  +\frac{2\pi^2}{n^2}(-\alpha^2+\alpha +1) + O(n^{-3}),
$$
and, therefore,
$$
1+ x^2\cos\bigl(\frac{2\pi}{n}\bigr)-2x\cos\bigl(\frac\pi n\bigr)\cos\bigl(\frac{\rho_n}2\bigr) = -\frac{4\pi^3}{n^3}\sqrt{\alpha(1-\alpha)} + O(n^{-4}) < 0
$$
for any $\alpha\in(0,1),$ and for any $n$ large enough.
\begin{figure}[h]
  \centering
   \includegraphics[width=0.5\linewidth,page=30]{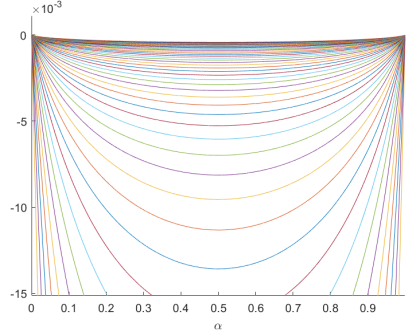}
   \caption{Numerical verification of the fact that, for $n\in\{1,\dots,50\}$ and $\alpha\in(0,1)$ we have \eqref{IsNeg}. The bigger $n$ is, the closer to zero the convex curves in the pictures are.}
  \label{fig:VerificaIpotesi}
\end{figure}
\item Fourth solution: It is easy to see that it is negative for any $\alpha\in[0,1]$ when $n=3,4,5$. Indeed, $\cos\frac{3\pi}{n}<0$. If $n=6$ we get $x=\infty,$ while for $n>6$ we have $x>1$. Indeed, in this case,
$$
x\geq \frac{2\cos\Bigl(\frac{2\pi}{n}\Bigr)\cos\Bigl(\frac{\rho_n}2\Bigr)}{2\cos\bigl(\frac{3\pi}{n}\bigr)}\geq \frac{\cos\Bigl(\frac{2\pi}{n}\Bigr)\cos\Bigl(\frac{\pi}{n}\Bigr)}{\cos\bigl(\frac{3\pi}{n}\bigr)}= \frac12\Bigl(1+\frac{1}{2\cos\bigl(\frac{2\pi}{n}\bigr)-1} \Bigr)>1.
$$
Therefore, for any $\alpha\in[0,1]$ and any $n\geq3$ the fourth solution is not admissible.
\end{itemize}

\section{Proof of Corollary \ref{cor:iterate}}
\label{AppCode}
In this part of the appendix we show that equation \eqref{eq:corollary26}
\begin{align*}
P_0 U P_0 = Q_{\alpha} U Q_{1-\alpha}^T
\end{align*}
is satisfied.
In order to simplify calculations, we express all matrices with respect to the
basis $(e_{11}, e_{11}^\perp)$ and thus have to show that
\begin{align}
  \begin{pmatrix}
    a  & - \frac{a^{-1}-a}{\tan(\phi)} \\ 0 & a^{-1}
  \end{pmatrix} Q_{1-\alpha} = Q_{\alpha}   \begin{pmatrix}
    a  & \frac{a^{-1}-a}{\tan(\phi)} \\ 0 & a^{-1}
  \end{pmatrix}.
\end{align}
We further recall that
\begin{align*}
  a^2 &= \frac{\sin(\frac{2\pi}{n}(1-\alpha))}{\sin(\frac{2\pi}{n}\alpha)}, \
  \frac{1}{\tan(\phi)}= \tan\left(\frac{\pi}{n}\right).
\end{align*}
In particular, since $\alpha \in (0,1)$, we may multiply the claimed equation
with $a \sin(\frac{2\pi}{n}\alpha) \neq 0 $ and for simplicity of notation introduce
$t:=\frac{2\pi}{n}\alpha $ and $s=
\frac{2\pi}{n}(1-\alpha) = \frac{2\pi}{n}-t$.
With this notation, we have to show that 
\begin{align*}
              & \quad \begin{pmatrix}
                \sin(s) & -(\sin(t)-\sin(s))\tan(\frac{\pi}{n}) \\ 0 & \sin(t)
              \end{pmatrix}
                \begin{pmatrix}
    \cos(s) & -\sin(s) \\
    \sin(s) & \cos(s)
  \end{pmatrix} \\
 & = \begin{pmatrix}
    \cos(t) & -\sin(t) \\
    \sin(t) & \cos(t)
  \end{pmatrix}
              \begin{pmatrix}
                \sin(s) & (\sin(t)-\sin(s))\tan(\frac{\pi}{n}) \\ 0 & \sin(t)
              \end{pmatrix}.
\end{align*}
We consider each matrix entry separately.
The claimed equality for the upper left entry is given by
\begin{align}
\label{eq:trig_ident}
\sin(s)\cos(s)- \sin(s)\tan\left(\frac{\pi}{n}\right) (\sin(t)-\sin(s))=\cos(t)\sin(s).
\end{align}
In order to show this, we may factor out $\sin(s)$ and use the angle addition formulas:
\begin{align*}
  \cos(s)&=\cos(t)\cos\left(\frac{2\pi}{n}\right)+ \sin(t)\sin\left(\frac{2\pi}{n}\right), \\
  \sin(s)&=\cos(t)\sin\left(\frac{2\pi}{n}\right)- \sin(t)\cos\left(\frac{2\pi}{n}\right).
\end{align*}
We then collect terms involving $\cos(t)$ and $\sin(t)$ as
\begin{align*}
  &\quad \cos(t)\cos\left(\frac{2\pi}{n}\right)+ \sin(t)\sin\left(\frac{2\pi}{n}\right)\\
  & \quad -\tan\left(\frac{\pi}{n}\right)
  \left(\sin(t)-\cos(t)\sin\left(\frac{2\pi}{n}\right)+ \sin(t)\cos\left(\frac{2\pi}{n}\right)\right) \\
  &= \cos(t)\left(\cos\left(\frac{2\pi}{n}\right)+\tan\left(\frac{\pi}{n}\right)\sin\left(\frac{2\pi}{n}\right)\right)\\
  & \quad
  + \sin(t)\left(\sin\left(\frac{2\pi}{n}\right)-\tan\left(\frac{\pi}{n}\right)(1+\cos\left(\frac{2\pi}{n}\right)\right)\\
  &= \cos(t) 1+ \sin(t) 0 = \cos(t),
\end{align*}
where we used the half angle identities for $\cos(2x)$ and $\sin(2x)$ in the
last equality.

The calculation for the bottom right-entry is analogous with the role of $s$ and
$t$ and the sign of $(\sin(t)-\sin(s))\tan(\frac{\pi}{n})$) interchanged.
The bottom left equality $\sin(t)\sin(s)=\sin(t)\sin(s)$ is always satisfied.
It thus only remains to verify equality of the upper right entry, which can be
simplified to read
\begin{align*}
  &-\sin^2(s)-\cos(s)(\sin(t)-\sin(s))\tan\left(\frac{\pi}{n}\right)\\
  & \quad  =
  -\sin^2(t)+ \cos(t)(\sin(t)-\sin(s))\tan\left(\frac{\pi}{n}\right) \\
  &\Leftrightarrow
  \sin^2(t)-\sin^2(s) - (\cos(t)+\cos(s))(\sin(t)-\sin(s))\tan\left(\frac{\pi}{n}\right)=0.
  \end{align*}
  Factoring out the factor $(\sin(t)-\sin(s))$, it suffices to prove
  \begin{align*}
\sin(t)+\sin(s) - (\cos(t)+\cos(s))\tan\left(\frac{\pi}{n}\right)=0.
\end{align*}
As above, the claimed equality
then again follows by using angle addition formulas.

\section{Reduction to Cauchy-Green Tensors used in the Proof of Proposition \ref{prop:non_iterable}}

Last but not least, we provide the argument (used in the proof of Proposition \ref{prop:non_iterable}) that it is possible to reduce the differential inclusion \eqref{eq:quest} to an inclusion for the associated Cauchy-Green tensors.

    \begin{lem}\label{lem:reduction}
    Suppose that $\det(M)=\det(U)>0$, then the inclusion
    \begin{align}
      \label{eq:inclusion}
      M \in \bigcup_{P \in \mathcal{P}_n}SO(2) P^T U P
    \end{align}
    is satisfied, if and only if
    \begin{align}
      \label{eq:inclusion2}
      M^TM \in \bigcup_{P \in \mathcal{P}_n} P^T U^T U P.
    \end{align}
  \end{lem}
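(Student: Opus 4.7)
The plan is to prove both implications by combining the uniqueness of the polar decomposition with the fact that $\mathcal{P}_n \subset O(2)$, so every $P$ satisfies $PP^T = I$ and $\det(P)^2 = 1$.

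The forward direction is a direct computation. Assuming $M = R P^T U P$ for some $R \in SO(2)$ and $P \in \mathcal{P}_n$, I would just expand
\begin{align*}
M^T M = P^T U^T P R^T R P^T U P = P^T U^T (P P^T) U P = P^T U^T U P,
\end{align*}
using $R \in SO(2)$ and $P \in O(2)$. This gives \eqref{eq:inclusion2}.

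For the reverse direction, I would first record two auxiliary facts: (a) for each $P \in \mathcal{P}_n$, $\det(P^T U P) = \det(P)^2 \det(U) = \det(U) > 0$, so both $M$ and $P^T U P$ have strictly positive determinant, hence are invertible; (b) since $M^T M$ is symmetric positive definite, its symmetric positive-definite square root $S := \sqrt{M^T M}$ is uniquely determined. Assuming \eqref{eq:inclusion2} holds, i.e., $M^T M = P^T U^T U P = (P^T U P)^T (P^T U P)$ for some $P \in \mathcal{P}_n$, the uniqueness of the symmetric positive-definite square root forces $|M| = S = |P^T U P|$. The key obstacle — and the only mildly subtle point — is making sure this uniqueness applies to $P^T U P$ as well, which is exactly where (a) enters, since the polar decomposition $P^T U P = R' |P^T U P|$ has $R' \in SO(2)$ (rather than merely $O(2)$) precisely because $\det(P^T U P) > 0$.

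Combining these observations, write the polar decompositions $M = R_M S$ and $P^T U P = R' S$ with $R_M, R' \in SO(2)$. Then $M = R_M S = (R_M R'^T) R' S = R (P^T U P)$ with $R := R_M R'^T \in SO(2)$, which is exactly \eqref{eq:inclusion}. This completes the proof of the equivalence.
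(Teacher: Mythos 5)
Your proof is correct and takes essentially the same approach as the paper: both reduce the converse direction to the polar decomposition, with the positivity hypothesis $\det(M)=\det(U)>0$ serving precisely to ensure the orthogonal polar factors lie in $SO(2)$ rather than merely $O(2)$. The paper constructs the polar factors explicitly via the spectral decomposition of $M^T M$, whereas you invoke uniqueness of the symmetric positive-definite square root directly; this is a cosmetic difference, not a different route.
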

  This characterisation follows from basic properties of the singular value
  decomposition.
  \begin{proof}
    We observe that \eqref{eq:inclusion} implies \eqref{eq:inclusion2}. Thus, we only 
    consider the converse and assume that
    \begin{align*}
      M^TM = (P^TU^TP) (P^T U P)=: M_1^T M_1.
    \end{align*}
    for some $P \in \mathcal{P}_n$.
    Since $M^TM$ is symmetric, there exists $Q \in SO(2)$ and a diagonal matrix
    $\diag(\lambda_1,\lambda_2)$, with $\lambda_1 \lambda_2=\det(M)^2\neq 0$, $\lambda_1,\lambda_2>0$,
    such that
    \begin{align*}
      M^T M= Q^T \diag(\lambda_1,\lambda_2)Q.
    \end{align*}
    It follows that
    \begin{align*}
      \tilde{M}:= M Q^T \diag(\frac{1}{\sqrt{\lambda_1}}, \frac{1}{\sqrt{\lambda_2}}), \\
      \tilde{M_1}:= M_1 Q^T \diag(\frac{1}{\sqrt{\lambda_1}}, \frac{1}{\sqrt{\lambda_2}}), 
    \end{align*}
    satisfy
    \begin{align*}
      \tilde{M}^T\tilde{M}= I= \tilde{M_1}^T\tilde{M_1}
    \end{align*}
    and thus $\tilde{M}, \tilde{M_1} \in SO(2)$. Here we used that
    $\det(M)=\det(U)>0$.
    In particular,
    \begin{align*}
      M_1&= \tilde{M_1} Q^T\diag(\sqrt{\lambda_1}, \sqrt{\lambda_2}), \\
      M& = \tilde{M} Q^T\diag(\sqrt{\lambda_1}, \sqrt{\lambda_2}) \\
      &= \tilde{M} \tilde{M}^T_1 M_1,
    \end{align*}
    where $\tilde{M}\tilde{M}^T_1 \in SO(2)$, which implies the result.
  \end{proof}

  \section*{Acknowledgements}
P.C. is supported by JSPS  Grant-in-Aid for Young Scientists (B) 16K21213
and partially by JSPS Innovative Area Grant 19H05131. P.C. holds an honorary appointment at La Trobe University and is a member of GNAMPA. C.Z. acknowledges a travel grant from the Simon's foundation. B.Z. would like to thank Sergio Conti for helpful discussions, and acknowledges support by the Berliner Chancengleichheitsprogramm and by the Deutsche Forschungsgemeinschaft through SFB 1060 “The Mathematics of Emergent Effects”. 

\bibliographystyle{alpha}
\bibliography{citations4}

\end{document}